\newcommand{\nc}{\newcommand}
\nc{\ad}{\operatorname{ad}}
\nc{\ev}{\operatorname{ev}}
\nc{\tr}{\operatorname{tr}}
\nc{\Gr}{\mathscr{K}}
\nc{\rGr}{\operatorname{rGr}}
\nc{\atyp}{\operatorname{atyp}}
\nc{\tp}{\operatorname{top}}
\nc{\rank}{\operatorname{rank}}
\nc{\corank}{\operatorname{corank}}
\nc{\codim}{\operatorname{codim}}
\nc{\sdim}{\operatorname{sdim}}
\nc{\mult}{\operatorname{mult}}
\nc{\ds}{\operatorname{ds}}
\nc{\tail}{\operatorname{tail}}
\nc{\howl}{\operatorname{howl}}
\nc{\triv}{\operatorname{triv}}
\nc{\spn}{\operatorname{span}}
\nc{\Sym}{\operatorname{Sym}}
\nc{\Core}{\operatorname{Core}}
\nc{\id}{\operatorname{id}}
\nc{\Id}{\operatorname{Id}}
\nc{\Ree}{\operatorname{Re}}
\nc{\hi}{\operatorname{hi}}
\nc{\htt}{\operatorname{ht}}
\nc{\at}{\operatorname{at}}
\nc{\str}{\operatorname{str}}
\nc{\Iso}{\operatorname{Iso}}
\nc{\Ker}{\operatorname{Ker}}
\nc{\rker}{\operatorname{rKer}}
\nc{\im}{\operatorname{Im}}
\nc{\osp}{\mathfrak{osp}}
\nc{\sgn}{\operatorname{sgn}}
\nc{\F}{\operatorname{F}}
\nc{\Mod}{\operatorname{Mod}}
\nc{\DS}{\operatorname{DS}}
\nc{\Soc}{\operatorname{Soc}}
\nc{\Inj}{\operatorname{Inj}}
\nc{\Hom}{\operatorname{Hom}}
\nc{\End}{\operatorname{End}}
\nc{\supp}{\operatorname{supp}}
\nc{\smult}{\operatorname{smult}}
\nc{\Dyn}{\operatorname{Dyn}}
\nc{\Card}{\operatorname{Card}}
\nc{\Ann}{\operatorname{Ann}}
\nc{\Arc}{\operatorname{Arc}}
\nc{\arc}{\operatorname{arc}}
\nc{\Ind}{\operatorname{Ind}}
\nc{\Coind}{\operatorname{Coind}}
\nc{\wt}{\operatorname{hwt}}
\nc{\hwt}{\operatorname{wt}}
\nc{\rk}{\operatorname{rank}}
\nc{\ch}{\operatorname{ch}}
\nc{\sch}{\operatorname{sch}}
\nc{\mdim}{\operatorname{mdim}}
\nc{\Stab}{\operatorname{Stab}}
\nc{\hull}{\operatorname{hull}}
\nc{\Irr}{\operatorname{Irr}}
\nc{\Spec}{\operatorname{Spec}}
\nc{\Res}{\operatorname{Res}}
\nc{\res}{\operatorname{res}}
\nc{\Aut}{\operatorname{Aut}}
\nc{\Ext}{\operatorname{Ext}}
\nc{\Prec}{\operatorname{Prec}}
\nc{\Fract}{\operatorname{Fract}}
\nc{\gr}{\operatorname{gr}}
\nc{\vol}{\operatorname{vol}}
\nc{\diag}{\operatorname{diag}}
\nc{\deff}{\operatorname{def}}
\nc{\core}{\operatorname{core}}
\nc{\HC}{\operatorname{HC}}
\nc{\Ch}{\operatorname{Ch}}
\nc{\Sch}{\operatorname{Sch}}
\nc{\dpth}{\operatorname{dpth}}
\nc{\Pol}{\operatorname{Pol}}
\nc{\pari}{\operatorname{par}}
\nc{\pos}{\operatorname{pos}}
\nc{\Cl}{\mathcal{C}\ell}
\nc{\wdchi}{\widetilde{\chi}}
\nc{\wdH}{\widetilde{H}}
\nc{\wdN}{\widetilde{N}}
\nc{\wdM}{\widetilde{M}}
\nc{\wdO}{\widetilde{O}}
\nc{\wdR}{\widetilde{R}}
\nc{\wdV}{\widetilde{V}}
\nc{\wdC}{\widetilde{C}}
\nc{\zero}{\operatorname{zero}}
\nc{\nonzero}{\operatorname{nonzero}}
\nc{\Nonzero}{\operatorname{Nonzero}}
\nc{\diam}{\operatorname{diam}}
\nc{\Obj}{\operatorname{Obj}}
\nc{\Dglie}{\operatorname{{\mathcal D}glie}}
\nc{\Fin}{\operatorname{{\mathcal F}in}}
\nc{\pr}{\operatorname{pr}}
\nc{\Adm}{\operatorname{\mathcal{A}dm}}
\nc{\fg}{\mathfrak g}
\nc{\Sg}{{\cS(\fg)}}
\nc{\Shg}{{\cS(\fhg)}}
\nc{\Ug}{{\cU(\fg)}}
\nc{\Uhg}{{\cU(\fhg)}}
\nc{\Sh}{{\cS(\fh)}}
\nc{\Uh}{{\cU(\fh)}}
\nc{\Uhh}{{\cU(\fhh)}}
\nc{\Zg}{{{\mathcal{Z}}(\fg)}}
\nc{\Vir}{{\mathcal{V}ir}}
\nc{\NS}{{\mathcal{N}S}}
\nc{\tZg}{{\widetilde{\mathcal Z}({\mathfrak g})}}
\nc{\Zk}{{\mathcal Z}({\mathfrak k})}
\nc{\Up}{{\mathcal U}({\mathfrak p})}
\nc{\Ah}{{\mathcal A}({\mathfrak h})}
\nc{\Ag}{{\mathcal A}({\mathfrak g})}
\nc{\Ap}{{\mathcal A}({\mathfrak p})}
\nc{\Zp}{{\mathcal Z}({\mathfrak p})}
\nc{\cR}{\mathcal R}
\nc{\cS}{\mathcal S}
\nc{\cP}{\mathcal P}
\nc{\cT}{\mathcal{T}}
\nc{\CC}{\mathcal C}
\nc{\cA}{\mathcal A}
\nc{\cV}{\mathcal V}
\nc{\cU}{\mathcal U}
\nc{\cZ}{\mathcal Z}
\nc{\cM}{\mathcal M}
\nc{\cL}{\mathcal L}
\nc{\cF}{\mathcal F}
\nc{\cB}{\mathcal{B}}
\nc{\fo}{\mathfrak o}
\nc{\fa}{\mathfrak a}
\nc{\CO}{\mathcal O}
\nc{\CR}{\mathcal R}
\nc{\cK}{\mathcal{K}}
\nc{\cW}{\mathcal{W}}
\nc{\bM}{\mathbf{M}}
\nc{\bL}{\mathbf{L}}
\nc{\bN}{\mathbf{N}}
\nc{\zq}{\mathpzc q}
\nc{\fl}{\mathfrak l}
\nc{\fn}{\mathfrak n}
\nc{\fm}{\mathfrak m}
\nc{\fp}{\mathfrak p}
\nc{\fh}{\mathfrak h}
\nc{\ft}{\mathfrak t}
\nc{\fk}{\mathfrak k}
\nc{\fb}{\mathfrak b}
\nc{\fs}{\mathfrak s}
\nc{\fB}{\mathfrak B}
\nc{\vareps}{\varepsilon}
\nc{\varesp}{\varepsilon}
\nc{\veps}{\varepsilon}
\nc{\fsl}{\mathfrak{sl}}
\nc{\fgl}{\mathfrak{gl}}
\nc{\fso}{\mathfrak{so}}
\nc{\fosp}{\mathfrak{osp}}
\nc{\fsp}{\mathfrak{sp}}
\nc{\fq}{\mathfrak q}
\nc{\fsq}{\mathfrak{sq}}
\nc{\fpsq}{\mathfrak{psq}}
\nc{\fpq}{\mathfrak{pq}}
\nc{\fhg}{\hat{\fg}}
\nc{\fhn}{\hat{\fn}}
\nc{\fhh}{\hat{\fh}}
\nc{\fhb}{\hat{\fb}}
\nc{\hrho}{\hat{\rho}}
\nc{\hsl}{\hat{\fsl}}
\nc{\fpo}{\mathfrak{po}}
\nc{\dirlim}{\underset{\rightarrow}{\lim}\,}
\nc{\nen}{\newenvironment}
\nc{\ol}{\overline}
\nc{\ul}{\underline}
\nc{\ra}{\rightarrow}
\nc{\lra}{\longrightarrow}
\nc{\Lra}{\Longrightarrow}
\nc{\bo}{\bar{1}}
\nc{\Lla}{\Longleftarrow}
\nc{\Llra}{\Longleftrightarrow}
\nc{\thla}{\twoheadleftarrow}
\nc{\lang}{(}
\nc{\rang}{)}
\nc{\hra}{\hookrightarrow}
\nc{\iso}{\overset{\sim}{\lra}}
\nc{\ssubset}{\underset{\not=}{\subset}}
\nc{\vac}{|0\rangle}
\nc{\simka}{{\ \scriptscriptstyle _{{\sim}}^\text{\tiny{k}}\ }}
\nc{\Thm}[1]{Theorem~\ref{#1}}
\nc{\Prop}[1]{Proposition~\ref{#1}}
\nc{\Lem}[1]{Lemma~\ref{#1}}
\nc{\Cor}[1]{Corollary~\ref{#1}}
\nc{\Conj}[1]{Conjecture~\ref{#1}}
\nc{\Claim}[1]{Claim~\ref{#1}}
\nc{\Defn}[1]{Definition~\ref{#1}}
\nc{\Exa}[1]{Example~\ref{#1}}
\nc{\Rem}[1]{Remark~\ref{#1}}
\nc{\Note}[1]{Note~\ref{#1}}
\nc{\Quest}[1]{Question~\ref{#1}}
\nc{\Hyp}[1]{Hypoth\`ese~\ref{#1}}
\begin{document}
\setcounter{section}{-1}
\setcounter{tocdepth}{1}

\title[]{On the character ring of  a quasireductive Lie superalgebra}
\author{ M. Gorelik}

\address{Weizmann Institute of Science, Rehovot, Israel}
\email{maria.gorelik@weizmann.ac.il}

%	
%\address{Dept. of Mathematics, Ben Gurion University, Beer-Sheva, Israel} 
%
%\email{xandersherm@gmail.com}

\date{}

\begin{abstract} 
We study character rings of quasireductive Lie superalgebras
and give a new proof of the Sergeev-Veselov  theorem
describing the character rings of finite-dimensional Kac-Moody superalgebras. 
\end{abstract}

\subjclass[2010]{17B10, 17B20, 17B55, 18D10.}

\medskip

\keywords{Lie superalgebras, Grothendieck groups, mixed base}

\maketitle
\rightline{{\em
Order is the key to all problems.}}
\rightline{
Alexandre Dumas,}
\rightline{The Count of Monte Cristo}

\section{Introduction}
One of the classical results of the representation theory of  complex semisimple Lie algebras can be formulated as follows (see e.g.~\cite{SV}):

{\em The character map induces an isomorphism between
the representation ring of finite-dimensional
modules of a complex semisimple Lie algebra $\fg$ and $W$-invariants in
the group ring $\mathbb{Z}[P_0]$ where $W$ is the Weyl group and $P_0$
is the weight lattice of $\fg$. }

The ``representation ring''  $\cK(\fg)$ of a Lie algebra $\fg$ is, by definition, the Grothendieck group
of the category of finite-dimensional modules, that is the quotient
of a free abelian group with generators given by all isomorphism classes
of finite-dimensional $\fg$-modules by the subgroup generated by
$[N]-[M]-[N/M]$ for each module $N$ and its submodule $M$.
We will use the same definition for Lie superalgebras and  denote by $\cK_{\pm}(\fg)$ the quotient of $\cK(\fg)$ by the relations
$[N]=\pm [\Pi N]$, where $\Pi$ stands for the parity change functor.
 In~\cite{SV} Sergeev and Veselov described the ring $\cK_-(\fg)$ for the 
finite-dimensional Kac-Moody superalgebras and some related algebras.
For $Q$-type superalgebras
Reif described the subring corresponding to integral and half-integral weights  
 of $\cK_+(\fg)$ in~\cite{Reif}.

%
%We denote by $\xi$ a formal variable satisfying $\xi^2=1$ and view
%the ring $\mathbb{Z}[\xi]$ as a subring in $\cK(\fg)$ 
%indentifying $\xi$  with 
%the image of  ``odd trivial'' $\fg$-module; we set $\cK_{\pm}(\fg)=\cK(\fg)/(\xi\mp 1)$.

Let $\fg$ be a quasireductive Lie superalgebra (see~\cite{Sqred},\cite{Maz}): this is a finite-dimensional
Lie superalgebra with a reductive even part $\fg_{\ol{0}}$
which acts semisimply on the odd part $\fg_{\ol{1}}$. Let $\ft$
be a Cartan subalgebra of $\fg_{\ol{0}}$ and $\fh:=\fg^{\ft}$ be a Cartan subalgebra of $\fg$. If $\fg'$ is a subalgebra of $\fg$, the restriction functor 
$\Res^{\fg}_{\fg'}$ induces the ring  homomorphisms $\res^{\fg}_{\fg'}$
from $\cK(\fg)$  to $\cK(\fg')$
and from $\cK_{\pm}(\fg)$ to $\cK_{\pm}(\fg')$. 
All three homomorphisms $\res^{\fg}_{\fh}$
are injective as well as the homomorphism
$\res^{\fg}_{\ft}:\cK_{+}(\fg)\to\cK_{+}(\ft)$.
The ring $\cK(\ft)$ can be naturally identified with the ring
$\mathbb{Z}[\ft^*]\otimes_{\mathbb{Z}} \mathbb{Z}[\xi]$
where $\mathbb{Z}[\ft^*]$ is the group ring of $\ft^*$
and  $\xi$ a formal variable satisfying $\xi^2=1$.
In this paper we study
 the image of $\res^{\fg}_{\ft}:\cK(\fg)\to\cK(\ft)$, which we denote
by $\Ch(\fg)$ and  the images of $\res^{\fg}_{\ft}:\cK_{\pm}(\fg)\to\cK_{\pm}(\ft)$, which we denote
by $\Ch_{\pm}(\fg)$. The image of $\cK(\fg)$ in $\cK(\fh)$ is studied in~\cite{GSS}.
By above, $\cK_+(\fg)\cong \Ch_+(\fg)$; if $\fh=\ft$, then
  $\cK_-(\fg)\cong \Ch_-(\fg)$  (this holds in
the cases studied by Sergeev and Veselov in~\cite{SV}).

We fix a triangular decomposition of
$\fg_{\ol{0}}$ and denote by $\pi$ the
set of simple roots, by $P^+(\pi)$
 the set of $\fg_{\ol{0}}$-dominant weights
and  by $P_0$ the $\mathbb{Z}$-span of $P^+(\pi)$.
Let $W\subset GL(\ft^*)$ be the Weyl group. We set
$\mathbb{Z}[P_0;\xi]:=\mathbb{Z}[P_0]\otimes_{\mathbb{Z}} \mathbb{Z}[\xi]$.
Since all weights of finite-dimensional $\fg$-module lie in $P_0$,
$\Ch(\fg)$ is a subring in the ring $R(P_0):=\Ch(\fh)\cap \mathbb{Z}[P_0;\xi]$
(one has $R(P_0)=\mathbb{Z}[P_0;\xi]$ if  $\fh=\ft$).

The $W$-action $we^{\lambda}:=e^{w\lambda}$ naturally extends to
the $W$-action on $R(P_0)$.

We denote by $\Delta$  the root system of $\fg$ and by $\Delta_{iso}$
the set of roots $\beta$ such that for some $e_{\pm}\in \fg_{\pm\beta}$
the subalgebra spanned by $e_{\pm}, h_{\beta}:=[e_+,e_-]$ is isomorphic to
 $\fsl(1|1)$.  Recall
that any element in $R(P_0)\subset \mathbb{Z}[P_0;\xi]$ takes the form
$\sum_{\nu} m_{\nu} e^{\nu}$ with $m_{\nu}\in \mathbb{Z}[\xi]$.
We set
\begin{equation}\label{AP}\begin{array}{l}
\cA(\fg):=\bigcap_{\beta\in\Delta_{iso}}\{\displaystyle\sum_{\nu} m_{\nu} e^{\nu}\in R(P_0)^W\ |\   \langle\nu,h_{\beta}\rangle\not=0\ \Longrightarrow\ 
 \displaystyle\sum_{i\in\mathbb{Z}} 
(-\xi)^i  m_{\nu+i\beta}=0\}.
\end{array}
 \end{equation}

Considering the homomorphisms $\res^{\fg}_{\fgl(1|1)}:\cK(\fg)\to\cK(\fgl(1|1))$ we obtain
$$\Ch(\fg)\subset \cA(\fg).$$
In this paper we show that $\Ch(\fg)= \cA(\fg)$ 
if $\fg$ is a finite-dimensional Kac-Moody superalgebra
or a $Q$-type superalgebra. From~\cite{SV} it follows that $\Ch(\fg)\not=\cA(\fg)$
for $\fg=\mathfrak{psl}(2|2)$. In addition, we show that
if $\fg$ is a $Q$-type superalgebra or a finite-dimensional Kac-Moody superalgebra
$\fg\not=\fgl(1|1)$, then the ring $\Ch(\fg)$ admits a ``short''
$\mathbb{Z}$-basis, see~\ref{short} for definition.
If $\fg$ is semisimple, this short basis is
$\{b_{\lambda},\xi b_{\lambda}\}_{\lambda\in P^+(\pi)}\ $ where
$b_{\lambda}=\sum_{\nu\in W\lambda} e^{\nu}$.

The formula $\Ch(\fg)=\cA(\fg)$ implies that
$\Ch_{\pm}(\fg)=\cA_{\pm}(\fg)$, where $\cA_{\pm}(\fg)$ are the images
of $\cA(\fg)$ under the evaluations $\xi\mapsto\mp 1$.
The formula $\Ch_-(\fg)=\cA_-(\fg)$ is equivalent to the Sergeev-Veselov formula.

%\begin{equation}\label{Apm}
%\cA_{\pm}(\fg):=\bigcap_{\beta\in\Delta_{iso}}\{\sum_{\nu} m_{\nu} e^{\nu}\in R_{\pm}(P_0)^W\ |\   \langle\nu,h_{\beta}\rangle\not=0\ \Longrightarrow\ 
% \sum_{i\in\mathbb{Z}} 
%(\pm 1)^i  m_{\nu+i\beta}=0\}\end{equation}
%where $R_{\pm}(P_0)$ are subrings of $\mathbb{Z}[P_0]$ described in~\ref{RP0}
%(one has  $R_{\pm}(P_0)=\mathbb{Z}[P_0]$ if $\fh=\ft$).

The formula $\Ch(\fg)=\cA(\fg)$ for the
finite-dimensional Kac-Moody superalgebras (resp., $Q$-type) 
can be deduced from the results of~\cite{SV} (resp.,~\cite{Reif}), but
we  give an alternative proof of these results. By contrast to the proof in~\cite{SV},
our proof does not require 
knowlegde of any characters, but is based on the existence of ``short bases''.
Our main tool is the standard partial order  on $\ft^*$.
 Instead of the distinguished base of simple roots 
used in~\cite{SV} we use so-called ``mixed bases''. 
The distinguished bases contain
at most one odd root whereas
 the mixed bases contain the maximal possible 
number of odd roots. The mixed bases are useful for Kac-Wakimoto character 
formulae. For the $\osp$-case the mixed bases were used
 for the description of characters in~\cite{GS}. 
For $\fg\not=\fgl(n|n)$ our proof is based on the following simple lemma.

\begin{lem}{propmainstep0}
Let $\fg$ be a quasireductive Lie superalgebra with a base $\Sigma$ satisfying  the following properties:
\begin{itemize}
\item[(Pr1)]  $-\mathbb{R}_{\geq 0}\Delta^+\cap \mathbb{R}_{\geq 0}P^+(\pi)=\{0\}$.
\item[(Pr2)] {\em For each $\eta\in P^+(\pi)\setminus P^+(\Delta^+)$
there exists  $\beta\in\Sigma\cap\Delta_{iso}$ 
such that }
$$\langle \eta,h_{\beta}\rangle\not=0\ \ \text{ and }\ \ \eta-\beta\not\in P^+(\pi).$$
\end{itemize}
Then $\Ch(\fg)$ admits a ``short basis'' and 
$\Ch(\fg)= \cA(\fg)$. 
\end{lem}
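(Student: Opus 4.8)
The plan is to show both conclusions — existence of a short basis and the equality $\Ch(\fg)=\cA(\fg)$ — simultaneously, by constructing for each $\lambda\in P^+(\pi)$ an explicit element of $\Ch(\fg)$ whose leading term (with respect to the standard partial order on $\ft^*$) is $e^\lambda$, and then arguing that these elements, together with their $\xi$-multiples, form a basis of $\cA(\fg)$. First I would set up the combinatorial framework: equip $P_0$ with the standard partial order $\mu\preceq\nu \iff \nu-\mu\in\mathbb{R}_{\geq 0}\Delta^+$, and observe that property (Pr1) guarantees this is a genuine partial order on the support of any element of $R(P_0)$ with weights in $P^+(\pi)$ (it rules out infinite descending chains and makes ``leading term'' well-defined). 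For $\lambda\in P^+(\Delta^+)$, the Kac module or the irreducible $L(\lambda)$ already gives an element of $\Ch(\fg)$ with top term $e^\lambda$ (after $W$-symmetrization, $b_\lambda=\sum_{\nu\in W\lambda}e^\nu$ plus lower-order corrections). The real content is the remaining $\lambda\in P^+(\pi)\setminus P^+(\Delta^+)$.

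For those $\lambda$, I would argue by downward induction on $\preceq$ that $e^\lambda$ (symmetrized) occurs as the top term of some element of $\cA(\fg)$, and dually that every element of $\cA(\fg)$ is a $\mathbb{Z}[\xi]$-combination of such building blocks — which forces $\Ch(\fg)\supseteq\cA(\fg)$ once we know the building blocks lie in $\Ch(\fg)$. The key local move is exactly property (Pr2): given such an $\eta=\lambda$, pick the isotropic $\beta\in\Sigma\cap\Delta_{iso}$ with $\langle\eta,h_\beta\rangle\neq 0$ and $\eta-\beta\notin P^+(\pi)$. The $\fsl(1|1)$-relation built into the definition of $\cA(\fg)$ — namely $\sum_{i}(-\xi)^i m_{\nu+i\beta}=0$ whenever $\langle\nu,h_\beta\rangle\neq 0$ — then says that the coefficient $m_\eta$ of $e^\eta$ in any $f\in\cA(\fg)$ is determined by (a $\pm\xi$-multiple of) the coefficients $m_{\eta+i\beta}$ for $i\neq 0$. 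Since $\eta-\beta\notin P^+(\pi)$, the only ``companion'' weight in the relevant $\beta$-string that can carry a nonzero coefficient and is not $\preceq$-smaller is $\eta+\beta$, and $\eta+\beta\succ\eta$; running the string in the correct direction and using the $W$-invariance to move among $W$-conjugates, one expresses the top-term behaviour of $f$ near $e^\eta$ in terms of strictly smaller data, which closes the induction and simultaneously produces the short basis element attached to $\eta$ (its support being contained in $W\eta \cup \{\text{strictly lower weights}\}$, i.e.\ it is ``short'' in the sense of~\ref{short}).

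To finish, I would package the induction as: the set $\{b^\Sigma_\lambda,\ \xi b^\Sigma_\lambda\}_{\lambda\in P^+(\pi)}$, where $b^\Sigma_\lambda$ is the element constructed above with top term $b_\lambda$, is a $\mathbb{Z}$-basis of $\cA(\fg)$ (spanning by the induction just described; $\mathbb{Z}$-linear independence because the top terms $e^\lambda$ are distinct and $1,\xi$ are independent over $\mathbb{Z}$). Each $b^\Sigma_\lambda$ lies in $\Ch(\fg)$: for $\lambda\in P^+(\Delta^+)$ this is the character of an honest finite-dimensional module, and for the remaining $\lambda$ one exhibits it as an explicit $\mathbb{Z}[\xi]$-combination of such characters, the combination being exactly the one dictated by the $\fsl(1|1)$-relations used in the induction (this is where one imports that $\res^{\fg}_{\fgl(1|1)}$ is the only obstruction). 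Combined with the already-known inclusion $\Ch(\fg)\subseteq\cA(\fg)$ from the introduction, this yields equality and the short basis at once. The main obstacle I anticipate is the bookkeeping in the inductive step: one must check that the $\beta$-string through $\eta$ never re-enters $P^+(\pi)$ on the ``wrong'' side and that the $W$-orbit manipulations do not create new maximal weights — i.e.\ that (Pr1) and (Pr2) really do confine all the correction terms strictly below $\eta$ in $\preceq$. Handling $\fg=\fgl(n|n)$ is explicitly excluded here and would need the separate mixed-base argument.
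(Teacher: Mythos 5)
Your proposal contains a genuine gap that would make the induction collapse. You propose to build, for \emph{every} $\lambda\in P^+(\pi)$ — including $\lambda\in P^+(\pi)\setminus P^+(\Delta^+)$ — an element $b^\Sigma_\lambda\in\cA(\fg)$ with leading term $e^\lambda$, and to declare $\{b^\Sigma_\lambda,\xi b^\Sigma_\lambda\}_{\lambda\in P^+(\pi)}$ a basis of $\cA(\fg)$. No such element exists when $\lambda$ is not dominant: since $\Ch(\fg)$ is spanned over $\mathbb{Z}[\xi]$ by the $\ch_\xi L(\mu)$ with $\mu\in P^+(\Delta^+)$, every nonzero element of $\Ch(\fg)$ has all of its maximal support weights in $P^+(\Delta^+)$; and the whole content of the lemma is that this persists for $\cA(\fg)$, i.e.\ $\max\supp(y)\subset P^+(\Delta^+)$ for every nonzero $y\in\cA(\fg)$. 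The short basis is indexed by $P^+(\Delta^+)$, not by $P^+(\pi)$ (compare the definition in \ref{short}).

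The flaw shows itself in the ``key local move.'' You observe that the $\fsl(1|1)$-relation expresses $m_\eta$ in terms of the other $m_{\eta+i\beta}$, but stop at ``the top-term behaviour is determined by strictly smaller data.'' That is not a contradiction, and it does not produce a basis element; it is the starting point, not the conclusion. What actually closes the argument (as in \Lem{lem0}) is that the constraint, combined with $W$-stability of the support, forces a support weight strictly above $\eta$: since $\eta\in P^+(\pi)$ but $\eta-\beta\notin P^+(\pi)$, there is $\alpha\in\pi$ with $s_\alpha\eta\leq\eta$ and $s_\alpha(\eta-\beta)>\eta-\beta$; the $\fsl(1|1)$-relation forces $\eta-j\beta\in\supp(y)$ for some $j\geq 1$; and a short computation using $s_\alpha\beta<\beta$ and $\beta-\alpha<0$ gives $s_\alpha(\eta-j\beta)>\eta$, contradicting maximality of $\eta$ in the $W$-stable set $\supp(y)$. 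Your ``running the string in the correct direction and using $W$-invariance'' gestures at this but never identifies the reflection $s_\alpha$ or derives the strict inequality, so the induction is not in fact closed. Once you have the correct statement — that nonzero elements of $\cA(\fg)$ have their maxima in $P^+(\Delta^+)$ — the rest of your outline (finiteness of down-sets from (Pr1), peeling off $b_\lambda$'s by downward induction, linear independence via leading terms) does match the paper's Lemma~\ref{cor0} and would go through.
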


It is easy to see that (Pr1) holds if $\Delta^+$ and $-\Delta^+$
are $W$-conjugated. The property (Pr2) looks rather technical, but, in fact, admits a nice interpretation in terms of odd reflections;
 each finite-dimensional Kac-Moody superalgebra admits 
bases satisfying 
\begin{equation}\label{P+Delta}
P^+(\Delta^+)=\{\lambda\in P^+(\pi)|\ \forall \beta\in\Sigma\cap\Delta_{iso}\ \
\langle\lambda,h_{\beta}\rangle\not=0\ \Longrightarrow \lambda-\beta\in P^+(\pi)\}
\end{equation}
which obviously implies (Pr2).
The mixed triangular decompositions satisfies both  (Pr1) and (Pr2) for $\fg\not=\fgl(n|n)$. For  
the $Q$-type superalgebras all triangular decompositions satisfy (Pr1), (Pr2). This establishes $\Ch(\fg)= \cA(\fg)$
for the $Q$-type superalgebras and the finite-dimensional Kac-Moody superalgebras
  $\fg\not=\fgl(n|n)$. 
 For $\fgl(n|n)$ we take a triangular decompostion satisfying (Pr1) and
 a weaker version of (Pr2).

{\bf Contents of the paper.}
In Section~\ref{ringK} we collect some general results about
the Grothendieck  rings $\cK(\fg)$ and the rings $\Ch(\fg)$ in the case when $\fg$ is quasireductive.
In particular, we show that $\Ch(\fg)$ is a subring in $\cA(\fg)$.

In Section~\ref{sect1} we define ``short bases'', 
prove the above lemma and deduce the formula $\Ch(\fg)= \cA(\fg)$
for the $Q$-type superalgebras and the finite-dimensional Kac-Moody superalgebras.

In Section~\ref{Reform} 
we describe the rings $\cA_{\pm}(\fg)$ in the fashions of~\cite{SV}
and of~\cite{Reif}. We deduce
the formula $\Ch(\fp_n)=\cA(\fp_n)$ from the results of~\cite{IRS}.

In Appendix we recall the criteria of dominance for  the finite-dimensional Kac-Moody superalgebras and obtain
the formula~(\ref{P+Delta}) in~\Cor{coro}.

{\bf Acknowledgments.}
The author was supported by  ISF Grant 1957/21. 
The author is grateful to  Vladimir Hinich for numerous helpful suggestions and
to Inna Entova-Aizenbud,  Shifra Reif, Alex Sherman and  Vera Serganova for 
stimulating discussions. The author   is grateful to Shay Kinamon Kerbis and Lior-David Silberberg for their corrections and comments on the earlier
version of the manuscript.

{\bf Index of definitions and notation}
Throughout the paper the ground field is $\mathbb{C}$; 
$\mathbb{N}$ stands 
 for the set of non-negative integers. We denote by $\Pi$ the parity change functor. 

\begin{center}
\begin{tabular}{lcl}
$\ft$, $\pi$, $P^+(\pi)$, $P_0$,  $\fh$, $\Delta$, $W$, $s_{\alpha}$, partial order $>$, a base & & \ref{tri} \\

$\Delta_{iso}$, $h_{\beta}$ & &  \ref{setio}\\

$C_{\lambda}$, $L(\lambda)$, $P^+(\Delta^+)$ & & \ref{Clambda}\\

$\cK(\CC)$, $\cK_{\pm}(\CC)$, $\cK(\fg)$, $\cK_{\pm}(\fg)$, $\psi_{\pm}$, $\res^{\fg}_{\fg'}$ & & \ref{notat2} \\

$I_1$, $\supp$ & & \ref{basisK} \\

$\mathbb{Z}[P_0;\xi]$,  $\psi_{\pm}$, $\ch_{\xi}$, $\ch$, $\sch$, $\Ch(\fg)$, $\Ch_{\pm}(\fg)$ & & \ref{RingCh}\\ 

$R(P_0)$, $R_{\pm}(P_0)$, $\underset{R(P_0)}{\times}$  & & \ref{RP0}\\

$\cA(\fg)$, $\cA_{\pm}(\fg)$ & & \ref{RingsACh}\\
short basis, $b_{\lambda}$ & & \ref{short}\\

\end{tabular}
\end{center}

\section{Preliminary}
Throughout the paper $\fg$ is a quasireductive Lie superalgebra. 

\subsection{Triangular decompositions}\label{tri}
 Recall that $\fg_{\ol{0}}$ is a reductive Lie algebra.
We fix a Cartan subalegbra $\ft$  in $\fg_{\ol{0}}$. We  denote  by 
$\Delta_{\ol{0}}$ the set of roots of $\fg_{\ol{0}}$ and by
$\Delta$
the set of roots of $\fg$. We set $\fh:=\fg^{\ft}$.
Let $W\subset GL(\ft^*)$ be the Weyl group.

Given $h\in\ft$ with the property
$\Ree\langle\alpha,h\rangle\not=0$ for each $\alpha\in\Delta$
we have the corresponding subsets of positive roots $\Delta^+$
and $\Delta_{\ol{0}}^+$ (on which $\Ree\langle h,\alpha\rangle>0$).
For different choices of $h$ the subsets $\Delta_{\ol{0}}^+$
 may be different,
but they can be transformed to each other by the Weyl group.
We will fix one of them, $\Delta_{\ol{0}}^+$,
and consider only the subsets of positive roots $\Delta^+$ in $\Delta$,
which contain $\Delta_{\ol{0}}^+$. This choice fixes a triangular decomposition
of $\fg$, compatible with the triangular decomposition of $\fg_{\ol{0}}$,
corresponding to $\Delta_{\ol{0}}^+$. 

We  denote by $\pi$ the
set of simple roots for $\Delta_{\ol{0}}^+$.
For each $\alpha\in\pi$ we denote by $s_{\alpha}$ the corresponding reflection in $W$. Recall that $P^+(\pi)$ stands for
 the set of the highest weights of the finite-dimensional 
  simple $\fg_{\ol{0}}$-modules 
and  $P_0$ is the $\mathbb{Z}$-span of $P^+(\pi)$ ($P_0$
is a lattice if $\fg_{\ol{0}}$ is semisimple).

Our main tool is the standard partial order $>$ on
$\ft^*$ given by  $\lambda\geq \nu$ if $\lambda-\nu\in\mathbb{N}\Delta^+$.
For each $U\subset \ft^*$ we denote by $\max U$  the set of 
maximal elements in $U$.

A set $\Sigma\subset\Delta^+$ is called a {\em base} if the elements
of  $\Sigma$ are linearly independent and $\Delta^{\pm}$ lies
in $\pm\mathbb{N}\Sigma$ (where $\mathbb{N}\Sigma$
is the set of non-negative integral linear combinations of $\Sigma$);
in this case we write $\Delta^+=\Delta^+(\Sigma)$.
The root systems of Kac-Moody superalgebras, $\fq_n$, $\fp_n$ and $\fgl(m|n)$ admit a base. The compatibility condition ($\fn^+_{\ol{0}}\subset \fn^+$)
means that $\pi\subset \mathbb{N}\Sigma$.

\subsection{Set $\Delta_{iso}$}\label{setio}
We denote by $\Delta_{iso}$
the set of roots $\beta$ such that for some $e_{\pm}\in \fg_{\pm\beta}$
the subalgebra spanned by $e_{\pm}, h_{\beta}:=[e_+,e_-]$ is isomorphic to
 $\fsl(1|1)$. 
Clearly, $\Delta_{iso}=-\Delta_{iso}$.

Take  $\beta\in\Delta_{iso}$. Since $\beta\not=0$, there exists $h'\in\ft$
satisfying $\langle \beta,h'\rangle\not=0$. Taking $\fsl(1|1)$ as above we obtain
$\fsl(1|1)+\mathbb{C}h'\cong \fgl(1|1)$.

Observe that for $\fsl(1|1)$ we have $\fg_{\ol{0}}=\ft$
and $\fh=\fg$, so $\Delta=\Delta_{iso}=\emptyset$.

\subsection{}
\begin{lem}{lem-1}
For all $\lambda\in\mathbb{R}_{\geq 0}P^+(\pi)$, $w\in W$ we have $\lambda-w\lambda\in\mathbb{R}_{\geq 0}\pi$. Moreover, 
$$
\{\lambda\in P_0|\ \forall \alpha\in\pi\ s_{\alpha}\lambda\leq \lambda\}=
\{\lambda\in P_0|\ \forall w\in W\ w\lambda\leq \lambda\}=P^+(\pi).
$$

\end{lem}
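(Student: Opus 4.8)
The plan is to reduce everything to two standard facts about the Weyl group $W$ of the reductive Lie algebra $\fg_{\ol{0}}$: first, that every positive root of $\fg_{\ol{0}}$ lies in $\mathbb{N}\pi$ (since $\pi$ is the set of simple roots of $\Delta_{\ol{0}}^+$); and second, that $\Delta^+$ is cut out by a real linear functional, namely $\Ree\langle h,\cdot\rangle$ for the element $h\in\ft$ fixing the triangular decomposition in~\ref{tri}, so that no nontrivial nonnegative combination of roots in $\Delta^+$ can vanish.

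For the first assertion, observe that any $\lambda\in\mathbb{R}_{\geq0}P^+(\pi)$ satisfies $\langle\lambda,\alpha^\vee\rangle\geq0$ for all $\alpha\in\pi$, so it is enough to show: if $\langle\lambda,\alpha^\vee\rangle\geq0$ for all $\alpha\in\pi$, then $\lambda-w\lambda\in\mathbb{R}_{\geq0}\pi$ for every $w\in W$. I would induct on the length $\ell(w)$. For $\ell(w)>0$ pick a reduced expression for $w$ and peel off the last simple reflection, writing $w=w's_\alpha$ with $\ell(w')=\ell(w)-1$; then $\ell(w's_\alpha)>\ell(w')$ forces $w'\alpha\in\Delta_{\ol{0}}^+\subset\mathbb{N}\pi$, and
\[
\lambda-w\lambda=(\lambda-w'\lambda)+w'(\lambda-s_\alpha\lambda)=(\lambda-w'\lambda)+\langle\lambda,\alpha^\vee\rangle\,w'\alpha,
\]
where the first summand lies in $\mathbb{R}_{\geq0}\pi$ by the inductive hypothesis and the second because $\langle\lambda,\alpha^\vee\rangle\geq0$ and $w'\alpha\in\mathbb{N}\pi$.

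For the ``moreover'' part I would prove the chain of inclusions $P^+(\pi)\subseteq B\subseteq A\subseteq P^+(\pi)$, where $A$ and $B$ denote the first and second set in the displayed equality. The inclusion $B\subseteq A$ is trivial, since each $s_\alpha\in W$. For $P^+(\pi)\subseteq B$: given $\lambda\in P^+(\pi)$, the first part yields $\lambda-w\lambda\in\mathbb{R}_{\geq0}\pi$; on the other hand, since $\lambda\in P_0$ and $P_0$ is $W$-stable, $\lambda-w\lambda$ is an integral combination of roots of $\fg_{\ol{0}}$, hence lies in $\mathbb{Z}\pi$, and by linear independence of $\pi$ we get $\lambda-w\lambda\in\mathbb{N}\pi\subseteq\mathbb{N}\Delta^+$, i.e. $w\lambda\leq\lambda$. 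For $A\subseteq P^+(\pi)$: if $s_\alpha\lambda\leq\lambda$ for all $\alpha\in\pi$, then $\langle\lambda,\alpha^\vee\rangle\,\alpha=\lambda-s_\alpha\lambda\in\mathbb{N}\Delta^+$; applying the functional $\Ree\langle h,\cdot\rangle$, which is strictly positive on $\Delta^+$, forces $\langle\lambda,\alpha^\vee\rangle\geq0$. Thus $\lambda\in P_0$ is $\fg_{\ol{0}}$-dominant, hence $\lambda\in P^+(\pi)$.

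The argument is essentially routine; the one point that needs a little care is the inclusion $A\subseteq P^+(\pi)$, where one must invoke that the full (possibly ``super'') positive system $\Delta^+$ admits a separating linear functional, in order to exclude a nontrivial nonnegative combination of roots in $\Delta^+$ being a strictly negative multiple of a simple root --- this is precisely what the construction of $\Delta^+$ in~\ref{tri} supplies. The other thing to keep straight throughout is the difference between ``$\in\mathbb{R}_{\geq0}\pi$'' (the output of the first part) and ``$w\lambda\leq\lambda$'', i.e. ``$\in\mathbb{N}\Delta^+$'' (what the partial order requires), the gap being bridged by the integrality that comes from $\lambda\in P_0$.
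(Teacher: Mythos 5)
Your proof is correct and follows essentially the same strategy as the paper: the paper cites the standard reductive characterization of $P_0$ and $P^+(\pi)$ in terms of $s_\alpha$ to obtain $\lambda - w\lambda \in \mathbb{N}\pi$, whereas you re-derive it by induction on $\ell(w)$, and both arrive at the chain of inclusions and close it via the integrality of $\lambda - s_\alpha\lambda$. Your explicit appeal to the separating functional $\Ree\langle h,\cdot\rangle$ in the step $A\subseteq P^+(\pi)$ spells out a detail the paper leaves implicit, but the argument is the same.
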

\begin{proof}
Since  $\fg_{\ol{0}}$ is reductive we have
$$\begin{array}{ll}
P_0=\{\mu\in \ft^*|\ \forall \alpha\in \pi\ \ \ \mu-s_{\alpha}\mu\in\mathbb{Z}\alpha\},\\ 
P^+(\pi)=\{\mu\in \ft^*|\ \forall \alpha\in \pi\ \ \  \mu-s_{\alpha}\mu\in\mathbb{N}\alpha\}
=\{\mu\in\ft^*|\ \ \forall w\in W\ \ \lambda-w\lambda\in\mathbb{N}\pi\}.
\end{array}$$
This implies  $\lambda-w\lambda\in\mathbb{R}_{\geq 0}\pi$
for $\lambda\in\mathbb{R}_{\geq 0}P^+(\pi)$, $w\in W$ and gives
$$
P^+(\pi)\subset \{\lambda\in P_0|\ \forall \alpha\in\pi\ \ s_{\alpha}\lambda\leq \lambda\}\ \subseteq
\{\lambda\in P_0|\ \forall w\in W\ w\lambda\leq \lambda\}.
$$
Take $\lambda\in P_0$ such that $w\lambda\leq \lambda$ for all $w\in W$.
Then $s_{\alpha}\lambda\leq \lambda$ for each $\alpha\in\pi$. Since
$\lambda\in P_0$ this gives $\lambda-s_{\alpha}\lambda\in\mathbb{N}\pi$.
Thus $\lambda\in P^+(\pi)$. Hence
$$ \{\lambda\in P_0|\ \forall w\in W\ w\lambda\leq \lambda\}\subset P^+(\pi).$$
This completes the proof.
\end{proof}

\subsection{Modules $L(\lambda)$}\label{Clambda}
Let $C_{\lambda}$ be a simple
$\fh$-module where $\ft$ acts by $\lambda$ and $\sdim C_{\lambda}\geq 0$
(such module is  unique up to
a parity change $\Pi$). 
We view $C_{\lambda}$ as a $\fb$-module with the zero action of $\fn$
and denote by $L(\lambda)$  a   simple quotient of
 $M(\lambda):=\Ind^{\fg}_{\fb} C_{\lambda}$ (such simple quotient is unique).
Note that  $\dim C_{\lambda}=1$ if
$\fh=\ft$. We set
$$P^+(\Delta^+):=\{\lambda\,|\ \dim L(\lambda)<\infty\}.$$
Clearly, $P^+(\Delta^+)\subset P^+(\pi)$.

\subsection{$Q$-type superalgebras}
By $Q$-type superalgebras we mean one of the Lie superalgebra $\fq_n$
and their subquoteints $\fsq_n$,
$\mathfrak{psq}_n$, $\mathfrak{pq}_n$. These are quasireductive Lie  superalgebras: $\fg_{\ol{0}}=\fgl_n$ for  $\fq_n,\fsq_n$
and  $\fg_{\ol{0}}=\fsl_n$ for $\mathfrak{psq}_n$, $\mathfrak{pq}_n$.
One has $\fsq_1=\mathbb{C}$ and $\mathfrak{psq}_1=0$; for all other cases
 $\fh\not=\ft$. In this case $\Delta=\Delta_{\ol{0}}=\Delta_{iso}$
(this set is empty if $n=1$).

\section{Grothendieck rings}\label{ringK}
In this section
we collect some general results about
the Grothendieck  rings $\cK(\fg)$ and the rings $\Ch(\fg)$, $\cA(\fg)$ in the case when $\fg$ is quasireductive. In~\Prop{lem-infty}
 we show that $\Ch(\fg)$ is a subring in $\cA(\fg)$.  

\subsection{Notation}\label{notat2}
If $\CC$ is a category of $\fg$-modules, the Grothendieck group $\cK(\CC)$
of $\CC$ is a free $\mathbb{Z}$-module spanned by $[N]$ with $N\in\CC$
subject to the relations $[M]+[N/M]=[N]$ for each pair
of  modules $M\subset N$;
we denote by $\cK_{\pm}(\CC)$ the quotient of $\cK(\CC)$ by the relations
$[N]=\pm [\Pi N]$. If $\CC$ is closed under the tensor product, $\cK(\CC)$ has a ring structure given by $[M]\cdot [N]:=[M\otimes N]$.

We denote by $\Fin(\fg)$ the full subcategory of finite-dimensional $\fg$-modules
and by $\cK(\fg)$ the Grothendieck ring of this category.

Recall that $\xi$ is a formal variable satisfying $\xi^2=1$. We identify
$\xi$ with the image of $\Pi\triv$ and denote by
 $\psi_{\pm}(\fg): \cK(\fg)\to\cK(\fg)/(\xi\mp 1)$  the canonical 
homomorphism. We set $\cK_{\pm}(\fg):=\psi_{\pm}(\fg)(\cK(\fg))$.

One readily sees that $\psi_+\times\psi_-$ gives an embedding
$\cK(\fg)\hookrightarrow \cK_+(\fg)\times \cK_-(\fg)$; this embedding is strict 
(for instance, $(0;1)$ does not belong to the image) and induces a bijection
$\cK(\fg)_{\mathbb{Q}}\iso 
\cK_+(\fg)_{\mathbb{Q}}\times \cK_-(\fg)_{\mathbb{Q}}$
(where for a $\mathbb{Z}$-module $M$ we set $M_{\mathbb{Q}}:=M\otimes_{\mathbb{Z}}\mathbb{Q}$).

\subsubsection{}\label{resnikov}
An exact functor $F:\CC\to\CC'$ induces a group homomorphism
$\cK(\fg)\to\cK(\fg')$ which is a ring homomorphism
if $F$ is a tensor functor. For the restriction functor
$\Res^{\fg}_{\fg'}$ we denote the corresponding homomorphism by $\res^{\fg}_{\fg'}$.
It is easy to check  that we have the following commutative diagrams

$$
\xymatrix{\cK(\fg) \ar[r]^{\res^{\fg}_{\fg'}} \ar[d]^{\psi_{\pm}(\fg)} & \cK(\fg')\ar[d]^{\psi_{\pm}} \\
 \cK_{\pm}(\fg)\ar[r]^{\res^{\fg}_{\fg'}}  & \cK_{\pm}(\fg') }
$$

\subsection{Structure of $\cK(\fg)$}\label{basisK}
We set 
$I_1:=\{\lambda\in\ft^*|\  \Pi C_{\lambda}\cong C_{\lambda}\}$.
We have
$$I_1=\{\lambda\in\ft^*|\  \Pi L(\lambda)\cong L(\lambda)\}.$$
Up to a parity shift $\Pi$
each  finite-dimensional simple module  is isomorphic to $L(\lambda)$
for some $\lambda\in P^+(\Delta^+)$. Thus
the ring $\cK(\fg)$ is a free $\mathbb{Z}$-module
with a basis 
$$\{[L(\lambda)]\}_{\lambda\in P^+(\Delta^+)}
\coprod\{\xi [L(\lambda)] \}_{\lambda\in P^+(\Delta^+)\setminus I_1}$$
where
 $\xi [L(\lambda)]=[L(\lambda)]$ for $\lambda\in P^+(\Delta^+)\cap I_1$.
The ring $\cK_+(\fg)$ is a free $\mathbb{Z}$-module
with a basis 
$\{\psi_+([L(\lambda)]\}_{\lambda\in P^+(\Delta^+)}$.

For  each $b=\displaystyle\sum_{\nu} m_{\nu} e^{\nu}$ with $m_{\nu}\in\mathbb{Z}[\xi]$
we set
$\displaystyle\supp(b):=\{\nu|\ m_{\nu}\not=0\}$.

\subsubsection{}
\begin{cor}{cor0res}
The maps $\res^{\fg}_{\fh}:\cK(\fg)\to\cK(\fh)$,  $\res^{\fg}_{\ft}:\cK_+(\fg)\to\cK_+(\ft)$
are injective.
\end{cor}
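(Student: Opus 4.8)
The plan is to reduce the injectivity of both maps to the basis description of $\cK(\fg)$ and $\cK_+(\fg)$ given in~\ref{basisK}, using the fact that the highest weight of $L(\lambda)$ (as a $\ft$-module, hence recorded in $\cK(\ft)$) detects $\lambda$. First I would recall that, by~\ref{basisK}, $\cK(\fg)$ is a free $\mathbb{Z}$-module on $\{[L(\lambda)]\}_{\lambda\in P^+(\Delta^+)}$ together with $\{\xi[L(\lambda)]\}_{\lambda\in P^+(\Delta^+)\setminus I_1}$, and similarly $\cK_+(\fg)$ is free on $\{\psi_+([L(\lambda)])\}_{\lambda\in P^+(\Delta^+)}$. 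So it suffices to show that the images under $\res^{\fg}_{\fh}$ (resp.\ $\res^{\fg}_{\ft}\circ\psi_+^{-1}$, appropriately interpreted) of these basis elements remain $\mathbb{Z}$-linearly independent.

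For the map $\res^{\fg}_{\ft}:\cK_+(\fg)\to\cK_+(\ft)$: the restriction of $L(\lambda)$ to $\ft$ decomposes as a sum of one-dimensional weight spaces, and by the standard highest-weight argument every weight $\nu$ of $L(\lambda)$ satisfies $\nu\leq\lambda$ with $\lambda$ itself occurring with multiplicity equal to $\dim C_\lambda$; in $\cK_+(\ft)$ this multiplicity is a positive integer. Thus in $\res^{\fg}_{\ft}\psi_+([L(\lambda)])=\sum_\nu m_\nu e^\nu$ the top term $e^\lambda$ has nonzero coefficient and all other $\nu$ are strictly $<\lambda$. Given a finite $\mathbb{Z}$-linear combination $\sum_{\lambda} c_\lambda\,\res^{\fg}_{\ft}\psi_+([L(\lambda)])=0$, pick $\lambda_0$ maximal among those $\lambda$ with $c_\lambda\neq 0$ (maximal for the partial order $>$ on $\ft^*$); then the coefficient of $e^{\lambda_0}$ on the left is $c_{\lambda_0}\cdot(\text{positive integer})\neq 0$, a contradiction. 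Hence all $c_\lambda=0$ and the map is injective.

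For $\res^{\fg}_{\fh}:\cK(\fg)\to\cK(\fh)$ the argument is parallel but one must be slightly more careful because of the parity variable $\xi$. The restriction $\Res^{\fg}_{\fh}L(\lambda)$ contains $C_\lambda$ as its "$\leq$-top" $\fh$-isotypic component (weight $\lambda$ for $\ft$), and in $\cK(\fh)$ one has $[C_\lambda]$ together with $\xi[C_\lambda]=[\Pi C_\lambda]$; the pair $\{[C_\lambda],\xi[C_\lambda]\}$ (which collapses when $\lambda\in I_1$) is part of a $\mathbb{Z}$-basis of $\cK(\fh)$, exactly mirroring the basis of $\cK(\fg)$. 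So the matrix expressing the images of the basis elements of $\cK(\fg)$ in terms of the basis of $\cK(\fh)$ is, after ordering weights compatibly with $>$, block upper-triangular with the diagonal blocks being the identity (on the $[C_\lambda]$, $\xi[C_\lambda]$ part), whence invertible over $\mathbb{Z}$ and in particular injective. One can phrase this cleanly by composing with $\res^{\fh}_{\ft}$ and tracking $\xi$: since $\res^{\fg}_{\ft}=\res^{\fh}_{\ft}\circ\res^{\fg}_{\fh}$, injectivity of $\res^{\fg}_{\fh}$ follows once we know the $\ft$-weight $\lambda$ together with the parity data of $C_\lambda$ determines the basis element, which is precisely the content of the $I_1$-bookkeeping in~\ref{basisK}.

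The only real subtlety — the part I expect to require the most care — is the interaction with $\xi$: one must make sure that the "leading term" argument sees not just the weight $\lambda$ but also, when $\lambda\notin I_1$, distinguishes $[L(\lambda)]$ from $\xi[L(\lambda)]$, i.e.\ that the super-dimension (or the $\mathbb{Z}[\xi]$-coefficient of $e^\lambda$) of the top weight space of $\Res L(\lambda)$ is not a zero-divisor in $\mathbb{Z}[\xi]=\mathbb{Z}[\xi]/(\xi^2-1)$ and records the parity faithfully; this is exactly where the hypothesis $\sdim C_\lambda\geq 0$ from~\ref{Clambda} and the definition of $I_1$ enter. Everything else is the routine highest-weight/triangularity bookkeeping sketched above.
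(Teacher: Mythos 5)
Your argument is correct and follows essentially the same route as the paper's: write an element in the simple-module basis of $\cK(\fg)$ (resp.\ $\cK_+(\fg)$), pick a maximal $\lambda_0$ with nonzero coefficient, and read off that coefficient from the top $\ft$-weight space, i.e.\ from the copy of $[C_{\lambda_0}]$ (and $\xi[C_{\lambda_0}]$) in $\cK(\fh)$, resp.\ from the positive multiplicity $\dim C_{\lambda_0}$ in $\cK_+(\ft)$. One caveat on your closing remark: deducing injectivity of $\res^{\fg}_{\fh}$ by composing with $\res^{\fh}_{\ft}$ is not actually ``cleaner''---it would require proving that $\ch_\xi C_\lambda$ is a non-zero-divisor in $\mathbb{Z}[\xi]$ for $\lambda\notin I_1$ (equivalently, that $\sdim C_\lambda=0$ forces $\lambda\in I_1$), and it is precisely to sidestep this Clifford-algebra point that one works with the basis of $\cK(\fh)$ rather than with $\cK(\ft)$.
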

\begin{proof}
Take 
$a=\displaystyle\sum_{\lambda\in P^+(\Delta^+)} (k_{\lambda}+\xi n_{\lambda}) 
[L(\lambda)]\in\cK(\fg)$
with $k_{\lambda},n_{\lambda}\in\mathbb{Z}$ and $n_{\lambda}=0$ for $\lambda\in I_1$. It is easy to see that each 
 maximal element in the set 
$\{\lambda|\ k_{\lambda}+\xi n_{\lambda}\not=0\}$ 
 is a maximal element in
$\supp(\res^{\fg}_{\fh}(a))$; this implies the injectivity of  $\res^{\fg}_{\fh}:\cK(\fg)\to\cK(\fh)$.  The injectivity of the second map can be checked similarly.
\end{proof}

\subsection{Ring $\Ch(\fg)$}\label{RingCh}
Recall that  the rings $\Ch(\fg)$, $\Ch_{\pm}(\fg)$  are 
the images of the homomorphisms $\res^{\fg}_{\ft}:\cK(\fg)\to\cK(\ft)$
and $\res^{\fg}_{\ft}:\cK_{\pm}(\fg)\to\cK_{\pm}(\ft)$.
We set
$$\mathbb{Z}[P_0;\xi]:=\mathbb{Z}[P_0]\otimes_{\mathbb{Z}}\mathbb{Z}[\xi],\ \ \ \mathbb{Z}[\ft^*;\xi]:=\mathbb{Z}[\ft^*]\otimes_{\mathbb{Z}} \mathbb{Z}[\xi];$$
we identify $\cK(\ft)$ with
$\mathbb{Z}[\ft^*;\xi]$ and
$\cK_{\pm}(\ft)$ with $\mathbb{Z}[\ft^*]$.
We denote the homomorphisms $\psi_{\pm}(\ft)$ by $\psi_{\pm}$:
these maps are given by $\psi_{\pm}(\xi)=\pm 1$, $\psi_{\pm}(e^{\nu})=e^{\nu}$.

\subsubsection{}
Let $N$ be a finite-dimensional module and $[N]$ be its image in $\cK(\fg)$.
The map $\res^{\fg}_{\ft}:\cK(\fg)\to\cK(\ft)$ is given by 
$[N]\mapsto \ch_{\xi} N$, where
$$\ch_{\xi} N=\sum_{\nu} (\dim N_{\nu}\cap N_{\ol{0}}) e^{\nu}+
\xi \sum_{\nu} (\dim N_{\nu}\cap N_{\ol{1}}) e^{\nu}$$
and  $N_{\nu}$ is the (generalized) weight space of weight $\nu$. 
We set
$$\ch N:=\sum_{\nu\in\ft^*} \dim N_{\nu} e^{\nu}=\psi_+(\ch_{\xi} N)\ \ \ \sch N:=\sum_{\nu\in\ft^*} \sdim N_{\nu} e^{\nu}=\psi_-(\ch_{\xi} N)$$ 
and
 view  $\ch N,\sch N$ as  elements of $\mathbb{Z}[P_0]$.
By~\ref{resnikov} we have the following commutative diagrams 
$$
\xymatrix{\cK(\fg) \ar[r]^{\ch_{\xi}} \ar[d]^{\psi_{+}(\fg)}\ar[rd]^{\ch } & \Ch(\fg)\ar[d]^{\psi_{+}}  &  & & \cK(\fg) \ar[r]^{\ch_{\xi}} \ar[d]^{\psi_{-}(\fg)}\ar[rd]^{\sch } & \Ch(\fg)\ar[d]^{\psi_{-}} \\
 \cK_{+}(\fg)\ar[r] & \Ch_{+}(\fg) & & & 
           \cK_{-}(\fg)\ar[r] & \Ch_{-}(\fg) }
$$
All maps in the above diagrams are surjective.  

By~\Cor{cor0res},
one has $\cK_+(\fg)\iso \Ch_+(\fg)$ and $\cK(\fg)\iso\Ch(\fg)$
if $\fh=\ft$.
 In particular,
 $\Ch_+(\fg)$ (resp., $\Ch_-(\fg)$) 
is a $\mathbb{Z}$-span of $\ch N$ (resp., $\sch N$) with $N\in\Fin(\fg)$.

\subsubsection{Remark}
If $\fg'$ is a subalgebra of $\fg$ and
$\ft\subset\fg'$ we have 
$\res^{\fg'}_{\ft}\circ \res^{\fg}_{\fg'}=\res^{\fg}_{\ft}$ which gives
$\Ch(\fg)\subset\Ch(\fg')$.

\subsubsection{}
By~\Cor{cor0res} the ring $\Ch_+(\fg)$ is a free   $\mathbb{Z}$-module with a basis
$\ch L(\lambda)$ for $\lambda\in P^+(\Delta^+)$;
if $\fh=\ft$, then
 $\Ch_-(\fg)$ is a free   $\mathbb{Z}$-module with a basis
$\sch L(\lambda)$ for $\lambda\in P^+(\Delta^+)$.

In general, the ring  $\Ch_-(\fg)$ is spanned by 
$B_1:=\{\sch L(\lambda)|\ \lambda\in P^+(\Delta^+)\setminus I_1\}$  (since $\sch L(\lambda)=0$ if $\lambda\in I_1$). The agrument used in the proof
of~\Cor{cor0res}  shows
that the set 
$B_2:=\{\sch L(\lambda)|\ \lambda\in P^+(\Delta^+): \sdim C_{\lambda}\not=0\}$
is linearly independent. Clearly, $B_2\subset B_1$ and $B_1=B_2=\{\sch L(\lambda)|\ \lambda\in P^+(\Delta^+)\}$
if $\fh=\ft$.

\subsection{The ring $R(P_0)$}\label{RP0}
Recall that
$R(P_0):=\Ch(\fh)\cap \mathbb{Z}[P_0;\xi]$.
We set 
$$R_{\pm}(P_0):=\psi_{\pm}(R(P_0))=\Ch_{\pm}(\fh)\cap \mathbb{Z}[P_0].$$

Observe that $R(P_0)=\res^{\fh}_{\ft}\cK(\CC)$, where $\CC$ is the full
subcategory of the category $\Fin(\fh)$ consistsing of the modules
with the $\ft$-eigenvalues lying in $P_0$; since
$P_0$ is a subgroup of $\ft^*$, $\CC$ is closed under the tensor product,
so $R(P_0)$ (resp.,  $R_{\pm}(P_0)$) is a subring of $\mathbb{Z}[P_0;\xi]$
(resp., of $\mathbb{Z}[P_0]$). If $\fh=\ft$, then  $R(P_0)=\mathbb{Z}[P_0;\xi]$
and $R_{\pm}(P_0)=\mathbb{Z}[P_0]$.

\subsubsection{}\label{RP00}
Retain notation of~\ref{Clambda}.  
The ring $R(P_0)$ is a free $\mathbb{Z}$-module
with a basis 
$$\{\ch_{\xi} C_{\lambda}\}_{\lambda\in P_0}
\coprod\{\xi\ch_{\xi} C_{\lambda}\}_{\lambda\in P_0:\ \sdim C_{\lambda}\not=0}.$$
The rings $R_{\pm}(P_0)$ are free $\mathbb{Z}$-modules:
  $R_{+}(R_0)$ (resp., $R_-(P_0)$) consists 
 of the elements $\sum_{\nu\in P_0} m_{\nu} e^{\nu}$ with
$m_{\nu}$ divisible by $\dim C_{\nu}$ and $\sdim C_{\nu}$ respectively
(for $R_-(P_0)$ we have  $m_{\nu}=0$ if
 $\sdim C_{\nu}=0$).

\subsubsection{$W$-action}
The group $W$ acts on the ring $\mathbb{Z}[P_0;\xi]$ by $w(e^{\lambda})=e^{w\lambda}$ and $w\xi=\xi$.
Recall that each $w\in W$ 
acts on $\fh$ by an automorphism $\iota_w$;  the module $C_{w\lambda}$ 
is isomorphic to the $\iota_w^{-1}$-twist of $C_{\lambda}$, so
$\dim  C_{\lambda}=\dim  C_{w\lambda}$ and
$\sdim  C_{\lambda}=\sdim  C_{w\lambda}$. Therefore
$R(P_0), R_{\pm}(P_0)$ are $W$-stable.

\subsubsection{Remark}
For $\fg=\fq_n$ one has $\sdim C_{\lambda}=0$ for each $\lambda\not=0$;
this gives $\Ch_-(\fh)=\mathbb{Z}$.
Moreover, by~\cite{Cheng}, $\sch L(\lambda)=0$ for each $\lambda\not=0$.

\subsubsection{}
Since $\xi^2=1$ we have 
 the embedding 
$$\psi_+\times\psi_-: R(P_0)\to R_+(P_0)\times R_-(P_0).$$
We will use the following construction:
for any subsets  $A_{\pm}\subset R_{\pm}(P_0)$ we introduce
$$A_+\underset{R(P_0)}{\times} A_-:=\{a\in R(P_0)|\ \psi_{\pm}(a)\in A_{\pm}\}.$$
Note that  $A_+\underset{R(P_0)}{\times} A_-$ 
 is a subring of $R(P_0)$ if $A'_{\pm}$ are rings.
 
\subsubsection{}
\begin{lem}{lemunder}
Let $A$ be a $\mathbb{Z}[\xi]$-submodule of $R(P_0)$
with the following property: if $a\in R(P_0)$ and $2a\in A$, then $a\in A$.
Then
$$A=\psi_+(A)\underset{R(P_0)}{\times} \psi_-(A).$$
\end{lem}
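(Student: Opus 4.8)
The plan is to exploit the embedding $\psi_+\times\psi_-\colon R(P_0)\hookrightarrow R_+(P_0)\times R_-(P_0)$ together with the ``no $2$-divisibility obstruction'' hypothesis on $A$. The inclusion $A\subseteq \psi_+(A)\underset{R(P_0)}{\times}\psi_-(A)$ is immediate from the definitions, so the content is the reverse inclusion. First I would take $a\in R(P_0)$ with $\psi_+(a)\in\psi_+(A)$ and $\psi_-(a)\in\psi_-(A)$, and choose $a_+,a_-\in A$ with $\psi_+(a_+)=\psi_+(a)$ and $\psi_-(a_-)=\psi_-(a)$. The element $a_+ - a_-\in A$ satisfies $\psi_+(a_+-a_-)=\psi_+(a)-\psi_+(a)=0$... wait, that is not quite what I want directly; rather I want to compare $a$ against an element of $A$. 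Let me instead form $b:=a_+ + a_- \in A$ (using that $A$ is a $\mathbb{Z}[\xi]$-module, hence closed under addition) and compute $\psi_\pm$ of $2a - b$.

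The key computation: for any $c\in R(P_0)$ one has $2c = (1+\xi)c + (1-\xi)c$, and $\psi_+$ kills $(1-\xi)c$ while $\psi_-$ kills $(1+\xi)c$; more usefully, an element $c\in R(P_0)$ is recovered from its images as $c = \tfrac12\big((1+\xi)\,\widetilde{\psi_+(c)} + (1-\xi)\,\widetilde{\psi_-(c)}\big)$ where $\widetilde{(-)}$ denotes any lift (the ambiguity in the lift lies in $(1\mp\xi)R(P_0)$ and is annihilated). Concretely, since $\xi^2=1$, for $c\in R(P_0)$ we have $(1+\xi)c \in A$ depends only on $\psi_+(c)$ when... hmm, the cleanest route: set $b := (1+\xi)a_+ + (1-\xi)a_- \in A$ (again $A$ is a $\mathbb{Z}[\xi]$-submodule, so this lies in $A$). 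Then $\psi_+(b) = 2\psi_+(a_+) = 2\psi_+(a)$ and $\psi_-(b) = 2\psi_-(a_-) = 2\psi_-(a)$, so $\psi_+(b) = \psi_+(2a)$ and $\psi_-(b) = \psi_-(2a)$. Since $\psi_+\times\psi_-$ is injective on $R(P_0)$, this forces $b = 2a$. Hence $2a\in A$, and by the hypothesis on $A$ we conclude $a\in A$. This gives $\psi_+(A)\underset{R(P_0)}{\times}\psi_-(A)\subseteq A$, completing the proof.

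The main thing to verify carefully — the only place a subtlety can hide — is that $(1+\xi)a_+ + (1-\xi)a_-$ really lies in $A$: this is exactly where the hypothesis that $A$ is a $\mathbb{Z}[\xi]$-submodule (not merely a $\mathbb{Z}$-submodule) is used, since $1\pm\xi\in\mathbb{Z}[\xi]$. The $2$-divisibility hypothesis is then used exactly once, at the very end, to pass from $2a\in A$ to $a\in A$; note it is genuinely needed, since $\psi_+\times\psi_-$ is not surjective and its image meets $A$ only ``up to index-$2$ phenomena''. I would also remark that the injectivity of $\psi_+\times\psi_-$ on $R(P_0)$ used above follows from the corresponding injectivity on $\cK(\fh)$ recorded via \Cor{cor0res} and the identification $R(P_0)=\res^{\fh}_{\ft}\cK(\CC)$, or simply from $\xi^2=1$ together with $R(P_0)\subset\mathbb{Z}[P_0;\xi]=\mathbb{Z}[P_0]\otimes_{\mathbb{Z}}\mathbb{Z}[\xi]$ being free over $\mathbb{Z}$ with $\{1,\xi\}$ part of a basis pattern. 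No real obstacle is expected; this is a short diagram-chase once the $\mathbb{Z}[\xi]$-module structure is used to build the right witness.
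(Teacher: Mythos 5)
Your proof is correct and is essentially the paper's own argument: the paper chooses a witness $a_+=a-c(1-\xi)\in A$ for $\psi_+(a)$, multiplies by $1+\xi$ to get $(1+\xi)a\in A$, does the symmetric thing for $\psi_-$, adds to obtain $2a\in A$, and then applies the $2$-divisibility hypothesis. You package the same two steps as $b=(1+\xi)a_++(1-\xi)a_-$ and identify $b=2a$ via injectivity of $\psi_+\times\psi_-$, which is a cosmetic rephrasing of the identical mechanism.
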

\begin{proof}
Take 
$a\in R(P_0)$ such that $\psi_{\pm}(a)\in \psi_{\pm}(A)$. Then $A$ contains
$a-c(1-\xi)$  for some $c\in R(P_0)$.
Since $A$ is a  $\mathbb{Z}[\xi]$-submodule, $A$ contains 
$(1+\xi)(a-c(1-\xi))=(1+\xi)a$. Similarly, $A$ contains $(1-\xi)a$, so
 $2a\in A$. Then the assumption gives $a\in A$ as required.
\end{proof}

\subsection{Rings $\cA(\fg)$ and $\Ch(\fg)$}\label{RingsACh}
Recall that $\Ch_{\pm}(\fg)=\psi_{\pm}(\Ch(\fg))$ and that
$$\cA(\fg):=\bigcap_{\beta\in\Delta_{iso}}\{\sum_{\nu} m_{\nu} e^{\nu}\in R(P_0)^W\ |\   \langle\nu,h_{\beta}\rangle\not=0\ \Longrightarrow\ 
 \sum_{i\in\mathbb{Z}} 
(-\xi)^i  m_{\nu+i\beta}=0\}.$$
Setting $\cA_{\pm}(\fg):=\psi_{\pm}(\cA(\fg))$ we have 
\begin{equation}\label{Apm}
\cA_{\pm}(\fg):=\bigcap_{\beta\in\Delta_{iso}}\{\sum_{\nu} m_{\nu} e^{\nu}\in R_{\pm}(P_0)^W\ |\   \langle\nu,h_{\beta}\rangle\not=0\ \Longrightarrow\ 
 \sum_{i\in\mathbb{Z}} 
(\mp 1)^i  m_{\nu+i\beta}=0\}\end{equation}
(recall that  $R_{\pm}(P_0)=\mathbb{Z}[P_0]$ if $\fh=\ft$).

\subsubsection{}
\begin{cor}{corA+-Ch+-}
$\cA(\fg)=\cA_+(\fg)\underset{R(P_0)}{\times}\cA_-(\fg)\ $ and
$\ \Ch(\fg)=\Ch_+(\fg)\underset{R(P_0)}{\times}\Ch_-(\fg)$.
\end{cor}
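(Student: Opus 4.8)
The claim is a pair of equalities of the form $M = \psi_+(M)\underset{R(P_0)}{\times}\psi_-(M)$, one for $M=\cA(\fg)$ and one for $M=\Ch(\fg)$. Both will follow from \Lem{lemunder}, so the plan is to verify, for each of the two rings, the two hypotheses of that lemma: that $M$ is a $\mathbb{Z}[\xi]$-submodule of $R(P_0)$, and that it is ``$2$-saturated'' in the sense that $a\in R(P_0)$ and $2a\in M$ force $a\in M$.

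For $\Ch(\fg)$: it is a subring of $R(P_0)$ (this is recorded in the introduction and will be established in \Prop{lem-infty}), hence in particular a $\mathbb{Z}[\xi]$-submodule since $\xi\in\Ch(\fg)$ (it is the class of $\Pi\triv$). For $2$-saturation, suppose $a\in R(P_0)$ with $2a\in\Ch(\fg)$. I would pass to the embedding $\psi_+\times\psi_-\colon R(P_0)\hookrightarrow R_+(P_0)\times R_-(P_0)$ and use the explicit $\mathbb{Z}$-bases: $\Ch_+(\fg)$ is free on $\{\ch L(\lambda)\}_{\lambda\in P^+(\Delta^+)}$ and $\Ch_-(\fg)$ is spanned by the linearly independent set $B_2=\{\sch L(\lambda)\mid \lambda\in P^+(\Delta^+),\ \sdim C_\lambda\neq 0\}$ (both from \S\ref{RingCh}). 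Write $2\psi_+(a)=\sum k_\lambda \ch L(\lambda)$ and $2\psi_-(a)=\sum n_\lambda\sch L(\lambda)$; because $\res^{\fg}_{\fh}$ is injective (\Cor{cor0res}) and the leading-term argument there matches up the coefficients, the element $b:=\sum_{\lambda}(k_\lambda + \xi n_\lambda)[L(\lambda)]/2$ lies in $\cK(\fg)_{\mathbb{Q}}$ and maps to $a$; one then checks each $k_\lambda, n_\lambda$ is even by comparing maximal weights, so $b\in\cK(\fg)$ and $a=\ch_\xi b\in\Ch(\fg)$. (A cleaner route: since $\res^{\fg}_{\ft}$ induces a bijection $\cK(\fg)_{\mathbb{Q}}\iso\cK_+(\fg)_{\mathbb{Q}}\times\cK_-(\fg)_{\mathbb{Q}}$ matched with the analogous statement for $R(P_0)$, $2$-saturation of $\Ch(\fg)$ inside $R(P_0)$ reduces to $2$-saturation of $\cK(\fg)$ inside its rationalization, which is immediate from the freeness of the bases in \S\ref{basisK}.)

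For $\cA(\fg)$: it is visibly a $\mathbb{Z}[\xi]$-submodule of $R(P_0)^W\subset R(P_0)$, since the defining conditions $\sum_i(-\xi)^i m_{\nu+i\beta}=0$ are $\mathbb{Z}[\xi]$-linear in the coefficient family $(m_\nu)$. For $2$-saturation, if $a\in R(P_0)$ and $2a\in\cA(\fg)$, then $2a$ is $W$-invariant, hence so is $a$ (as $R(P_0)$ is torsion-free, being free over $\mathbb{Z}$ by \S\ref{RP00}), and likewise each linear relation $\sum_i(-\xi)^i\cdot 2m_{\nu+i\beta}=0$ gives $\sum_i(-\xi)^i m_{\nu+i\beta}=0$ because $\mathbb{Z}[\xi]$ is torsion-free; thus $a\in\cA(\fg)$. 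Then \Lem{lemunder} applies and gives $\cA(\fg)=\cA_+(\fg)\underset{R(P_0)}{\times}\cA_-(\fg)$, using $\cA_\pm(\fg)=\psi_\pm(\cA(\fg))$ by definition.

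\textbf{Main obstacle.} The only non-formal point is the $2$-saturation of $\Ch(\fg)$: one must know that halving a character of a genuine module (viewed abstractly in $R(P_0)$) again lands in $\Ch(\fg)$, i.e.\ comes from a module. This is where the structure theory of \S\ref{basisK}--\S\ref{RingCh} is essential — it is precisely the freeness of $\cK(\fg)$ and $\cK_+(\fg)$ on the $[L(\lambda)]$, together with the injectivity in \Cor{cor0res}, that lets one recover the module-theoretic class from the numerical character. For $\cA(\fg)$ everything is soft linear algebra over the torsion-free ring $\mathbb{Z}[\xi]$.
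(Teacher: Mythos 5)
Your high-level strategy coincides with the paper's: apply \Lem{lemunder} to both $\cA(\fg)$ and $\Ch(\fg)$. Your verification of the hypotheses for $\cA(\fg)$ is correct and usefully makes explicit what the paper leaves to the reader (the defining conditions are $\mathbb{Z}[\xi]$-linear in the $m_\nu$, and $R(P_0)$ is torsion-free). You also correctly identify the only non-formal point as $2$-saturation of $\Ch(\fg)$ in $R(P_0)$.

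However, both routes you offer for that last step have real gaps. In the first route, the element $b:=\sum_{\lambda}(k_\lambda+\xi n_\lambda)[L(\lambda)]/2$ does not map to $a$: since $\psi_+(\xi)=1$, one has $\psi_+(\ch_\xi b)=\tfrac12\sum(k_\lambda+n_\lambda)\ch L(\lambda)$, which differs from $\psi_+(a)=\tfrac12\sum k_\lambda\ch L(\lambda)$ unless all $n_\lambda=0$; moreover, when $\fh\neq\ft$ the set $\{\sch L(\lambda)\}$ need not be a basis of $\Ch_-(\fg)$, so the $n_\lambda$ are not even well-defined. The ``cleaner route'' is also not valid as stated: $2$-saturation of $\Ch(\fg)$ inside $R(P_0)$ is a property of the inclusion, not of $\cK(\fg)$ alone, and cannot follow merely from freeness of $\cK(\fg)$ (consider $2\mathbb{Z}\subset\mathbb{Z}$, where the source is free yet the image is not $2$-saturated). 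The indispensable ingredient, which the paper supplies and your sketch only gestures at, is the direct maximal-weight comparison inside $R(P_0)$: write $2a=\sum_{\lambda\in P^+(\Delta^+)}m_\lambda\,\ch_\xi L(\lambda)$ with $m_\lambda\in\mathbb{Z}[\xi]$, normalize (by subtracting off $\tfrac12 m_\lambda\,\ch_\xi L(\lambda)$ whenever $m_\lambda\in 2\mathbb{Z}[\xi]$) so that no nonzero $m_\lambda$ lies in $2\mathbb{Z}[\xi]$, take $\lambda$ maximal in the support, and observe that the coefficient of $e^\lambda$ in $2a$ is $m_\lambda\,\ch_\xi C_\lambda$; the description of $R(P_0)$ in \S\ref{RP00} then forces $m_\lambda\in 2\mathbb{Z}[\xi]$, a contradiction. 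This is the step your proposal needs to carry out explicitly.
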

\begin{proof}
For $\cA(\fg)$ the assumption of~\Lem{lemunder} hold.
The ring $\Ch(\fg)$ is a $\mathbb{Z}[\xi]$-submodule of $R(P_0)$.
It remains to verify that for
 $a\in R(P_0)$ with $2a\in \Ch(\fg)$ one has $a\in \Ch(\fg)$.
Write 
$2a=\sum_{\lambda\in P^+(\Delta^+)} m_{\lambda} \ch_{\xi} L(\lambda)$
where $m_{\lambda}\in \mathbb{Z}[\xi]$; without loss of generality
we may assume that
$m_{\lambda}\not\in 2\mathbb{Z}[\xi]$ for all $\lambda$. Let $\lambda$
be maximal such that $m_{\lambda}\not=0$. 
The coefficient of $e^{\lambda}$ in $2a$
is equal to $m_{\lambda}\ch_{\xi} C_{\lambda}$. Using~\ref{RP00}
we obtain $a\not\in R(P_0)$, a contradiction.
\end{proof}

\subsubsection{Remark}
The algebras $\cA_+(\fg)$ and $\cA_-(\fg)$ can be very different:
for example, for $\fq_n$, $\cA_-(\fg)=\mathbb{Z}$ whereas
$\cA_+(\fg)$ is a free $\mathbb{Z}$-module of infinite rank.
For Kac-Moody superalgebras these algebras are not so different, see~\ref{A+-iso} below.

\subsubsection{Remark}
If $\Delta_{iso}$ is empty, then $\cA(\fg)=R(P_0)^W$. If $\Delta_{iso}$ is non-empty and  the group generated by $W$ and $-\Id$ acts transitively on
$\Delta_{iso}$, then for any $\beta\in\Delta_{iso}$ we have
$$
\cA(\fg)=\{\sum_{\nu} m_{\nu} e^{\nu}\in R(\fg)^W\ |\   \langle\nu,h_{\beta}\rangle\not=0\ \Longrightarrow\ 
 \sum_{i\in\mathbb{Z}} 
(-\xi)^i m_{\nu+i\beta}=0\}.
$$
Note that the above condition
($W$ acts transitively on
$\Delta_{iso}/\{\pm 1\}$) holds for the Kac-Moody, $Q$-type and
$P$-type superalgebras.

%
%
%
%
%
%\section{The formula $\Ch(\fg)=\cA(\fg)$ for  the finite-dimensional Kac-Moody 
%and $Q$-type superalgebras.}\label{sect1}
%In this section we show that $\Ch(\fg)\subset\cA(\fg)$ and
%that
%$\Ch(\fg)=\cA(\fg)$ for the finite-dimensional Kac-Moody superalgebras
%and $Q$-type superalgebras. We show that for a suitable base $\Sigma$ 
%the ring $\Ch(\fg)$ admits a ``short basis'', see~\ref{short} for definition.
%

\subsection{Case $\fgl(1|1)$}\label{gl11}
For the Lie superalgebra $\fgl(1|1)$ the algebra $\fg_{\ol{0}}$
is a two-dimensional commutative Lie algebra and
$\fg_{\ol{0}}=\fh=\ft$. One has $\Delta=\Delta_{iso}=\{\pm\beta\}$. 
Since $\beta\in\Delta_{iso}$ and $\dim \ft=2$ we have $\{\nu|\ \langle\nu,h_{\beta}\rangle=0\}=\mathbb{C}\beta$
A module $L(\lambda)$ is one-dimensional if $\lambda\in\mathbb{C}\beta$ and 
is   two-dimensional with $\ch_{\xi} L(\lambda)=e^{\lambda}(1+\xi e^{-\beta})$
if $\lambda\not\in\mathbb{C}\beta$.   Using~\ref{basisK} we obtain 
$$\Ch(\fg)=\{\sum m_{\nu}e^{\nu}|\ 
\sum_{i\in\mathbb{Z}} 
(-\xi)^i  m_{\nu+i\beta}=0\ \text{ for }\nu\not\in\mathbb{C}\beta\}=\cA(\fg).$$

\subsection{}
\begin{prop}{lem-infty}
For a quasireductive Lie superalgebra $\fg$ we have
 $\Ch(\fg)\subset \cA(\fg)$.
\end{prop}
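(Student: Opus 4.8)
The plan is to reduce the claim $\Ch(\fg)\subset\cA(\fg)$ to two separate containments: first $\Ch(\fg)\subset R(P_0)^W$, and second, for each $\beta\in\Delta_{iso}$, that every element of $\Ch(\fg)$ satisfies the vanishing condition $\sum_{i\in\mathbb{Z}}(-\xi)^i m_{\nu+i\beta}=0$ whenever $\langle\nu,h_\beta\rangle\neq 0$. Both containments should follow by restricting to suitable subalgebras and using the already-established results about $\fgl(1|1)$ and about the injectivity/structure of the $\cK$- and $\Ch$-rings.

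For the $W$-invariance: any finite-dimensional $\fg$-module $N$ is in particular a finite-dimensional $\fg_{\ol 0}$-module, and $\fg_{\ol 0}$ is reductive, so its $\ft$-character is $W$-invariant; hence $\ch_\xi N\in\mathbb{Z}[P_0;\xi]^W$, and since $\Ch(\fg)\subset R(P_0)$ already (it lands in $\cK(\ft)$ and all weights lie in $P_0$), we get $\Ch(\fg)\subset R(P_0)^W$. I would spell this out by noting $\res^{\fg}_{\ft}=\res^{\fg_{\ol 0}}_{\ft}\circ\res^{\fg}_{\fg_{\ol 0}}$ and that the image of $\cK(\fg_{\ol 0})$ in $\cK(\ft)$ is $W$-invariant by the classical reductive-group statement recalled in the introduction.

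For the $\Delta_{iso}$-condition: fix $\beta\in\Delta_{iso}$. As observed in~\ref{setio}, there is a subalgebra $\fs\cong\fgl(1|1)$ inside $\fg$ with root $\pm\beta$ (take the $\fsl(1|1)$ spanned by $e_\pm,h_\beta$ and adjoin an $h'\in\ft$ with $\langle\beta,h'\rangle\neq 0$; one may in fact take all of $\ft$ or a two-dimensional piece of it on which $\beta$ is nonzero, so that $\ft\cap\fs$ plays the role of the Cartan of $\fgl(1|1)$). Using the Remark after~\ref{RingCh}, $\res^{\fg}_{\fs}$ gives $\Ch(\fg)\subset\Ch(\fs)$ after identifying characters compatibly, and by~\ref{gl11} we know $\Ch(\fgl(1|1))=\cA(\fgl(1|1))$ is exactly cut out by the relations $\sum_i(-\xi)^i m_{\nu+i\beta}=0$ for $\nu$ not annihilated by $h_\beta$. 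The main technical point — and the step I expect to be the main obstacle — is bookkeeping the weight restriction: an element of $\Ch(\fg)$ is a sum $\sum_\nu m_\nu e^\nu$ over $\nu\in\ft^*$, and restricting to $\fs$ collapses the $e^\nu$ according to $\nu\mapsto \nu|_{\ft\cap\fs}$; one must check that the coefficient of a given $\fgl(1|1)$-weight in the restriction is exactly $\sum$ of the $m_\nu$ over the relevant coset, so that the $\fgl(1|1)$-relation, pulled back, yields precisely the stated relation on the $m_{\nu+i\beta}$ for each affine line $\nu+\mathbb{Z}\beta$ with $\langle\nu,h_\beta\rangle\neq 0$. This is essentially the computation that the $\fs$-string through $\nu$ consists of the weights $\nu+i\beta$, together with the fact that $\langle\nu,h_\beta\rangle\neq 0$ is detected on $\fs$. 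Once this is in place, intersecting over all $\beta\in\Delta_{iso}$ and combining with the $W$-invariance gives $\Ch(\fg)\subset\cA(\fg)$.

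A clean way to organize the second part without re-deriving the $\fgl(1|1)$ computation is: for the module $N$, decompose $\Res^{\fg}_{\fs}N$ into $\fs$-composition factors; each factor is either one-dimensional (weight in $\mathbb{C}\beta$, contributing nothing to the relevant strings) or a two-dimensional $L_{\fs}(\mu)$ with $\ch_\xi L_{\fs}(\mu)=e^\mu(1+\xi e^{-\beta})$; summing, the coefficient function $m$ of $\ch_\xi N$, restricted to any string $\{\nu+i\beta\}$ with $\langle\nu,h_\beta\rangle\neq 0$, is a $\mathbb{Z}[\xi]$-combination of the functions $i\mapsto [i=k]+\xi[i=k-1]$, each of which is visibly killed by $\sum_i(-\xi)^i(-)$; hence so is $m$. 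I would present this argument, since it makes the reduction to $\fgl(1|1)$ transparent and isolates exactly where $\xi^2=1$ is used.
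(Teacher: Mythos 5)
Your proof follows essentially the same approach as the paper's: first establish $\Ch(\fg)\subset R(P_0)^W$ via restriction to $\fg_{\ol 0}$ (reductive) and to $\fh$, then for each $\beta\in\Delta_{iso}$ restrict to the $\fgl(1|1)$-type subalgebra containing $\ft$ and invoke the $\fgl(1|1)$ computation from~\ref{gl11}. The only thing to tighten is that the restriction subalgebra should be taken to contain \emph{all} of $\ft$ (i.e.\ $\fl=\fgl(1|1)\times\ft'$ as in the paper), so that the $\fl$-weights and $\ft$-weights coincide and no collapsing of $e^\nu$'s occurs; your phrasing waffles between this and a smaller $\fgl(1|1)$ with a $2$-dimensional Cartan, and in the latter case the one-dimensional composition factors have weight with $\langle\nu,h_\beta\rangle=0$ rather than $\nu\in\mathbb{C}\beta$ — a minor bookkeeping slip, not a real gap.
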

\begin{proof}
Since $\fg_{\ol{0}}$ is a reductive Lie algebra one has 
$\Ch(\fg_{\ol{0}})=\mathbb{Z}[P_0;\xi]^W$.
The embeddings $\fg_{\ol{0}}\subset\fg$, $\fh\subset\fg$ give
$\Ch(\fg)\subset \Ch(\fg_{\ol{0}})$ and
 $\Ch(\fg)\subset \Ch(\fh)$; thus 
$$\Ch(\fg)\subset \Ch(\fg_{\ol{0}})\cap\Ch(\fh)=\mathbb{Z}[P_0;\xi]^W\cap \Ch(\fh)=R(P_0)^W.$$

Fix $\beta\in\Delta_{iso}$. By~\ref{setio}
the space $\fg_{\beta}+\ft+\fg_{-\beta}$ contains
a quasireductive subalgebra 
$$\fl:=\fgl(1|1)\times\ft'$$
and $\ft\subset\fl$ is the Cartan subalgebra of $\fl$.
View $\Ch(\fl)$
as a subring in $\mathbb{Z}[\ft^*;\xi]$. By~\ref{gl11}
 $\sum m_{\nu} e^{\nu}\in \Ch(\fl)$ implies $\sum_{\in\mathbb{Z}}  (-\xi)^i  m_{\nu+i\beta}=0$ if 
$\langle\nu,h_{\beta}\rangle\not=0$.
Since 
$\Ch(\fg)\subset  \Ch(\fl)$ we obtain  $\Ch(\fg)\subset \cA(\fg)$.
\end{proof}

\section{Short basis}\label{sect1}
In this section we show that  $\cA(\fg)=\Ch(\fg)$ and this ring admits
a ``short basis''  if $\fg$ is a 
Kac-Moody superalgebras $\fg\not=\fgl(1|1)$  or a $Q$-type
superalgebra (see Corollaries~\ref{thmSV},\ref{corglnn}).
Recall that $\cA(\fgl(1|1))=\Ch(\fgl(1|1))$; it is easy to see
that this ring does not admit a ``short basis''.

\subsection{Definition}\label{short}
We call a  $\mathbb{Z}$-basis $B\coprod \xi B'$ of $\Ch(\fg)$ {\em short with respect to $\Delta^+$}
if 
\begin{equation}\label{eqshort}\begin{array}{lll}
\text{(a) }& & B:=\{b_{\lambda}|\ \lambda\in P^+(\Delta^+)\},\ \ \ \ 
B':=\{b_{\lambda}|\ \sdim C_{\lambda}\not=0\};\\
\text{(b)} & &
\supp( b_{\lambda}-\ch_{\xi} C_{\lambda})\cap  P^+(\Delta^+)=\emptyset;\\
\text{(c) }& & \xi b_{\lambda}=b_{\lambda}\ \text{  if } \sdim C_{\lambda}=0;\\
\text{(d)} & & \supp(b_{\lambda})\subset\{\nu\in\ft^*|\ \nu\leq\lambda\}.\end{array}
\end{equation}

\subsection{Properties of short bases}
Assume that $\Ch(\fg)$ admits a short basis.
\subsubsection{}\label{01}
It is not hard to show (see, for example,~\cite{GSS}):
that 
$\dim C_{\lambda}=1$ or  $\sdim  C_{\lambda}=0$. Therefore 
$B'=\{b_{\lambda}|\ \dim C_{\lambda}=1\}$.

\subsubsection{}\label{mla}
Since $\ch_{\xi} L(\lambda)\in \Ch(\fh)\cap \mathbb{Z}[P_0;\xi]=R[P_0]$, for
 each $\nu$ we have
$\ch_{\xi} L(\lambda)_{\nu}=m_{\lambda,\nu}\ch_{\xi} C_{\nu}$
for some $m_{\lambda,\nu}\in\mathbb{Z}[\xi]$ with $m_{\lambda,\nu}\in\mathbb{Z}$
if $\sdim C_{\nu}=0$.
 The property (b) gives
$$\ch_{\xi} L(\lambda)=
b_{\lambda}+\sum_{\nu\in  P^+(\Delta^+)}m_{\lambda,\nu}b_{\nu}.$$

Set  $b^{\pm}_{\nu}:=\psi_{\pm}(b_{\nu})$; then $b^{\pm}_{\nu}\in\mathbb{Z}[P_0]$.
By (b), (c) we get $b^-_{\nu}\not=0$ if and only if
$b_{\nu}\in B'$. Moreover, the sets $\{\psi_+(b)|\ b\in B\}$  
and $\{\psi_-(b)|\ b\in B'\}$
form $\mathbb{Z}$-bases of   $\Ch_{\pm}(\fg)$ and
$$\ch L(\lambda)=\sum \frac{\dim L(\lambda)_{\nu}}{\dim C_{\nu}}\  b^+_{\nu},\ \ \ \ \ 
\sch L(\lambda)= \sum_{\nu: \dim C_{\nu}=1} \sdim L(\lambda)_{\nu}\  b^-_{\nu}.$$
(for the second formula we used 
$\sdim  C_{\lambda}\in\{0,1\}$).

\subsection{Set $Y_{\lambda}$}\label{exitsshort}
For each $\lambda$ we set
$$Y_{\lambda}:=\{\mu\in P^+(\pi)|\ \mu<\lambda\}.$$
If the sets $Y_{\lambda}$ are finite, the 
elements
 $b_{\lambda}$ satisfying the property (b) 
can be constructed by the induction on the cardinality of $Y_{\lambda}$ via the formula
$b_{\lambda}:=\ch L(\lambda)-\sum_{\nu\in Y_{\lambda}\cap P^+(\Delta^+)} 
m_{\lambda,\nu} b_{\nu}$ where $m_{\lambda,\nu}$ are as in~\ref{mla}. 
Note that $b_{\lambda}$ satisfies
the property (d).

\subsubsection{}
\begin{lem}{lemele1}
If
$(-\mathbb{R}_{\geq 0}\Delta^+)\cap \mathbb{R}_{\geq 0}P^+(\pi)=\{0\}$, then
 $Y_{\lambda}$ is finite
 for each $\lambda$.\end{lem}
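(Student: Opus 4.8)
The statement asserts that under the hypothesis $(-\mathbb{R}_{\geq 0}\Delta^+)\cap \mathbb{R}_{\geq 0}P^+(\pi)=\{0\}$, the set $Y_{\lambda}=\{\mu\in P^+(\pi)\,|\,\mu<\lambda\}$ is finite for every $\lambda$. The plan is to argue by a compactness/cone-separation argument. Recall $\mu<\lambda$ means $\lambda-\mu\in\mathbb{N}\Delta^+$, so $\mu=\lambda-\sum_{\alpha\in\Delta^+}n_\alpha\alpha$ with $n_\alpha\in\mathbb{N}$; equivalently $\mu\in\lambda-\mathbb{N}\Delta^+\subset\lambda+(-\mathbb{R}_{\geq 0}\Delta^+)$. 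At the same time $\mu\in P^+(\pi)\subset\mathbb{R}_{\geq 0}P^+(\pi)$ (indeed $P^+(\pi)$ is closed under addition, being the dominant cone intersected with the relevant lattice). So $Y_\lambda$ lies in the intersection of the affine cone $\lambda-\mathbb{R}_{\geq 0}\Delta^+$ with the cone $C:=\mathbb{R}_{\geq 0}P^+(\pi)$.

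First I would reduce finiteness of $Y_\lambda$ to boundedness: since $Y_\lambda\subset P_0$ and $P_0$ is a lattice (or at least a discrete subgroup) in the real span of $\Delta$, a bounded subset of $Y_\lambda$ is finite, so it suffices to show $Y_\lambda$ is a bounded subset of $\ft^*$. Next, the key step: I claim the set $S:=\{x\in C\,|\,x\in\lambda-\mathbb{R}_{\geq 0}\Delta^+\}$ is compact. It is clearly closed. For boundedness, suppose not; then there is a sequence $x_k\in S$ with $\|x_k\|\to\infty$. Normalize: $x_k/\|x_k\|$ lies in the unit sphere, pass to a convergent subsequence with limit $v$, $\|v\|=1$. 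Since $C$ is a closed cone, $v\in C$. Write $x_k=\lambda-y_k$ with $y_k\in\mathbb{R}_{\geq 0}\Delta^+$; then $y_k/\|x_k\|=(\lambda-x_k)/\|x_k\|=\lambda/\|x_k\|-x_k/\|x_k\|\to -v$, and since $\mathbb{R}_{\geq 0}\Delta^+$ is a closed cone, $-v\in\mathbb{R}_{\geq 0}\Delta^+$, i.e. $v\in-\mathbb{R}_{\geq 0}\Delta^+$. Thus $v\in(-\mathbb{R}_{\geq 0}\Delta^+)\cap C=\{0\}$ by hypothesis, contradicting $\|v\|=1$. Hence $S$ is bounded, therefore compact, and $Y_\lambda\subset S\cap P_0$ is finite.

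The one technical point to nail down is that $\mathbb{R}_{\geq 0}\Delta^+$ and $C=\mathbb{R}_{\geq 0}P^+(\pi)$ are genuinely closed cones, so that the limits of normalized sequences stay inside them; this is standard since both are finitely generated cones (finitely many roots, and $P^+(\pi)$ spanned over $\mathbb{R}_{\geq 0}$ by the finitely many fundamental dominant weights together with the center), and finitely generated cones in a finite-dimensional vector space are closed. I also need that $P^+(\pi)$ itself sits inside $C$, which is immediate, and that $P_0$ is discrete, which holds because it is a finitely generated subgroup of $\ft^*$ whose real span need not be all of $\ft^*$ but is finite-dimensional — restricting attention to that span makes discreteness clear. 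The main obstacle is really just assembling the cone-theoretic facts cleanly; the geometric heart is the separation contradiction above, which uses hypothesis (Pr1) exactly once.
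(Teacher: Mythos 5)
Your boundedness argument — show that $S=\mathbb{R}_{\geq 0}P^+(\pi)\cap(\lambda-\mathbb{R}_{\geq 0}\Delta^+)$ is bounded by normalizing an escaping sequence and landing a unit direction $v$ in the intersection $(-\mathbb{R}_{\geq 0}\Delta^+)\cap\mathbb{R}_{\geq 0}P^+(\pi)$ — is correct and is mathematically the same argument as the paper's (the paper phrases it via ``an unbounded polyhedron contains a ray'' rather than sequential compactness, but the recession-direction idea is identical).

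The gap is in the final step, where you pass from boundedness to finiteness by asserting that $P_0$ is a discrete, finitely generated subgroup of $\ft^*$. This is false in the quasireductive generality of this lemma: $P_0$ is the $\mathbb{Z}$-span of $P^+(\pi)$, and when $\fg_{\ol{0}}$ is reductive but not semisimple (e.g.\ $\fg_{\ol{0}}=\fgl_n$, as for the $Q$-type superalgebras), $P^+(\pi)$ contains a full continuum of central characters, so $P_0$ is uncountable and certainly not finitely generated or discrete. The paper itself flags this explicitly right after introducing $P_0$: ``($P_0$ is a lattice if $\fg_{\ol{0}}$ is semisimple).'' What is discrete is not $P_0$ but the set $Y_{\lambda}$ itself, because $Y_{\lambda}\subset\lambda-\mathbb{N}\Delta^+$. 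To close the gap you should argue as the paper does: write $\mu=\lambda-\sum_{\alpha\in\Delta^+}k_{\alpha}\alpha$ with $k_{\alpha}\in\mathbb{N}$, pick a defining element $h\in\ft$ for $\Delta^+$ so that $u:=\min_{\alpha\in\Delta^+}\Ree\langle\alpha,h\rangle>0$; then boundedness of $Y_{\lambda}$ bounds $\Ree\langle\lambda-\mu,h\rangle\geq u\sum k_{\alpha}$, hence bounds each $k_{\alpha}$, hence leaves finitely many tuples $(k_{\alpha})$ and finitely many $\mu$. (Alternatively, observe $Y_\lambda\subset\lambda+\mathbb{Z}\Delta$ and argue $\mathbb{Z}\Delta$ is a lattice; but bounding the $k_\alpha$ directly is cleaner and avoids any question about whether $\mathbb{Z}\Delta$ is discrete.)
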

\begin{proof}
View $\lambda-\mathbb{R}\Delta^+$ as an affine space. Let us show that the set
$$M:=(\lambda-\mathbb{R}_{\geq 0}\Delta^+)\cap \mathbb{R}_{\geq 0}P^+(\pi)$$
is bounded. Both $\lambda-\mathbb{R}_{\geq 0}\Delta^+$ and $ 
\mathbb{R}_{\geq 0}P^+(\pi)$ are polyhedra (i.e.,  
the intersection of finitely many half-spaces), so $M$ is 
a polyhedron as well. Assume that $M$ is unbounded. Then $M$ contains 
a ray, that is for some $\nu\in M$ and a non-zero $\gamma\in\mathbb{R}\Delta^+$
one has $\nu-x\gamma\in M$ for all $x\geq 0$.
Since $\nu-x\gamma\subset \lambda-\mathbb{R}_{\geq 0}\Delta^+$ we get
$$\gamma\in \frac{\nu-\lambda}{x}+\mathbb{R}_{\geq 0}\Delta^+\ \ \text{ for all } x>0.$$
Since $\mathbb{R}_{\geq 0}\Delta^+$ is closed, we obtain 
$\gamma\in \mathbb{R}_{\geq 0}\Delta^+$.
Similarly, $\nu-x\gamma\in\mathbb{R}_{\geq 0}P^+(\pi)$ implies
$\gamma\in -\mathbb{R}_{\geq 0}P^+(\pi)$.  Thus
$\gamma\in -\mathbb{R}_{\geq 0}P^+(\pi)\cap \mathbb{R}_{\geq 0}\Delta^+$
and the assumption gives $\gamma=0$, a contradiction.
Hence $M$ is bounded.

Let $h\in\ft$ be a ``defining element for $\Delta^+$'', i.e.
$\Ree \langle\alpha,h\rangle\not= 0$ for all 
$\alpha\in\Delta$ and
$\alpha\in\Delta^+$ if and only if $\Ree\langle\alpha,h\rangle>0$.
Set 
$$a_-:=\min_{\mu\in M}  \Ree\langle \mu,h\rangle,\ \ 
 \  a_+:=\max_{\mu\in M} \Ree\langle \mu,h\rangle,\ \ \ 
u:=\min_{\alpha\in\Delta^+} \Ree\langle\alpha,h\rangle.$$
Any $\mu\in Y_{\lambda}$ is of the form  
$\mu=\sum_{\alpha\in\Delta^+} \lambda-k_{\alpha}\alpha$  for some $k_{\alpha}\in\mathbb{N}$.
Since $\mu\in Y_{\lambda}\subset M$ one has
$a_-\leq u\sum_{\alpha\in\Delta^+}  k_{\alpha}\leq a_+$.
Thus each $k_{\alpha}$ has finitely many possible values, so
 $Y_{\lambda}$ is finite.
\end{proof}

\subsubsection{Remark}\label{remele}
Since $\fg_{\ol{0}}$ is reductive, one has $-w_0\pi=\pi$
for some $w_0\in W$. We claim that 
\begin{equation}\label{remele}
-w_0\Delta^+=\Delta^+\ \ \Longrightarrow\ \ \ (-\mathbb{R}_{\geq 0}\Delta^+)\cap \mathbb{R}_{\geq 0}P^+(\pi)=\{0\}.\end{equation}
Indeed, assume that  $-w_0\Delta^+=\Delta^+$. If $\nu$ is a non-zero element of
 $(-\mathbb{R}_{\geq 0}\Delta^+)$, then
 $w_0\nu\in \mathbb{R}_{\geq 0}\Delta^+$ and so $w_0\nu-\nu$
 is a non-zero element in $\mathbb{R}_{\geq 0}\Delta^+$. By~\Lem{lem-1} we obtain
  $\nu\not\in\mathbb{R}_{\geq 0}P^+(\pi)$.

\subsection{}
\begin{lem}{cor0}
Assume  that 
\begin{itemize}

\item $ \mathbb{R}_{\geq 0}P^+(\pi)\cap (-\mathbb{R}_{\geq 0}\Delta^+)=\{0\}$;

\item
for any non-zero $y\in \cA(\fg)$ one has $\supp(y)\cap P^+(\Delta^+)\not=\emptyset$.
\end{itemize} 
Then $\cA(\fg)=\Ch(\fg)$ and $\Ch(\fg)$  admits a unique short $\mathbb{Z}$-basis
with respect to $\Delta^+$.
\end{lem}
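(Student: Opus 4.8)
The plan is to exploit the two hypotheses together with everything already set up about short bases in Sections~\ref{ringK} and~\ref{sect1}. First I would observe that the first hypothesis, via~\Lem{lemele1}, guarantees that all sets $Y_\lambda$ are finite, so by the construction in~\ref{exitsshort} there exist elements $b_\lambda$ (for $\lambda\in P^+(\Delta^+)$) satisfying properties (b) and (d) of~\ref{short}; defining $B:=\{b_\lambda\mid \lambda\in P^+(\Delta^+)\}$ and $B':=\{b_\lambda\mid \sdim C_\lambda\neq 0\}$ and setting $\xi b_\lambda:=b_\lambda$ when $\sdim C_\lambda=0$ to force (c), one gets a candidate short basis. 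The triangularity in (b) relative to the basis $\{\ch_\xi L(\lambda)\}$ of $\cK(\fg)\cong\Ch(\fg)$ (here I use \Cor{cor0res} and the discussion following it, noting that the relevant statements hold without assuming $\fh=\ft$ because of the argument in~\Cor{corA+-Ch+-}) shows that $B\coprod\xi B'$ is indeed a $\mathbb{Z}$-basis of $\Ch(\fg)$; so $\Ch(\fg)$ admits a short basis, and uniqueness follows because property (b) pins down each $b_\lambda$ uniquely once we know $\ch_\xi L(\lambda)$.

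Next, for the equality $\cA(\fg)=\Ch(\fg)$: by \Prop{lem-infty} we already have $\Ch(\fg)\subset\cA(\fg)$, so I only need the reverse inclusion. Take a nonzero $y\in\cA(\fg)$. Among the elements of $\supp(y)$ that lie in $P^+(\Delta^+)$ — nonempty by the second hypothesis — I would like to pick a maximal one with respect to the partial order $>$; for this I need that $\supp(y)$ has maximal elements in $P^+(\Delta^+)$, which follows because $y\in R(P_0)^W\subset\mathbb{Z}[P_0;\xi]$ and all weights of $y$ are $\le$ some dominant weight (using \Lem{lem-1} and the finiteness of $Y_\lambda$ again to see the relevant down-sets are finite). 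Let $\lambda$ be such a maximal element and let $m_\lambda\in\mathbb{Z}[\xi]$ be the coefficient so that the $e^\lambda$-part of $y$ equals $m_\lambda\ch_\xi C_\lambda$ (possible since $y\in R(P_0)$, cf.~\ref{RP00}). Then $y':=y-m_\lambda b_\lambda$ lies in $\cA(\fg)$ (since $b_\lambda\in\Ch(\fg)\subset\cA(\fg)$), and by (b) and (d) the element $y'$ has strictly smaller support in $P^+(\Delta^+)$ — more precisely, $\lambda$ is removed from $\supp(y')\cap P^+(\Delta^+)$ and nothing above $\lambda$ in $P^+(\Delta^+)$ is introduced. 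Iterating (the down-set below $\lambda$ is finite), I reach an element of $\cA(\fg)$ with empty support in $P^+(\Delta^+)$, which by the second hypothesis must be $0$; hence $y\in\Ch(\fg)$.

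**Main obstacle.** I expect the delicate point to be the bookkeeping in the descending induction: one must check that subtracting $m_\lambda b_\lambda$ genuinely decreases a well-founded quantity. Property (d) ensures $\supp(b_\lambda)\subseteq\{\nu\le\lambda\}$, so no new weights appear above $\lambda$; property (b) ensures that apart from $e^\lambda\ch_\xi C_\lambda$ itself, $b_\lambda$ contributes nothing to the coefficients of weights in $P^+(\Delta^+)$, so $\supp(y')\cap P^+(\Delta^+)$ is contained in $(\supp(y)\cap P^+(\Delta^+))\setminus\{\lambda\}$ together with possibly new weights strictly below $\lambda$. The finiteness needed to make this a terminating process comes from the first hypothesis through \Lem{lemele1}: the set of dominant weights below a fixed weight is finite. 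One also has to make sure the coefficient extraction "$y$'s $e^\lambda$-part $=m_\lambda\ch_\xi C_\lambda$" is legitimate, i.e. that $\lambda$ being maximal in $\supp(y)\cap P^+(\Delta^+)$ forces it to be maximal in $\supp(y)$ altogether — this uses \Lem{lem-1} (any weight of an element of $R(P_0)^W$ is $\le$ some dominant weight, and a dominant weight maximal among dominant weights in the support is maximal in the support). Once these finiteness and maximality points are pinned down, the rest is the routine triangular-basis argument.
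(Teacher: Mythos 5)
There is a genuine gap at the point where you claim to verify property (c) of a short basis. You write that after constructing the elements $b_\lambda$ satisfying (b) and (d), you ``set $\xi b_\lambda := b_\lambda$ when $\sdim C_\lambda = 0$ to force (c)''. But this is not a definition you are free to make: $\xi b_\lambda$ is the product of $\xi$ with the already-constructed element $b_\lambda$ inside $\cK(\ft)=\mathbb{Z}[\ft^*;\xi]$, hence it is a determined element and the assertion $\xi b_\lambda=b_\lambda$ is a nontrivial claim that must be proved. This claim is exactly where the second hypothesis enters for the first time: one observes that $\xi b_\lambda - b_\lambda$ lies in $\Ch(\fg)\subset\cA(\fg)$ and, by (b) together with $\sdim C_\lambda =0$, has support disjoint from $P^+(\Delta^+)$, so the second hypothesis forces it to vanish. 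Without establishing (c), the collection $B\coprod\xi B'$ is not even a $\mathbb{Z}$-spanning set for $\Ch(\fg)$ (one would need $B\coprod\xi B$, with possible redundancy), so the triangularity step cannot produce the conclusion you want; your proof, as written, never uses the second hypothesis in the ``short basis'' half of the argument, which is a sign that something is missing.

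Two secondary remarks. First, you invoke $\cK(\fg)\cong\Ch(\fg)$ ``without assuming $\fh=\ft$'' and cite \Cor{corA+-Ch+-}; that corollary concerns the fiber-product description of $\cA(\fg)$ and $\Ch(\fg)$ and does not assert injectivity of $\res^\fg_\ft$ on $\cK(\fg)$, so the citation does not support the claim (the paper proves a basis statement for $\cA(\fg)$ directly and avoids relying on any such isomorphism). Second, your argument for $\cA(\fg)\subset\Ch(\fg)$ via iterated subtraction of $m_\lambda b_\lambda$ from a $\succ$-maximal element of $\supp(y)\cap P^+(\Delta^+)$ is correct in spirit, but the paper's proof is a one-step version: writing $a=\sum_\nu n_\nu\ch_\xi C_\nu$ and subtracting $\sum_{\lambda\in P^+(\Delta^+)}n_\lambda b_\lambda$ at once kills all dominant-weight coefficients simultaneously by property (b), after which the second hypothesis gives the vanishing. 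That one-step form is cleaner and sidesteps the bookkeeping about growing supports that you flag as the ``main obstacle''.
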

\begin{proof}
By~\Lem{lemele1}  the sets 
$Y_{\lambda}$ are finite. By~\ref{exitsshort} the elements
 $b_{\lambda}$ satisfying the properties (b), (d) of~(\ref{eqshort})
can be uniquely defined.
Take $\lambda$ such that $ \sdim C_{\lambda}=0$ and set
$y:=\xi b_{\lambda}-b_{\lambda}$. One has
$$\supp(y)\cap  P^+(\Delta^+)\subset \supp (b_{\lambda})\cap  P^+(\Delta^+)=\{\lambda\}$$
and $\lambda\not\in \supp(y)$ (since
 $\sdim C_{\lambda}=0$). Hence $\supp(y)\cap  P^+(\Delta^+)=\emptyset$.
 Since $y\in\Ch(\fg)$ and $\Ch(\fg)\subset\cA(\fg)$, the second assumption
 gives $y=0$, that is $\xi b_{\lambda}=b_{\lambda}$. This gives the property 
(c) of~(\ref{eqshort}).
Hence the elements $b_{\lambda}$ satisfies the properties (b), (c), (d) 
 of~(\ref{eqshort}). Let us show that $B\coprod \xi B'$
forms a $\mathbb{Z}$-basis of $\cA(\fg)$. 

Let us verify that the elements of $B\coprod \xi B'$ are linearly 
independent over $\mathbb{Z}$. Let
$$\sum_{\lambda\in P^+(\Delta^+)} n_{\lambda} b_{\lambda}=0$$
for some $n_{\lambda}\in\mathbb{Z}[\xi]$ with $n_{\lambda}\in\mathbb{Z}$
if $\sdim C_{\lambda}=0$.
Observe that for each $\lambda\in P^+(\Delta^+)$ the coefficient of $e^{\lambda}$
in the above expression is 
 $n_{\lambda}\ch_{\xi} C_{\lambda}$, so 
 $n_{\lambda}\ch_{\xi} C_{\lambda}=0$. If $\sdim C_{\lambda}=0$, 
this gives $n_{\lambda}=0$ (because $n_{\lambda}\in\mathbb{Z}$).
 If $\sdim C_{\lambda}\not=0$, then
the formulae $\psi_{\pm}(n_{\lambda}\ch_{\xi} C_{\lambda})=0$
give $\psi_{\pm}(n_{\lambda})=0$, so $n_{\lambda}=0$.
Hence $B\coprod \xi B'$ are linearly independent.

Let us check that $\cA(\fg)$ lies in $\mathbb{Z}[\xi]$-span of  $B$.
Take $a\in\cA(\fg)$.
Since $\cA(\fg)\subset R(P_0)$ we have
$a=\sum_{\nu} n_{\nu}\ch_{\xi} C_{\nu}$
for some $n_{\nu}\in\mathbb{Z}[\xi]$. Since $b_{\lambda}\in \Ch(\fg)\subset\cA(\fg)$ 
we obtain
$$a':=a-\sum_{\lambda\in P^+(\Delta^+)}n_{\lambda}b_{\lambda} \in\cA(\fg).$$
By above, $\supp(a')\cap P^+(\Delta^+)=\emptyset$, so the second assumption gives $a'=0$. Therefore $\cA(\fg)$ lies in $\mathbb{Z}[\xi]$-span of  $B$. 

By above,
$B\coprod \xi B'$ is a $\mathbb{Z}$-basis of $\cA(\fg)$. 
 Since $B\subset\Ch(\fg)$
and $\Ch(\fg)\subset\cA(\fg)$, we get $\cA(\fg)=\Ch(\fg)$.
\end{proof}

\subsection{}
\begin{lem}{lem0}
Assume that $\Delta^+$ admits a base $\Sigma$ and that
\begin{equation}\label{P+nice}
P^+(\Delta^+)\subset \{\lambda\in P^+(\pi)|\ \forall \beta\in\Sigma\cap\Delta_{iso}\ \
\langle\lambda,h_{\beta}\rangle\not=0\ \Longrightarrow \lambda-\beta\in P^+(\pi)\}.\end{equation}
Then
for any non-zero $y\in \cA(\fg)$ one has $\max\supp(y)\subset P^+(\Delta^+)$.
\end{lem}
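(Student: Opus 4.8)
The plan is to argue by contradiction. Suppose $y\in\cA(\fg)$ is non-zero and $\lambda\in\max\supp(y)$ but $\lambda\notin P^+(\Delta^+)$. Since $\cA(\fg)\subset R(P_0)^W$ and the support is $W$-stable, we may first use the $W$-invariance of $y$ together with \Lem{lem-1} to reduce to the case $\lambda\in P^+(\pi)$: indeed, if $\lambda$ is maximal in $\supp(y)$ then, after replacing $\lambda$ by a suitable $W$-translate and using that $w\lambda\le\lambda$ characterizes dominant weights, we may assume $\lambda$ is $\fg_{\ol 0}$-dominant. If we had $\lambda\in P^+(\pi)$ but $\lambda\notin P^+(\Delta^+)$, then by the hypothesis (\ref{P+nice}) (read contrapositively) there is a base element $\beta\in\Sigma\cap\Delta_{iso}$ with $\langle\lambda,h_\beta\rangle\neq 0$ and $\lambda-\beta\notin P^+(\pi)$.

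Now I would apply the defining relation of $\cA(\fg)$ attached to this $\beta$: since $\langle\lambda,h_\beta\rangle\neq0$, we have $\sum_{i\in\mathbb Z}(-\xi)^i m_{\lambda+i\beta}=0$, where $m_\nu\in\mathbb Z[\xi]$ are the coefficients of $y$ in the basis $\{\ch_\xi C_\nu\}$ (or just the coefficients of $e^\nu$; the distinction will not matter for the leading-term argument). Because $\lambda\in\max\supp(y)$ and $\beta\in\Delta^+$, the weights $\lambda+i\beta$ with $i>0$ satisfy $\lambda+i\beta>\lambda$, hence lie outside $\supp(y)$, so $m_{\lambda+i\beta}=0$ for $i>0$. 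Therefore the relation collapses to $\sum_{i\ge 0}(-\xi)^{-i}m_{\lambda-i\beta}=0$ with $m_\lambda\neq 0$, which forces $m_{\lambda-i\beta}\neq 0$ for some $i\ge 1$; in particular $\lambda-\beta\in\mathbb N\Delta^+$-downset of $\supp(y)$ and, crucially, the term $m_{\lambda-\beta}$ is the one "next to" the leading term. The point I want to extract is that $\lambda-\beta\in\supp(y)$, and since $\lambda-\beta<\lambda$ this is consistent, but I must then push the argument to derive a contradiction with maximality in a different direction — namely by showing some $W$-translate of $\lambda-\beta$, or some weight obtained by iterating odd reflections, exceeds $\lambda$.

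The key mechanism, and the step I expect to be the main obstacle, is converting "$\lambda-\beta\notin P^+(\pi)$" into a violation of maximality of $\lambda$ in $\supp(y)$. The idea: $\lambda-\beta\notin P^+(\pi)$ means $s_\alpha(\lambda-\beta)>\lambda-\beta$ for some simple $\alpha\in\pi$, i.e. $\langle\lambda-\beta,\alpha^\vee\rangle<0$. Then $s_\alpha(\lambda-\beta)=\lambda-\beta+k\alpha$ for some $k>0$, and this element is in $\supp(y)$ because $\supp(y)$ is $W$-stable. I then want to compare $s_\alpha(\lambda-\beta)$ with $\lambda$: using $\langle\lambda,\alpha^\vee\rangle\ge0$ (as $\lambda\in P^+(\pi)$) and the explicit relation $\langle\beta,\alpha^\vee\rangle$ for $\beta$ a simple root of the base $\Sigma$ with $\pi\subset\mathbb N\Sigma$, I should be able to bound $k$ from below well enough that $s_\alpha(\lambda-\beta)$ either equals a weight $>\lambda$ or is itself dominant and strictly bigger, contradicting $\lambda\in\max\supp(y)$. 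Making this comparison precise — controlling the Cartan integers between $\beta\in\Sigma\cap\Delta_{iso}$ and the simple roots of $\fg_{\ol 0}$, and handling the degenerate possibility that $s_\alpha(\lambda-\beta)$ is incomparable to $\lambda$ rather than larger — is the delicate part, and is presumably where the hypothesis that $\Sigma$ is a genuine base (so that $\beta$ is indecomposable and its pairings with $\pi$ are constrained) gets used in full force. I would treat the remaining bookkeeping (that $m_\nu$ for $\nu$ over the "$\ch_\xi C_\nu$" decomposition behaves the same as the naive $e^\nu$-coefficient near a maximal weight, which follows from \ref{RP00}) as routine.
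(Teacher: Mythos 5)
Your overall strategy matches the paper's: take $\lambda$ maximal in $\supp(y)$, use $W$-invariance and \Lem{lem-1} to get $\lambda\in P^+(\pi)$, invoke~(\ref{P+nice}) to produce $\beta\in\Sigma\cap\Delta_{iso}$ with $\langle\lambda,h_\beta\rangle\neq 0$ and $\lambda-\beta\notin P^+(\pi)$, and then try to contradict maximality via a simple reflection $s_\alpha$ that raises $\lambda-\beta$. However there are two genuine gaps.

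First, from the defining relation of $\cA(\fg)$ together with the vanishing of $m_{\lambda+i\beta}$ for $i>0$, you can only conclude that $m_{\lambda-j\beta}\neq 0$ for \emph{some} $j\ge 1$, not that $m_{\lambda-\beta}\neq 0$. Your proposal slides from ``$m_{\lambda-i\beta}\neq 0$ for some $i\ge 1$'' to ``$\lambda-\beta\in\supp(y)$'', which is unjustified: the relation over $\mathbb{Z}[\xi]$ does not pin down which coefficient in the $\beta$-string is nonzero. Consequently, the element $s_\alpha(\lambda-\beta)$ that you later argue lies in $\supp(y)$ by $W$-stability need not be in $\supp(y)$ at all. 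The paper instead works with $\eta-j\beta\in\supp(y)$ for the actual $j\geq 1$ that the relation produces.

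Second, the comparison you flag as ``the delicate part'' --- showing that $s_\alpha(\lambda-j\beta)>\lambda$ --- is indeed the heart of the proof, and your proposal does not supply the mechanism. The paper's argument is: choose $\alpha\in\pi$ with $s_\alpha\lambda\leq\lambda$ and $s_\alpha(\lambda-\beta)>\lambda-\beta$; deduce $s_\alpha\beta<\beta$; then, \emph{because $\beta$ is a simple root of the base $\Sigma$}, both $s_\alpha\beta$ and $\beta-\alpha$ are negative roots (this is where the base hypothesis enters concretely, not just as a vague constraint on Cartan integers). Writing $s_\alpha(\lambda-\beta)=\lambda-\beta+i\alpha$ with $i>0$, one gets $s_\alpha(\lambda-\beta)-\lambda=(\alpha-\beta)+(i-1)\alpha>0$, so $s_\alpha(\lambda-\beta)>\lambda$; and then $s_\alpha(\lambda-j\beta)=s_\alpha(\lambda-\beta)-(j-1)s_\alpha\beta>\lambda$ since $-s_\alpha\beta>0$. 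Both the fact that $\beta-\alpha<0$ and the fact that $-s_\alpha\beta>0$ are essential and absent from your outline; without them there is no way to handle arbitrary $j$, nor even to settle the $j=1$ case, since, as you note yourself, $s_\alpha(\lambda-\beta)$ could a priori be incomparable to $\lambda$.
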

\begin{proof}
Set $S:=\supp(y)$.  Since $y\in\cA(\fg)$ the set $S$ has the following properties:
\begin{enumerate}
\item $WS=S$ and  $S\subset P_0$;
\item  
if $\langle \eta,h_{\beta}\rangle\not=0$ for some $\beta\in\Delta_{iso}$,
then  $S\cap (\eta+\mathbb{Z}\beta)\not=\{\eta\}$.
\end{enumerate}

Let $\eta$ be a maximal element in $\supp(y)$. Assume that $\eta\not\in P^+(\Delta^+)$. Combining (i) and~\Lem{lem-1} we have   $\eta\in P^+(\pi)$, so 
 $\eta\in P^+(\pi)\setminus P^+(\Delta^+)$. By~(\ref{P+nice})
there exists $\beta\in\Sigma\cap\Delta_{iso}$ such that
$$\langle\eta, h_{\beta}\rangle \not=0,\ \  \eta-\beta\not\in P^+(\pi).$$
 By  (ii) $\eta-j\beta\in S$
for some non-zero integral $j$;
the maximality of $\eta$ gives $j\geq 1$.

Since $\eta\in P^+(\pi)$ and
$\eta-\beta\not\in P^+(\pi)$ there exists $\alpha\in\pi$ such that 
$s_{\alpha}\eta\leq \eta$ and $s_{\alpha}(\eta-\beta)>\eta-\beta$.
In particular, $s_{\alpha}\beta<\beta$. 
Viewing $\fg$ as an 
 $\fsl_2$-module for the 
$\fsl_2\subset\fg_{\ol{0}}$  corresponding to $\alpha$, we get
$\beta-\alpha\in \Delta$. Since $\beta$ is a simple root, both
$s_{\alpha}\beta$ and $\beta-\alpha$ are negative roots.
 Since $s_{\alpha}(\eta-\beta)=\eta-\beta+i\alpha$
for some $i\in\mathbb{Z}_{>0}$, this gives
$s_{\alpha}(\eta-\beta)>\eta$.
Combining with $-s_{\alpha}\beta>0$ we obtain
$s_{\alpha}(\eta-j\beta)>\eta$.
Since $\eta-j\beta\in S$, we have $s_{\alpha}(\eta-j\beta)\in S$
 which contradicts to the maximality of $\eta$.
\end{proof}

%
%\subsubsection{}
%Combining Lemmatta~\ref{propmainstep0} and~\ref{cor0}
%we conclude that (Pr1), (Pr2) imply $\cA(\fg)=\Ch(\fg)$.
%This completes the proof of~\Lem{propmainstep0}.

\subsection{}
\begin{cor}{cormainstep0}
Let $\fg$ be a quasireductive Lie superalgebra with a base $\Sigma$ satisfying~(\ref{P+nice}) and such that $
-\mathbb{R}_{\geq 0}\Delta^+\cap \mathbb{R}_{\geq 0}P^+(\pi)=\{0\}$.
Then $\Ch(\fg)= \cA(\fg)$ and this ring admits a short basis with respect to $\Delta^+$.
\end{cor}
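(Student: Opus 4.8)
The plan is to deduce \Cor{cormainstep0} directly from \Lem{cor0}. The statement of \Cor{cormainstep0} has exactly two hypotheses: the cone condition $-\mathbb{R}_{\geq 0}\Delta^+\cap\mathbb{R}_{\geq 0}P^+(\pi)=\{0\}$, which is literally the first hypothesis of \Lem{cor0}; and the existence of a base $\Sigma$ satisfying~(\ref{P+nice}). So the only real work is to check that the second hypothesis of \Lem{cor0} — namely, that every non-zero $y\in\cA(\fg)$ has $\supp(y)\cap P^+(\Delta^+)\neq\emptyset$ — follows from the existence of such a base.

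First I would invoke \Lem{lem0}: under the assumption that $\Delta^+$ admits a base $\Sigma$ satisfying~(\ref{P+nice}), any non-zero $y\in\cA(\fg)$ has $\max\supp(y)\subset P^+(\Delta^+)$. Since $y\neq 0$, the set $\supp(y)$ is non-empty; and because $\supp(y)\subset P_0$ and $P_0$ sits inside the span of $P^+(\pi)$ with the partial order bounded above along $\Delta^+$-directions — more precisely, because $\supp(y)$ is $W$-stable and contained in $P_0$, combined with \Lem{lem-1} and the cone condition which (via \Lem{lemele1}-type boundedness) forces $\max\supp(y)$ to be non-empty — we conclude $\max\supp(y)\neq\emptyset$. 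Hence $\supp(y)\cap P^+(\Delta^+)\supseteq\max\supp(y)\neq\emptyset$, which is precisely the second hypothesis of \Lem{cor0}.

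With both hypotheses of \Lem{cor0} verified, that lemma gives immediately $\cA(\fg)=\Ch(\fg)$ and the existence of a (unique) short $\mathbb{Z}$-basis with respect to $\Delta^+$, which is exactly the assertion of \Cor{cormainstep0}.

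The one point requiring a little care — the main (minor) obstacle — is the argument that $\max\supp(y)$ is non-empty, i.e. that every element of the $W$-stable set $\supp(y)\subset P_0$ lies below a maximal one. This is where the cone hypothesis $-\mathbb{R}_{\geq 0}\Delta^+\cap\mathbb{R}_{\geq 0}P^+(\pi)=\{0\}$ is genuinely used a second time: given any $\mu\in\supp(y)$, some $W$-conjugate $\mu'$ of $\mu$ lies in the dominant cone, and then by \Lem{lem-1} every element of $\supp(y)\cap(\text{stuff above }\mu)$ lies in the finite set $Y_{\mu'}\cup\{\mu'\}$ analyzed in \Lem{lemele1}, so a maximal element above $\mu$ exists. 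I would spell this out in one or two sentences rather than re-proving the finiteness, citing \Lem{lemele1} and \Lem{lem-1} directly.
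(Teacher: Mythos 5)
Your proof is correct and follows exactly the paper's argument, which deduces the corollary directly from Lemma~\ref{cor0} together with Lemma~\ref{lem0}. The only ``point requiring care'' you flag --- that $\max\supp(y)\neq\emptyset$ --- is in fact automatic, since $y\in\cA(\fg)\subset R(P_0)\subset\mathbb{Z}[P_0;\xi]$ and $\mathbb{Z}[P_0;\xi]$ is a group ring (finite sums), so $\supp(y)$ is a non-empty finite set and trivially has a maximal element; no second appeal to the cone hypothesis or to Lemma~\ref{lemele1} is needed for that step.
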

\begin{proof}The assertion follows from Lemmatta~\ref{cor0} and~\ref{lem0}.
\end{proof}

\subsection{}
\begin{cor}{thmSV} Let $\fg$ be a $Q$-type superalgebra or
a finite-dimensional Kac-Moody superalgebra and $\fg\not=\fgl(n|n)$. Then
 $\Ch(\fg)=\cA(\fg)$ and this  ring
  admits a short  basis
with respect to the mixed triangular decomposition.
\end{cor}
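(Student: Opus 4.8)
The plan is to reduce Corollary~\ref{thmSV} to Corollary~\ref{cormainstep0} by exhibiting, for each $\fg$ in the list, a base $\Sigma$ of $\Delta^+$ that satisfies both hypotheses of that corollary: the cone condition $-\mathbb{R}_{\geq 0}\Delta^+\cap\mathbb{R}_{\geq 0}P^+(\pi)=\{0\}$ and the inclusion~(\ref{P+nice}) relating $P^+(\Delta^+)$ to the $\Sigma\cap\Delta_{iso}$-dominance condition. Since all the work of building the short basis and proving $\Ch(\fg)=\cA(\fg)$ is already done in \Lem{cor0} and \Lem{lem0}, what remains is purely a case-by-case verification of the two geometric/combinatorial properties, and the choice of triangular decomposition in the statement (``the mixed triangular decomposition'') tells us which $\Sigma$ to use.

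First I would dispose of the cone condition. For the $Q$-type superalgebras $\Delta=\Delta_{\ol 0}$, so $\Delta^+$ is an ordinary (reductive) positive system and $-w_0\Delta^+=\Delta^+$ for the longest element $w_0$; by \Rem{remele}, formula~(\ref{remele}), the cone condition holds automatically, and in fact for \emph{any} triangular decomposition. For the finite-dimensional Kac-Moody superalgebras, the mixed triangular decomposition has the property that $\Delta^+$ and $-\Delta^+$ are $W$-conjugate (this is exactly the remark in the introduction that (Pr1) holds when $\Delta^+$, $-\Delta^+$ are $W$-conjugated), so again \Rem{remele} applies and gives $-\mathbb{R}_{\geq 0}\Delta^+\cap\mathbb{R}_{\geq 0}P^+(\pi)=\{0\}$. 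This handles the first bullet uniformly.

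Next I would establish~(\ref{P+nice}). For $Q$-type superalgebras the situation is trivial: $\Delta_{iso}=\Delta_{\ol 0}$ consists of even roots for which $\fsl(1|1)$ does not actually occur in the naive sense — more precisely one checks directly from \ref{setio} and the structure of $\fq_n$ that the isotropic-root condition imposes nothing beyond $\fg_{\ol 0}$-dominance, so $P^+(\Delta^+)=P^+(\pi)$ and~(\ref{P+nice}) is an equality. (Alternatively, for $Q$-type the criterion for $\dim L(\lambda)<\infty$ is just $\fg_{\ol 0}$-dominance, a classical fact.) For the finite-dimensional Kac-Moody superalgebras the content is formula~(\ref{P+Delta}) from the introduction, which the paper says is proved in the Appendix as \Cor{coro}: for the mixed base one has
$$P^+(\Delta^+)=\{\lambda\in P^+(\pi)\mid\forall\beta\in\Sigma\cap\Delta_{iso}\ \langle\lambda,h_\beta\rangle\neq 0\Rightarrow\lambda-\beta\in P^+(\pi)\},$$
which in particular gives the inclusion~(\ref{P+nice}). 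So I would invoke \Cor{coro} here and note that the exclusion $\fg\neq\fgl(n|n)$ is precisely the case where the mixed decomposition behaves well (for $\fgl(n|n)$ a separate argument with a weaker version of (Pr2) is needed, handled elsewhere — see \Cor{corglnn}).

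With both bullets verified for the mixed triangular decomposition, \Cor{cormainstep0} immediately yields $\Ch(\fg)=\cA(\fg)$ together with the existence of a short basis with respect to $\Delta^+$, completing the proof. The main obstacle is not in this corollary itself but is outsourced: it is the dominance criterion~(\ref{P+Delta}) for the mixed base, i.e.\ \Cor{coro} in the Appendix, whose proof requires the explicit classification of dominant weights for each type of finite-dimensional Kac-Moody superalgebra ($\fgl(m|n)$, $\osp$, the exceptionals) and a careful analysis of odd reflections relating the mixed base to the distinguished one. Granting that input, the present corollary is a short bookkeeping argument.
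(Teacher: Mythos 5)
Your reduction to \Cor{cormainstep0} is the right structural move and matches the paper's strategy, but two of the intermediate claims you use to verify its hypotheses are false, and both require a genuinely different argument in the paper.

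\textbf{The cone condition for $\fgl(m|n)$.} You assert that for every finite-dimensional Kac-Moody superalgebra the mixed base has $\Delta^+$ and $-\Delta^+$ $W$-conjugate, so that the implication~(\ref{remele}) gives (Pr1). This fails for $\fgl(m|n)$ as soon as $|m-n|\geq 2$: the mixed base corresponds to the word $(\vareps\delta)^n\vareps^{m-n}$, whereas $-w_0\Sigma$ corresponds to the reversed word $\vareps^{m-n}(\delta\vareps)^n$, and these differ when $m-n\geq 2$. (And $W$-conjugacy of $\Delta^+$ with $-\Delta^+$ forces $-w_0\Delta^+=\Delta^+$, since $w_0$ is the unique $w\in W$ with $w\Delta^+_{\ol{0}}=-\Delta^+_{\ol{0}}$.) This is exactly why the paper's proof of \Cor{thmSV} invokes the $W$-conjugacy route only for the $\osp$ and exceptional cases (where $-\Id\in W$, or where $-w_0\Sigma=\Sigma$ for the mixed base), and handles $\fgl(m|n)$, $m\neq n$, by the separate direct computation in \Lem{lemglmn}, which only needs the first and last letters of the word to coincide.

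\textbf{The dominance criterion for $Q$-type.} You claim that for $Q$-type superalgebras one has $P^+(\Delta^+)=P^+(\pi)$, i.e.\ that finite-dimensionality of $L(\lambda)$ is equivalent to $\fg_{\ol 0}$-dominance. That is not true: for $\fq_n$ every root $\alpha$ lies in $\Delta_{iso}$ (the odd root vectors $e^{\ol 1}_{\pm\alpha}$ bracket to $h_\alpha$ with $\langle\alpha,h_\alpha\rangle=0$), and Penkov's criterion, quoted in the paper, reads
$$P^+(\Delta^+)=\{\lambda\in P^+(\pi)\ |\ \forall\alpha\in\pi,\ s_\alpha\lambda=\lambda\ \Rightarrow\ \langle\lambda,h_\alpha\rangle=0\},$$
which is a strictly smaller set (e.g.\ $\lambda=(1,1)$ is $\fgl_2$-dominant but not $\fq_2$-dominant). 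The verification of~(\ref{P+nice}) for $Q$-type is therefore nontrivial: one takes $\eta\in P^+(\pi)\setminus P^+(\Delta^+)$, uses Penkov's criterion to find $\alpha\in\pi$ with $s_\alpha\eta=\eta$ and $\langle\eta,h_\alpha\rangle\neq 0$, and then observes $s_\alpha(\eta-\alpha)=\eta+\alpha>\eta-\alpha$, so $\eta-\alpha\notin P^+(\pi)$; this is the statement that (Pr2) needs. Your proposed shortcut bypasses the whole point of the $\Sigma\cap\Delta_{iso}$ condition in $Q$-type and would make~(\ref{P+nice}) vacuous.

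Once these two gaps are repaired (use Penkov's criterion for $Q$-type, and invoke \Lem{lemglmn} rather than $W$-conjugacy for $\fgl(m|n)$), the rest of your bookkeeping argument --- invoking \Cor{coro} for the Kac-Moody cases, then \Cor{cormainstep0} --- coincides with the paper's proof.
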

\begin{proof}
For $Q$-type case one has $\Delta^+=\Delta^+_{\ol{0}}=\Delta_{iso}$.
In particular, $\Delta^+$ and $-\Delta^+$ are $W$-conjugated.
By~\cite{Pe},
$$P^+(\Delta^+)=\{\lambda\in P^+(\pi)|\ \forall \alpha\in \pi\ \ 
s_{\alpha}\lambda=\lambda\ \ \Longrightarrow\ \ 
\langle\lambda,h_{\alpha}\rangle=0\}.$$
Take $\eta\in P^+(\pi)\setminus P^+(\Delta^+)$. By above,
there  exists $\alpha\in \pi$ such that $\langle\eta,h_{\alpha}\rangle\not=0$ and
$s_{\alpha}\eta=\eta$. Since $s_{\alpha}(\eta-\alpha)=\eta+\alpha>\eta-\alpha$
we have $\eta-\alpha\not\in P^+(\pi)$. Therefore~(\ref{P+nice})
holds. Combining~(\ref{remele}) and~\Cor{cormainstep0} we obtain
the assertion for $Q$-type superalgebras.

Now let  $\fg$ be one of the algebras $\fgl(m|n)$ for $m\not=n$,
$\mathfrak{osp}\left(m|n\right)$,  $D\left(2|1;a\right)$, $F(4)$ or $G(3)$.
The mixed bases for these superalgebras
satisfy~(\ref{P+nice}), see Appendix.
It remains  to verify the 
formula
$(-\mathbb{R}_{\geq 0}\Delta^+)\cap \mathbb{R}_{\geq 0}P^+(\pi)=\{0\}$.
For  $D\left(2|1;a\right)$, $F(4)$, $G(3)$ and $\osp(2m+1|n)$  the Weyl group $W$ contains $-\Id$;
for $\osp(2m|2n)$ the sets $-\Delta^+$ and $\Delta^+$ are $W$-conjugated
if $\Sigma$ is a mixed base, see Appendix. Thus~(\ref{remele}) implies
the required formula for these cases. For the remaining case $\fgl(m|n)$
with $m\not=n$ the formula is verified in~\Lem{lemglmn} below.
\end{proof}

\subsection{Case $\fgl(m|n),\fsl(m|n)$}\label{glmn}
One has $\fgl(m|n)_{\ol{0}}=\fgl_m\times\fgl_n$. 
Let $\vareps_1,\ldots,\vareps_m,\delta_1,\ldots,\delta_n$ be the weights
of the standard representation of $\fgl(m|n)$ and 
$$\pi=\{\vareps_i-\vareps_{i+1}\}_{i=1}^{m-1}\cup
\{\delta_j-\delta_{j+1}\}_{j=1}^n.$$

Take $\fg=\fgl(m|n)$ or $\fg=\fsl(m|n)$ for $m\not=n$.
The bases compatible with the triangular decomposition
of $\fg_{\ol{0}}$ can be encoded by words in $m$ letters
$\vareps$ and $n$ letters $\delta$: for instance, the word $\vareps^2\delta$
correspond to the distinguished base $\{\vareps_1-\vareps_2,\vareps_2-\delta_1\}$ and
 the word $(\vareps\delta)^n\vareps^{m-n}$  corresponds to the mixed base 
$$
\Sigma_{m|n}:=\{\vareps_1-\delta_1,\delta_1-
\vareps_2,\ldots,\vareps_n-\delta_n,\delta_n-\vareps_{n+1},
\vareps_{n+1}-\vareps_{n+2},\ldots,
\vareps_{m-1}-\vareps_m\}$$
The distinguished bases
correspond to the words $\vareps^m\delta^n$ and $\delta^n\vareps^m$.
The bases $\Sigma$ and  $-w_0\Sigma$ correspond to the inverse words:
for example, $-w_0\Sigma_{m|n}$ corresponds to $\vareps^{m-n}(\delta\vareps)^n$.

\subsubsection{}
\begin{lem}{lemglmn}
Take $\fg=\fgl(m|n)$ or $\fg=\fsl(m|n)$ for $m\not=n$.
If the first and the last letter in the word corresponding to $\Sigma$
are the same, then  $(-\mathbb{R}_{\geq 0}\Delta^+)\cap \mathbb{R}_{\geq 0}P^+(\pi)=\{0\}$.
\end{lem}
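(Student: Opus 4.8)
The statement concerns $\fg = \fgl(m|n)$ or $\fsl(m|n)$ with $m \neq n$, and a base $\Sigma$ whose associated word $w$ (in $m$ letters $\vareps$ and $n$ letters $\delta$) has the same first and last letter. I want to show $(-\mathbb{R}_{\geq 0}\Delta^+)\cap \mathbb{R}_{\geq 0}P^+(\pi)=\{0\}$. The plan is to produce a single linear functional $\varphi$ on $\ft^*$ (equivalently an element $h\in\ft$, via the pairing) that is strictly positive on $\Delta^+$ and non-negative on $P^+(\pi)$; then any $\nu$ in the intersection satisfies $\varphi(\nu)\le 0$ and $\varphi(\nu)\ge 0$, and moreover $\varphi$ being strictly positive on $\Delta^+$ forces $\nu=0$ once we know $\nu\in -\mathbb{R}_{\geq 0}\Delta^+$ and $\varphi(\nu)=0$ (a non-negative combination of positive-$\varphi$ vectors vanishing under $\varphi$ is zero). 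So the whole problem reduces to constructing such an $h$.

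First I would set up coordinates: write $h = \sum_i a_i \vareps_i^\vee + \sum_j b_j \delta_j^\vee$ in the obvious dual basis, so that $\langle \vareps_i, h\rangle = a_i$, $\langle \delta_j, h\rangle = b_j$ (for $\fsl$ one works modulo the constraint $\sum a_i + \sum b_j = 0$, which is harmless). The positive system $\Delta^+(\Sigma)$ is read off from the word $w$: if position $p$ precedes position $q$ in $w$ and they carry symbols $x$ (at $p$) and $y$ (at $q$) with associated weights $\mu_p,\mu_q\in\{\vareps_\bullet,\delta_\bullet\}$, then $\mu_p - \mu_q \in \Delta^+$. Thus "$\varphi$ strictly positive on $\Delta^+$" is exactly: the sequence of values $(c_1,\dots,c_{m+n})$ assigned along the word (namely $c_p = a_i$ if the $p$-th letter is the $\vareps_i$ in the word, $c_p = b_j$ if it is $\delta_j$) is strictly decreasing. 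Meanwhile, by \Lem{lem-1}, $\mathbb{R}_{\geq 0}P^+(\pi)$ is cut out (inside $P_0\otimes\mathbb{R}$) by $\langle\mu, s_\alpha\rangle \le 0$-type conditions, but concretely $\mathbb{R}_{\geq 0}P^+(\pi) = \{\mu \mid a_1\ge\cdots\ge a_m,\ b_1\ge\cdots\ge b_n\}$ paired appropriately — so "$\varphi\ge 0$ on $P^+(\pi)$" amounts to requiring $a_1\ge a_2\ge\cdots\ge a_m$ weighted by nonneg coefficients, i.e. the partial sums $a_1,\,a_1+a_2,\dots$ and likewise for $b$ are $\ge 0$; a clean sufficient choice is to make the $a_i$ themselves a decreasing sequence with $a_1+\cdots+a_m\ge 0$, similarly for $b$, which is automatic if we pick the values symmetric about $0$. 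Concretely: assign the strictly decreasing values $\tfrac{m+n-1}{2}, \tfrac{m+n-3}{2},\dots,-\tfrac{m+n-1}{2}$ along the word from left to right — this forces the word-sequence strictly decreasing (so $\varphi>0$ on $\Delta^+$), and the subsequence of $\vareps$-positions is automatically strictly decreasing in $i$ as well (so $a_1>a_2>\cdots>a_m$), likewise $b_1>\cdots>b_n$, giving $\varphi\ge 0$ on $P^+(\pi)$ provided the relevant partial sums are nonnegative.

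The one place where the hypothesis "first and last letter equal" is used — and this is the main obstacle — is precisely in checking the partial-sum (equivalently, the $\ge 0$ on $P^+(\pi)$) condition. If, say, $w$ begins and ends with $\vareps$, then the $\vareps$-values occupy positions that are "spread to the extremes" of the decreasing sequence: the first $\vareps$-value is the global maximum $\tfrac{m+n-1}{2}$ and the last $\vareps$-value is the global minimum $-\tfrac{m+n-1}{2}$, so the multiset of $a_i$'s is symmetric-ish and its partial sums $a_1, a_1+a_2,\dots$ stay $\ge 0$; meanwhile the $\delta$-values all lie strictly between, and because there are $n < m$ (or the count works out) of them sitting in the "middle", their partial sums are also $\ge 0$. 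I would verify this by a direct interlacing/pairing argument: pair the $k$-th $\vareps$-value from the left with the $k$-th from the right; their sum is $\ge 0$ by the extremal positioning, and a telescoping bounds each partial sum. If instead $m=n$ this pairing breaks (the $\vareps$-symbols can be forced entirely to one side, as in the distinguished word $\vareps^n\delta^n$, whose values are all positive then all negative — then $P^+(\pi)$ would need $a_n \ge 0$ but $a_n$ is the minimum, fine, but $b_1\ge 0$ fails since $b_1$ is negative), which is exactly why $\fgl(n|n)$ is excluded. So the proof is: (1) translate both cones into inequalities on $(a_i),(b_j)$ via \Lem{lem-1}; (2) define $h$ by the strictly-decreasing assignment along $w$; (3) check $\varphi|_{\Delta^+}>0$ (immediate from "decreasing along the word"); (4) check $\varphi|_{P^+(\pi)}\ge 0$ using "first letter $=$ last letter" and $m\ne n$ via the interlacing argument; (5) conclude $\nu=0$ from $\varphi(\nu)=0$ and $\nu\in -\mathbb{R}_{\geq 0}\Delta^+$.

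I expect step (4) to require the most care — in particular getting the partial sums right may need a slightly cleverer choice of values than the naive arithmetic progression (e.g. perturbing so that the $\delta$-block contributions are controlled), but the structural reason it works — extremal symbols at both ends forcing a "balanced" spread — is robust, and the failure at $m=n$ pinpoints why no further hypothesis removal is possible. Steps (1), (3), (5) are routine.
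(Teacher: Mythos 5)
The paper's own proof is a two-line coordinate argument: write $\nu=\sum x_i\vareps_i+\sum y_j\delta_j$. The condition $\nu\in\mathbb{R}_{\ge 0}P^+(\pi)$ gives the monotonicity $x_1\ge\dots\ge x_m$ and $y_1\ge\dots\ge y_n$. If the word begins and ends with $\vareps$, then $\vareps_1$ is the first letter and $\vareps_m$ the last, so every root of $\Delta^+$ has $\vareps_1$-coefficient $\ge 0$ and $\vareps_m$-coefficient $\le 0$; hence $\nu\in-\mathbb{R}_{\ge 0}\Delta^+$ forces $x_1\le 0\le x_m$, which sandwiches all $x_i=0$, and then repeating the argument for the $\delta$'s forces $y_j=0$. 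Your strategy --- produce one element $h\in\ft$ with $\langle\cdot,h\rangle$ strictly positive on $\Delta^+$ and nonnegative on $\mathbb{R}_{\ge 0}P^+(\pi)$ --- is a genuinely different route and would also work if you actually exhibited such an $h$, but as written it has a real gap.

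The gap is in your translation of ``$\varphi\ge 0$ on $\mathbb{R}_{\ge 0}P^+(\pi)$''. For $\fgl_m\times\fgl_n$, both $\pm(\vareps_1+\dots+\vareps_m)$ and $\pm(\delta_1+\dots+\delta_n)$ are dominant (highest weights of the determinant characters and their inverses). So a functional that is nonnegative on $\mathbb{R}_{\ge 0}P^+(\pi)$ must satisfy $\sum_i\varphi(\vareps_i)=0$ and $\sum_j\varphi(\delta_j)=0$ --- not merely $\ge 0$, as you claim when you write ``a decreasing sequence with $a_1+\dots+a_m\ge 0$ suffices.'' Your arithmetic-progression assignment $c_p=\tfrac{m+n+1}{2}-p$ along the word only makes the \emph{total} sum over all $m+n$ positions vanish; the separate $\vareps$-block and $\delta$-block sums are nonzero in general. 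For the mixed base of $\fgl(4|2)$, with word $\vareps\delta\vareps\delta\vareps\vareps$, the $\vareps$-positions $\{1,3,5,6\}$ give $\vareps$-sum $-1\ne 0$, so $\varphi(-\vareps_1-\dots-\vareps_4)=1>0$ fails the requirement. The interlacing/pairing heuristic (``first $\vareps$ at the top, last $\vareps$ at the bottom'') only controls the extremal pair and does not control the interior positions, let alone force the full block-sums to vanish. You would need to solve the linear system $\sum_{\vareps\text{-pos}}c_p=\sum_{\delta\text{-pos}}c_p=0$ inside the open cone $c_1>\dots>c_{m+n}$; this is solvable under the hypotheses, but it is not given by your choice, and you do not carry out the construction. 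The paper's direct sandwich argument bypasses all of this.
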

\begin{proof}
Take $\nu\in (-\mathbb{R}_{\geq 0}\Delta^+)\cap \mathbb{R}_{\geq 0}P^+(\pi)$
and write
$\nu=\sum x_i\vareps_i+\sum y_i\delta_i$
for $x_i,y_i\in\mathbb{R}$. 
The condition $\nu\in  \mathbb{R}_{\geq 0}P^+(\pi)$
gives 
$$x_1\geq x_2\ldots\geq x_m,\  \ \ y_1\geq y_2 \ldots\geq y_n.$$

Let  the first and the last letter in the word corresponding to $\Sigma$
be $\vareps$.  Then the condition $\nu \in-\mathbb{R}_{\geq 0}\Delta^+$ gives
 $x_1\leq 0\leq x_m$, so $x_i=0$ for all $i$. Now
the condition $\nu \in-\mathbb{R}_{\geq 0}\Delta^+$ gives $y_1\leq 0\leq y_m$, so 
$y_j=0$ for all $j$.
\end{proof}

\subsubsection{Case $\fgl(n+1|n+1)$}\label{Sigmann}
We fix the base corresponding to $(\vareps\delta)^n\delta\vareps$:
$$\Sigma=\{\vareps_1-\delta_1,\delta_1-
\vareps_2,\ldots,\vareps_n-\delta_n,
 \delta_n-\delta_{n+1},\delta_{n+1}-\vareps_{n+1}\}.$$
 We will show that $\Ch(\fg)=\cA(\fg)$ and $\Ch(\fg)$ admits a short basis with respect
 to $\Delta^+(\Sigma)$.

\subsubsection{}
\begin{lem}{lemglnn0}
For any non-zero $y\in \cA(\fg)$ one has $\supp(y)\cap P^+(\Delta^+)\not=\emptyset$.
\end{lem}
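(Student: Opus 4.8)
The statement is the analogue, for $\fgl(n+1|n+1)$ with the specific base $\Sigma$ corresponding to the word $(\vareps\delta)^n\delta\vareps$, of the conclusion of \Lem{lem0} (whose hypothesis (\ref{P+nice}) fails for $\fgl(n|n)$ — this is exactly why $\fgl(n|n)$ is treated separately). So I would not try to invoke \Lem{lem0} directly; instead I would mimic its proof, working with a maximal weight $\eta\in\max\supp(y)$ and using the two structural properties of $S:=\supp(y)$ coming from $y\in\cA(\fg)$: (i) $WS=S$ and $S\subset P_0$; (ii) if $\langle\eta,h_\beta\rangle\neq 0$ for some $\beta\in\Delta_{iso}$ then $S\cap(\eta+\ZZ\beta)\neq\{\eta\}$. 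By \Lem{lem-1} combined with (i), any maximal $\eta\in S$ lies in $P^+(\pi)$, so it suffices to rule out $\eta\in P^+(\pi)\setminus P^+(\Delta^+)$.

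\textbf{Key steps.} First I would recall (from the Appendix / the dominance criterion for $\fgl(m|n)$) the explicit description of $P^+(\Delta^+(\Sigma))$ for this base: a weight $\lambda=\sum a_i\vareps_i+\sum b_j\delta_j\in P^+(\pi)$ fails to be in $P^+(\Delta^+)$ exactly when one of the odd simple roots $\beta\in\Sigma\cap\Delta_{iso}$ (these are $\vareps_i-\delta_i$, $\delta_i-\vareps_{i+1}$, and $\delta_{n+1}-\vareps_{n+1}$) has $\langle\lambda,h_\beta\rangle\neq 0$ but $\lambda-\beta\notin P^+(\pi)$ — \emph{except} that for $\fgl(n|n)$-type there is the well-known extra subtlety that the ``atypicality at the tail'' can be shifted. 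Concretely: if $\eta\in P^+(\pi)\setminus P^+(\Delta^+)$, then either (a) there is $\beta\in\Sigma\cap\Delta_{iso}$ with $\langle\eta,h_\beta\rangle\neq 0$ and $\eta-\beta\notin P^+(\pi)$ — and then I copy the argument of \Lem{lem0} verbatim: pick $\alpha\in\pi$ with $s_\alpha\eta\leq\eta$, $s_\alpha(\eta-\beta)>\eta-\beta$, deduce $s_\alpha\beta<\beta$ hence $\beta-\alpha\in\Delta$ with $\beta-\alpha$, $s_\alpha\beta$ both negative, and conclude $s_\alpha(\eta-j\beta)>\eta$ for the $j\geq 1$ with $\eta-j\beta\in S$ given by (ii), contradicting maximality; or (b) the only obstruction is the ``boundary'' isotropic root $\delta_{n+1}-\vareps_{n+1}$ at the very end of $\Sigma$, with $\langle\eta,h_{\delta_{n+1}-\vareps_{n+1}}\rangle\neq 0$. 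In case (b) the reflection argument may not produce an $\alpha\in\pi$ as needed (this is precisely where the word ends in $\delta\vareps$ rather than $\vareps\vareps$, so \Lem{lemglmn} does not apply and \Lem{lem0} is unavailable). Here I would argue directly: applying (ii) with $\beta=\delta_{n+1}-\vareps_{n+1}$ gives $\eta+j\beta\in S$ for some $j\neq 0$; since $\eta$ is maximal and $\beta<0$ (note $\delta_{n+1}-\vareps_{n+1}\in-\Delta^+$ for this $\Sigma$... one must check the sign carefully from the word order), one gets $j<0$, i.e. $\eta-|j|\beta\in S$ with $-\beta=\vareps_{n+1}-\delta_{n+1}>0$, so $\eta-|j|\beta>\eta$, again contradicting maximality — \emph{unless} $\beta\in\Delta^+$, in which case I would instead exploit that $\langle\eta,h_\beta\rangle\neq 0$ with $\eta$ dominant forces, via $s_\alpha$ for a suitable $\alpha\in\pi$ acting on the $\vareps_{n+1}$, a weight in $S$ strictly above $\eta$. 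The upshot: either way every maximal element of $\supp(y)$ lies in $P^+(\Delta^+)$, which in particular gives $\supp(y)\cap P^+(\Delta^+)\neq\emptyset$.

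\textbf{Main obstacle.} The delicate point is case (b): for $\fgl(n+1|n+1)$ the base $\Sigma$ has an odd simple root at the tail whose associated reflection-plus-positivity argument from \Lem{lem0} breaks down, and one must instead squeeze a contradiction purely from $W$-invariance and the single isotropic relation along $\eta+\ZZ\beta$. I expect the bookkeeping of which roots are positive for the word $(\vareps\delta)^n\delta\vareps$ — in particular the sign of $\delta_{n+1}-\vareps_{n+1}$ and which $\alpha\in\pi$ pair nontrivially with it — to be the fiddly part, and it is the reason this lemma (and $\fgl(n|n)$) needs its own treatment rather than falling under \Cor{cormainstep0}. The finiteness of $Y_\lambda$, needed downstream to actually build the short basis, will follow as before from \Lem{lemglmn} once we check that the word $(\vareps\delta)^n\delta\vareps$ begins and ends with different letters — so that part of the eventual $\Ch(\fg)=\cA(\fg)$ statement will require a separate (weaker) argument for (Pr1), consistent with the remark in the introduction that for $\fgl(n|n)$ one uses (Pr1) together with a weakened (Pr2).
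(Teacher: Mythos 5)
Your plan correctly identifies the starting point (maximal $\eta\in\supp(y)$, using $W$-invariance and the $\cA(\fg)$-relation along $\eta+\ZZ\beta$), but the argument does not close, and the failure is precisely at the point where the paper introduces its new idea.

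The paper does not take $\eta$ maximal for the partial order $>$: it first fixes a \emph{total} order $\succ$ on $\Sigma$, extends it lexicographically to $\RR\Delta$ (so that $\lambda>\nu\Rightarrow\lambda\succ\nu$), and takes $\eta$ maximal for $\succ$. This refinement is essential. The genuine difficulty for $\Sigma=(\vareps\delta)^n\delta\vareps$ is not the tail root $\delta_{n+1}-\vareps_{n+1}$ in isolation (note, incidentally, that this root belongs to $\Sigma$ and is therefore \emph{positive}, not negative as you wrote), but the even simple root $\alpha=\vareps_n-\vareps_{n+1}\in\pi$, which is a sum of \emph{three} elements of $\Sigma$ and hence is reached only by composing two odd reflections $r_{\beta'}r_{\beta}$ with $\beta=\vareps_n-\delta_n$, $\beta'=\delta_{n+1}-\vareps_{n+1}$. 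The paper replaces the hypothesis~(\ref{P+nice}) of \Lem{lem0} by Prop.~\ref{propi}, which produces an $\alpha\in\pi$ and a base $\Sigma'\in\cB_\alpha$ with $\langle\hwt_{\Sigma'}L(\eta),\alpha^\vee\rangle\notin\NN$; when $\Sigma'=r_{\beta'}r_\beta\Sigma$ one is led to consider $\gamma=\eta-\hwt_{\Sigma'}L(\eta)\in\{\beta,\beta',\beta+\beta'\}$. In the case $\gamma=\beta+\beta'$ one applies property (ii) twice to get $\eta-j\beta-i\beta'\in S$ with $j\ge1$, $i\neq0$, and it is only the lexicographic order (with, say, $\beta'\succ\beta$) that forces $i\ge1$: if one uses $>$ alone, $\eta-j\beta-i\beta'$ with $i<0$ is not $>\eta$ in general, so no contradiction is obtained. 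Your dichotomy (a)/(b) therefore misses a whole class of $\eta\in P^+(\pi)\setminus P^+(\Delta^+)$ — those for which the obstruction lives at $\alpha=\vareps_n-\vareps_{n+1}$ and is not visible through any single $\beta\in\Sigma\cap\Delta_{iso}$ — and your case (b) is a hand-wave (``exploit\ldots via $s_\alpha$ for a suitable $\alpha$'') that, when one tries to carry it out, fails: $s_\alpha(\eta-|j|\beta)$ need not lie above $\eta$ when $s_\alpha\eta<\eta$. To repair the argument you need both the total order $\succ$ and the odd-reflection dominance criterion of Prop.~\ref{propi}; these are exactly what the paper brings in.
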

\begin{proof}
We retain notation of~\ref{KM}.
We denote by $(-|-)$ a standard non-degenerate bilinear form on $\ft^*$;
for $\beta\in\Delta_{iso}$ one has
$(\mu|\beta)=0$ if and only if $\langle\mu,h_{\beta}\rangle=0$.

Set $S:=\supp(y)$. Recall that 
\begin{enumerate}
\item $WS=S$ and  $S\subset P_0$;
\item  
if $(\eta,\beta)\not=0$ for some $\beta\in\Delta_{iso}$,
then $S\cap (\eta+\mathbb{Z}\beta)\not=\{\eta\}$.
\end{enumerate}

We fix a total order $\succ$ on $\Sigma$:
$\Sigma=\{\alpha_1\succ\alpha_2\succ \ldots\succ\alpha_{\ell}\}$
and extend it to the lexicographic order
on $\mathbb{R}\Delta$ by setting $\sum k_i\alpha_i>0$
if $k_1>0$ or $k_1=0$, $k_2>0$ and so on. For $\lambda,\nu\in P_0$ we set
 $\lambda\succ\nu$ if $\lambda-\nu\succ 0$.
 Note that 
 $$\lambda>\nu\ \ \Longrightarrow\ \ \lambda\succ\nu.$$

Let $\eta$ be a maximal element in $\supp(y)$ with respect to the order
$\succ$.  

Combining (i) and~\Lem{lem-1} we have   $\eta\in P^+(\pi)$.
Assume that $\eta\not\in P^+(\Delta^+)$. Set $L:=L(\eta)$.
By~\Prop{propi} there exists 
$\alpha\in\pi$ such that for any $\Sigma'$ containing $\alpha$ one has
$\langle\hwt_{\Sigma'} L,\alpha^{\vee}\rangle\not\in\mathbb{N}$.
Fix such $\alpha$.
Note that $\langle\eta,\alpha^{\vee}\rangle\in\mathbb{N}$ since $\eta\in P^+(\pi)$. In particular, $\hwt_{\Sigma'} L\not=\eta$ and $\alpha\not\in\Sigma$.
One readily sees that for $\alpha\in\pi\setminus \Sigma$ one has 
$\alpha\in\Sigma'$, where either
$$\Sigma'=r_{\beta}\Sigma \ \text{ for }\beta\in\Sigma_{iso},\ s_{\alpha}\beta<0$$
or, for $\alpha=\vareps_n-\vareps_{n+1}$, one has
$$\Sigma'= r_{\beta'}r_{\beta}\Sigma=r_{\beta}r_{\beta'}\Sigma\ \text{ for }
\beta,\beta'\in\Sigma_{iso},\ \ (\beta|\beta')=0,\ \ 
\ \ s_{\alpha}\beta,s_{\alpha}\beta'<0.$$
(The property $(\beta|\beta')=0$ gives 
$\beta'\in  r_{\beta}\Sigma_{iso}$, so 
$r_{\beta'}r_{\beta}\Sigma$ is well-defined).
Set 
$$\gamma:=\eta-\hwt_{\Sigma'} L.$$
By above, $\gamma\not=0$, so 
$\gamma=\beta$ if  $\Sigma'=r_{\beta}\Sigma$ and 
$\gamma\in\{\beta,\beta',\beta+\beta'\}$
if $\Sigma'= r_{\beta'}r_{\beta}\Sigma$. Observe that 
$\gamma<\alpha$ in all cases.
Combining $\eta\in P^+(\pi)$ and
$\langle\eta-\gamma,\alpha^{\vee}\rangle\not\in\mathbb{N}$ we get
$$s_{\alpha}(\eta-\gamma)-(\eta-\gamma)=(k+1)\alpha\ \ \text{ for } k\in\mathbb{N}.$$
Therefore
$s_{\alpha}(\eta-\gamma)-(\eta-\gamma)\geq\alpha>\gamma$
which implies
$$s_{\alpha}(\eta-\gamma)>\eta.$$

Recall that $-s_{\alpha}\beta>0$ and $-s_{\alpha}\beta'>0$ if
$\Sigma'= r_{\beta'}r_{\beta}\Sigma$.

If $\gamma=\beta$, then $(\eta|\beta)\not=0$ and 
the property (ii) gives
$\eta-j\beta\in S$ for some $j\geq 1$. Then $S$ contains
$$s_{\alpha}(\eta-j\beta)=s_{\alpha}(\eta-\gamma)-(j-1)s_{\alpha}\beta>\eta,$$
a contradiction. The case $\gamma=\beta'$ is similar. Consider the remaining case
$\gamma=\beta+\beta'$. Then $(\eta|\beta),(\eta|\beta')\not=0$.
Let $\beta'\succ\beta$. By (ii) $\eta-j\beta\in S$ for some $j\in\mathbb{N}$
and then $\eta-j\beta-i\beta'\in S$ for some non-zero integral $i$.
If $i<0$, then $\eta-j\beta-i\beta'\succ\eta$ which contradicts
to the maximality of $\eta$. Hence $i\geq 1$.
Then
$$s_{\alpha}(\eta-j\beta-i\beta')=s_{\alpha}(\eta-\gamma)-(j-1)s_{\alpha}\beta
-(i-1)s_{\alpha}\beta'>\eta,$$
a contradiction. Hence $\eta\in P^+(\Delta^+)$ as required.
\end{proof}

Combing Lemmatta~\ref{cor0}, \ref{lemglmn}, \ref{lemglnn0} we obtain the

\subsubsection{}
\begin{cor}{corglnn}
For $\fgl(n|n)$ one
has $\Ch(\fg)=\cA(\fg)$; this ring admits a short basis with respect
 to $\Delta^+(\Sigma)$.
\end{cor}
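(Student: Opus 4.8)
The plan is to apply \Lem{cor0} directly, so I need to verify its two hypotheses for $\fg=\fgl(n|n)$ with the base $\Sigma$ corresponding to the word $(\vareps\delta)^n\delta\vareps$. The first hypothesis, $\mathbb{R}_{\geq 0}P^+(\pi)\cap(-\mathbb{R}_{\geq 0}\Delta^+)=\{0\}$, is precisely the statement of \Lem{lemglmn}: the word $(\vareps\delta)^n\delta\vareps$ begins with $\vareps$ and ends with $\vareps$, so the first and last letters agree and the lemma applies verbatim. The second hypothesis, that any nonzero $y\in\cA(\fg)$ has $\supp(y)\cap P^+(\Delta^+)\neq\emptyset$, is exactly \Lem{lemglnn0}. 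So the corollary follows by combining Lemmata~\ref{cor0}, \ref{lemglmn}, and \ref{lemglnn0}, and \Lem{cor0} then delivers both $\Ch(\fg)=\cA(\fg)$ and the existence (in fact uniqueness) of a short $\mathbb{Z}$-basis with respect to $\Delta^+(\Sigma)$.

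The reason $\fgl(n|n)$ is handled separately rather than via \Cor{cormainstep0} is that the full property~(\ref{P+nice}) fails here — this is the ``weaker version of (Pr2)'' alluded to in the introduction. Indeed, for $\fgl(n|n)$ with a mixed base one has $\mathbb{R}_{\geq 0}P^+(\pi)\cap(-\mathbb{R}_{\geq 0}\Delta^+)\neq\{0\}$ for the standard mixed base $\Sigma_{n|n}$, which is why the modified base corresponding to $(\vareps\delta)^n\delta\vareps$ is introduced in~\ref{Sigmann}: inserting the extra pair $\delta_n-\delta_{n+1},\delta_{n+1}-\vareps_{n+1}$ at the tail makes the word begin and end with the same letter $\vareps$, salvaging hypothesis one. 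The price is that \Lem{lem0} (which would give $\max\supp(y)\subset P^+(\Delta^+)$ cheaply from~(\ref{P+nice})) is no longer available, so one must instead argue directly that $\supp(y)$ meets $P^+(\Delta^+)$ — this is the content of \Lem{lemglnn0}, whose proof uses the dominance criteria of the Appendix (\Prop{propi}) together with a careful lexicographic-order argument tracking odd reflections through the specific base $\Sigma$.

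Thus the proof of \Cor{corglnn} itself is a one-line assembly: it is the three preceding lemmata plus \Lem{cor0}. The genuine work — and the main obstacle — lives in \Lem{lemglnn0}, not in the corollary. There, the subtlety is that one cannot conclude $\max\supp(y)\subset P^+(\Delta^+)$ as in the generic case; one only gets that $\supp(y)$ contains \emph{some} dominant-for-$\Delta^+$ weight, and the argument must trace, for a maximal (in the lexicographic order $\succ$) element $\eta\in\supp(y)\cap P^+(\pi)$ failing to lie in $P^+(\Delta^+)$, an odd root $\beta$ (or a pair $\beta,\beta'$ with $(\beta|\beta')=0$, in the exceptional case $\alpha=\vareps_n-\vareps_{n+1}$) produced by the odd-reflection description of the highest weight $\hwt_{\Sigma'}L(\eta)$, and then apply the $\cA(\fg)$-defining relation (ii) to push $\eta$ up past itself under $s_\alpha$, contradicting maximality. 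Once \Lem{lemglnn0} is in hand, \Cor{corglnn} is immediate and requires no further calculation.
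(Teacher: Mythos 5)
Your proposal is correct and follows precisely the same route as the paper: the corollary is exactly the assembly of \Lem{cor0} with hypothesis (i) supplied by \Lem{lemglmn} (the word $(\vareps\delta)^n\delta\vareps$ begins and ends in $\vareps$) and hypothesis (ii) supplied by \Lem{lemglnn0}. Your surrounding commentary — that the standard mixed word $(\vareps\delta)^n$ fails (Pr1), that inserting $\delta\vareps$ at the tail repairs it, and that the genuine work then shifts to \Lem{lemglnn0} because \Lem{lem0} is no longer available — is an accurate account of why the $\fgl(n|n)$ case is segregated, but is not needed for the formal proof, which is indeed a one-line citation.
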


\section{Grothendieck rings for subcategories of $\Fin(\fg)$}\label{Reform}
In this section we show that our description of $\cA_{-}(\fg)$ is equivalent 
to Sergeev-Veselov description. In~\ref{pn} we deduce the formula $\Ch(\fp_n)=\cA(\fp_n)$
from the results of~\cite{IRS}.

\subsection{Category $\cF(P)$}\label{FP}
The group $\mathbb{Z}\Delta$
acts on $P_0$ by the shifts $t_{\gamma}(\mu):=\mu+\gamma$ for $\gamma\in  \mathbb{Z}\Delta$ and $\mu\in P_0$. Let
 $P\subset P_0$ be an invariant subset (i.e., $\mu+\alpha\in P$ 
for all $\mu\in P$ and $\alpha\in \Delta$).
We denote by $\cF(P)$ (resp., $\cF_{\fh}(P)$) the full subcategory
of $\Fin(\fg)$ (resp., of $\Fin(\fh)$) with the weights
in $P$.

\subsubsection{}
We set $R(P):=\res^{\fh}_{\ft}\cK(\cF_{\fh}(P))$.
In other words, $R(P)$ 
is a $\mathbb{Z}[\xi]$-span of $\ch_{\xi} C_{\nu}$ for $\nu\in P$. 
The ring $R(P_0)$ corresponds to the case $P=P_0$.
Note that $\ch_{\xi} L(\lambda)\in R(P)$ for each $\lambda\in P$.
We denote by $\Ch(\cF(P))$ the image $\res^{\fg}_{\ft}(\cK(\cF(P))$.

Since $P+\mathbb{Z}\Delta\subset P$, we have $WP=P$ and so 
$W(R(P))=R(P)$.
We set
$$\cA(P):=\cA(\fg)\cap R(P).$$

\subsubsection{}
\begin{lem}{lemchP}
One has $\Ch(\cF(P))=R(P)\cap \Ch(\fg)$.
In particular,
$\Ch(\fg)=\cA(\fg)$ implies $\Ch(\cF(P))=\cA(P)$ 
for any $P$.
\end{lem}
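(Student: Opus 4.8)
The plan is to prove the containment $\Ch(\cF(P))\subseteq R(P)\cap \Ch(\fg)$ in the obvious direction and then establish the reverse inclusion using the structure of $\cK(\fg)$ and the basis $\{[L(\lambda)]\}$ described in~\ref{basisK}. For the easy direction, note that $\cF(P)$ is a full subcategory of $\Fin(\fg)$ closed under subquotients, so $\cK(\cF(P))$ maps into $\cK(\fg)$ and the image of $\res^{\fg}_{\ft}$ restricted to $\cK(\cF(P))$ lands in $\Ch(\fg)$; moreover since every module in $\cF(P)$ has weights in $P$, its character lies in $R(P)$, giving $\Ch(\cF(P))\subseteq R(P)\cap\Ch(\fg)$.

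For the reverse inclusion, I would take an element $a\in R(P)\cap\Ch(\fg)$ and write $a=\res^{\fg}_{\ft}(x)$ for some $x\in\cK(\fg)$. Using the basis of~\ref{basisK}, write $x=\sum_{\lambda\in P^+(\Delta^+)}(k_\lambda+\xi n_\lambda)[L(\lambda)]$ (with $n_\lambda=0$ for $\lambda\in I_1$). The character map sends this to $\sum_\lambda (k_\lambda+\xi n_\lambda)\ch_\xi L(\lambda)$. Now I want to argue that only those $\lambda$ with $\lambda\in P$ can occur with nonzero coefficient. The key point is that $\ch_\xi L(\lambda)$ has $e^\lambda$ appearing with coefficient $\ch_\xi C_\lambda\neq 0$, and since $L(\lambda)$ is a highest weight module, $\lambda$ is a maximal element of $\supp(\ch_\xi L(\lambda))$. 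So if $\lambda_0$ is a maximal element (with respect to the standard partial order) among $\{\lambda:\ k_\lambda+\xi n_\lambda\neq 0\}$, then $e^{\lambda_0}$ appears in $a=\res^{\fg}_{\ft}(x)$ with nonzero coefficient — by the argument in the proof of~\Cor{cor0res}, $\lambda_0$ is maximal in $\supp(a)$. Since $a\in R(P)$ and $P$ is closed under $\mathbb{Z}\Delta$-shifts (in particular $WP=P$ and $P$ is a union of cosets), $\supp(a)\subseteq P$, so $\lambda_0\in P$.

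The induction then proceeds: having pinned down $\lambda_0\in P$, note that $\ch_\xi L(\lambda_0)\in R(P)$ because $\lambda_0\in P$ and $P$ is $\mathbb{Z}\Delta$-invariant, hence $[L(\lambda_0)]\in\cK(\cF(P))$; subtract $(k_{\lambda_0}+\xi n_{\lambda_0})\ch_\xi L(\lambda_0)$ from $a$ to get $a'=\res^{\fg}_{\ft}(x')$ with $x'=\sum_{\lambda\neq\lambda_0}(k_\lambda+\xi n_\lambda)[L(\lambda)]$, still in $R(P)\cap\Ch(\fg)$, and with strictly smaller ``support'' among the $P^+(\Delta^+)$-labels in the partial-order sense. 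One must be slightly careful that the relevant set of maximal labels is finite or at least that a noetherian-type descent applies — here I would invoke that $x$ is a finite $\mathbb{Z}[\xi]$-combination to begin with, so the set $\{\lambda:\ k_\lambda+\xi n_\lambda\neq 0\}$ is finite and the descent terminates. This shows $a\in\res^{\fg}_{\ft}(\cK(\cF(P)))=\Ch(\cF(P))$, completing the equality. The ``In particular'' clause is then immediate: if $\Ch(\fg)=\cA(\fg)$, then $R(P)\cap\Ch(\fg)=R(P)\cap\cA(\fg)=\cA(P)$ by definition.

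I expect the main obstacle to be the bookkeeping in the descent argument — specifically making precise that a maximal label $\lambda_0$ among the nonzero coefficients really does survive as a maximal weight in $\supp(\res^{\fg}_{\ft}(x))$, given that lower $L(\lambda)$'s also contribute weights near $\lambda_0$. This is exactly the phenomenon handled in~\Cor{cor0res}, so I would cite that argument rather than redo it: a weight $\mu\in\supp(\ch_\xi L(\lambda))$ other than $\lambda$ itself satisfies $\mu<\lambda$, so a maximal $\lambda_0$ cannot be cancelled by contributions from other basis elements. The only genuinely new input beyond~\Cor{cor0res} is the observation that $\lambda_0\in\supp(a)\subseteq P$ forces $\lambda_0\in P$, which uses nothing more than $R(P)$ being spanned by $\ch_\xi C_\nu$ with $\nu\in P$ together with $P$ being closed under $\mathbb{Z}\Delta$.
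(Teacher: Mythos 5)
Your proof is correct and takes essentially the same route as the paper: expand $a$ in the $\ch_{\xi} L(\lambda)$ basis, observe that a maximal label must appear in $\supp(a)\subset P$ (so cannot be cancelled, by the Corollary~\ref{cor0res} argument), conclude $\ch_{\xi} L(\lambda_0)\in R(P)$, subtract, and descend by finiteness. The paper's write-up is more compact but uses the exact same induction on the size of the support; the only cosmetic difference is that you work with $[L(\lambda)]\in\cK(\fg)$ and then push to characters, whereas the paper writes $a$ directly as a $\mathbb{Z}[\xi]$-combination of $\ch_{\xi} L(\lambda_i)$.
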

\begin{proof}
The inclusion $\subset$ is straightforward. For the inverse inclusion 
take $a\in R(P)\cap \Ch(\fg)$. 
Write $a=\sum_{i=1}^s m_i\ch_{\xi} L(\lambda_i)$  where 
$m_i\in\mathbb{Z}[\xi]$.
 If $\lambda_s$ is maximal
among $\lambda_1,\ldots,\lambda_s$, then
$\lambda_s\in\supp(a)$, so $\lambda_s\in P$ and
$\ch_{\xi} L(\lambda_s)\subset R(P)$. Hence
 $a':=\sum_{i=1}^{s-1}  m_i\ch_{\xi} L(\lambda_i)$ lies in  $R(P)\cap \Ch(\fg)$.
Using the induction on $s$ we get $a'\in \Ch(\cF(P))$, so $a\in \Ch(\cF(P))$ as required.  
\end{proof}

\subsubsection{Remark}
Using the same agrument for $\ch L(\lambda_i)$ we obtain
$$\Ch_{+}(\cF(P))=R(P)\cap \Ch_{+}(\fg).$$ 
For $\Ch_-$ a similar argument works if 
$\fh=\ft$  or if $\cA_-(\fg)$ admits a short basis defined in~\ref{short}.

\subsubsection{Examples}\label{Pint}
Let $\fg$ be one of the algebras $\fgl(m|n),\osp(m|n),\fq_n$.
Let $P_{int}$ be the integral lattice which is the lattice spanned by the weights
of the standard representation. By~\Lem{lemchP} the formula 
$\Ch(\fg)=\cA(\fg)$ implies $\Ch(\cF(P_{int}))=R(P_{int})\cap\cA(\fg)$.

Let $\fg:=\fq_n$ and $P$ be the set of ``half-integral weights''. The formula $\Ch(\fg)=\cA(\fg)$ implies $\Ch(\cF(P))=R(P)\cap\cA(\fg)$.

\subsubsection{}
Similarly to~\Cor{corA+-Ch+-} we have
\begin{equation}\label{corAp+-}
\cA(P)=\cA_+(P)\underset{R(P_0)}{\times}\cA_-(P)\ \ \ \ 
 \Ch(P)=\Ch_+(P)\underset{R(P_0)}{\times}\Ch_-(P).\end{equation}

\subsubsection{}\label{A+-iso}
Let $\fg$ be such that $\fh=\ft$ and that
for some group homomorphism $p:\mathbb{Z}\Delta\to\mathbb{Z}_2$ 
one has $p(\Delta_{\ol{i}})=\ol{i}$. Assume, in addition, that
$P$ is a subgroup of $P_0$ which contains $\mathbb{Z}\Delta$ and  that
$p$ can be extended to the  
group homomorphism $p:\mathbb{Z}\Delta\to\mathbb{Z}_2$.
In this case $\mathbb{Z}[P]=R_{\pm}(P)$ admits an automorphism 
$\iota: e^{\nu}\mapsto (-1)^{p(\nu)} e^{\nu}$. We claim that 
this automorphism induces ring isomorphisms
$$\cA_-(P)\iso \cA_+(P),\ \ \Ch_-(P)\iso\Ch_+(P).$$
Indeed, since $\iota$ is an  automorphism, it is enough to check that
$\iota(\cA_-(P))=\cA_+(P)$ and $\iota(\Ch_-(P))=\Ch_+(P)$.
The first formula immediately follows from~(\ref{Apm}).
For the second formula note that the condition $\fh=\ft$ implies that
 $\Ch_{\pm}(P)$
are free $\mathbb{Z}$-modules with  bases 
$\{\ch L(\lambda)\}_{\lambda\in P\cap P^+(\Delta^+)}$ 
and $\{\sch L(\lambda)\}_{\lambda\in P\cap P^+(\Delta^+)}$ respectively.
One readily sees $\iota(\ch L(\lambda))=\sch L(\lambda)$;
this gives the required formula.

\subsubsection{}
The above assumptions hold, for example, if $\fg$ is a finite-dimensional 
Kac-Moody  superalgebra and $P=\mathbb{Z}\Delta$ or if
$\fg=\fgl(m|n),\osp(m|n),\fp_n$ and $P=P_{int}$ (see~\ref{Pint}).

\subsection{Sergeev-Veselov description  of $\cA_{\pm}(\fg)$}\label{SVform}
Fix $\beta\in\Delta_{iso}$. Let  $\ft=\ft'\times\ft''$ be a decomposition with
$\dim \ft''=2$,  $\beta\in (\ft'')^*$ and
$ (\ft')^*\subset\{\mu|\ 
\langle\mu,h_{\beta}\rangle=0\}$. 
Fix  $\omega\in (\ft'')^*$ satisfying $\langle\omega,h_{\beta}\rangle\not=0$.
Since $\langle\beta,h_{\beta}\rangle=0$, the elements
 $\omega,\beta$ form a basis of  $(\ft'')^*$.

We set $x:=e^{\omega}$, $y:=e^{\beta}$ and 
write $f\in \mathbb{Z}[t^*]$ in the form
$$f=\sum_{a,b\in\mathbb{C}} f_{a,b} x^a y^b$$
where $f_{a,b}\in \mathbb{Z}[(\ft')^*]$.

\subsubsection{}
\begin{lem}{}
Assume that 
$\Delta_{iso}=W\beta\cup (-W\beta)$. Then
\begin{equation}\label{SVpartial}
\cA_{\pm}(\fg)=
\{f\in R_{\pm}(\fg)^W\ |\ 
\frac{\partial f}{\partial x}\in \mathbb{Z}[\ft^*](y\pm 1)\}.
\end{equation}
\end{lem}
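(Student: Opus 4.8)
The statement is a reformulation of the defining condition for $\cA_{\pm}(\fg)$ from~(\ref{Apm}) in terms of a partial derivative, under the transitivity hypothesis $\Delta_{iso}=W\beta\cup(-W\beta)$, which (by the Remark after~\Cor{corA+-Ch+-}) lets us replace the intersection over all isotropic roots by a single condition for the fixed $\beta$. So the first move is: by that Remark (or directly from~(\ref{Apm}) together with $W$-invariance and $\Delta_{iso}=W\beta\cup(-W\beta)$), we have
\begin{equation*}
\cA_{\pm}(\fg)=\Bigl\{f\in R_{\pm}(\fg)^W\ \Bigm|\ \langle\nu,h_{\beta}\rangle\neq 0\ \Longrightarrow\ \sum_{i\in\mathbb{Z}}(\mp 1)^i\,m_{\nu+i\beta}=0\Bigr\},
\end{equation*}
where $f=\sum_\nu m_\nu e^\nu$. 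It therefore suffices, for $f\in R_{\pm}(\fg)^W$, to show that the condition ``$\sum_i(\mp 1)^i m_{\nu+i\beta}=0$ whenever $\langle\nu,h_\beta\rangle\neq 0$'' is equivalent to ``$\partial f/\partial x\in\mathbb{Z}[\ft^*](y\pm 1)$''.

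**Translating into the $x,y$ coordinates.** Write $f=\sum_{a,b}f_{a,b}x^ay^b$ with $f_{a,b}\in\mathbb{Z}[(\ft')^*]$, using $x=e^\omega$, $y=e^\beta$. Since $\langle\beta,h_\beta\rangle=0$ and $\langle\omega,h_\beta\rangle\neq0$, the condition $\langle\nu,h_\beta\rangle\neq 0$ on a weight $\nu=\nu'+a\omega+b\beta$ (with $\nu'\in(\ft')^*$) depends only on $a$: it holds precisely when $a\neq 0$ (here I use that $(\ft')^*$ annihilates $h_\beta$, so $\langle\nu,h_\beta\rangle=a\langle\omega,h_\beta\rangle$). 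The monomials $e^{\nu+i\beta}$ for $i\in\mathbb{Z}$ are exactly the monomials with a fixed $\mathbb{Z}[(\ft')^*]$-part, fixed $x$-exponent $a$, and varying $y$-exponent. Hence grouping terms, the condition $\sum_i(\mp1)^i m_{\nu+i\beta}=0$ for all $\nu$ with $a\neq0$ becomes: for every $a\neq 0$ and every fixed $(\ft')^*$-component, $\sum_b(\mp1)^b f_{a,b}=0$, i.e. the Laurent polynomial $g_a(y):=\sum_b f_{a,b}y^b$ satisfies $g_a(\pm1)=0$, equivalently $g_a(y)$ is divisible by $(y\mp1)$ in $\mathbb{Z}[(\ft')^*][y,y^{-1}]$. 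Wait — one must be careful about the sign: evaluating $y\mapsto\pm1$ should match the $(\mp1)^i$ in~(\ref{Apm}). Since in~(\ref{Apm}) the sign attached to a shift by $i\beta$ is $(\mp1)^i$ for $\cA_\pm$, and the coefficient of $y^b$ picks up $(\mp1)^b$, the vanishing condition is $g_a(\mp1)=0$, i.e. $(y\pm1)\mid g_a(y)$. That is exactly the claimed form, since $\partial f/\partial x=\sum_{a\neq0}a\,x^{a-1}\bigl(\sum_b f_{a,b}y^b\bigr)=\sum_{a\neq0}a\,x^{a-1}g_a(y)$, and (as $a\in\mathbb{C}^\times$, or at least $a\neq0$) this lies in $\mathbb{Z}[\ft^*](y\pm1)$ iff each $g_a(y)$ is divisible by $(y\pm1)$. [If the exponents $a$ range over a torsion-free group one should note $a\neq0\Rightarrow a$ invertible after passing to $\mathbb{Q}$; but divisibility of $\sum a x^{a-1}g_a$ by $(y\pm1)$ in $\mathbb{Z}[\ft^*]$ is still equivalent to divisibility of each $g_a$ by $(y\pm1)$ because the $x^{a-1}$ are linearly independent over $\mathbb{Z}[(\ft')^*][y^{\pm1}]$ and $a\neq 0$ as an integer-or-rational coefficient does not introduce zero divisors.]

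**The main obstacle.** The genuinely delicate point is the bookkeeping of signs and of which weights satisfy $\langle\nu,h_\beta\rangle\neq0$: one must verify cleanly that this condition is ``$x$-exponent nonzero'' and not something depending on $y$ or on $(\ft')^*$, and that the alternating sum $\sum_i(\mp1)^i m_{\nu+i\beta}$ corresponds under $\sum_b f_{a,b}y^b\mapsto$ evaluation to the value at $y=\mp1$ with the correct sign for $\cA_+$ versus $\cA_-$. The rest (the equivalence ``$\partial f/\partial x\in\mathbb{Z}[\ft^*](y\pm1)$'' $\Leftrightarrow$ ``each $g_a$ divisible by $y\pm1$'' $\Leftrightarrow$ ``$g_a(\mp1)=0$'') is a routine fact about Laurent polynomials in $y$ over the ring $\mathbb{Z}[(\ft')^*]$ (note $\mathbb{Z}[(\ft')^*]$ is a domain, so $y\pm1$ is not a zero divisor), and the passage from the intersection over $\Delta_{iso}$ to a single $\beta$ is already granted by the Remark after~\Cor{corA+-Ch+-}. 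I would also record explicitly that $R_\pm(\fg)^W$ is exactly the set of $f$ we are intersecting with, so both sides of~(\ref{SVpartial}) are subsets of that, making the equivalence a statement purely about the derivative condition.
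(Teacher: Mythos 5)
Your proof is correct and follows essentially the same route as the paper: expand $f$ in the coordinates $x=e^{\omega}$, $y=e^{\beta}$, observe that $\langle\nu,h_{\beta}\rangle\neq 0$ is exactly the condition that the $x$-exponent is nonzero, and translate the alternating sum condition in~(\ref{Apm}) into divisibility of each $y$-Laurent polynomial $g_a$ (hence of $\partial f/\partial x$) by $y\pm 1$. Your extra care with the sign bookkeeping ($g_a(\mp 1)=0$ versus $(y\pm 1)\mid g_a$) and the invocation of the transitivity remark to reduce to a single $\beta$ are both appropriate and match what the paper does implicitly.
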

\begin{proof}
Take  $f\in \mathbb{Z}[\ft^*]$ and write
$f=\sum m_{\nu} e^{\nu}=\sum_{a,b\in\mathbb{C}} f_{a,b} x^a y^b$,
 where 
$$f_{a,b}=\sum_{\nu'\in (\ft')^*} m_{\nu'+a\omega+b\beta} e^{\nu'}.$$
 Then
$\frac{\partial f}{\partial x}=\sum_{a,b\in\mathbb{C}}a f_{a,b} x^{a-1} y^b$.
The condition $\frac{\partial f}{\partial x}\in \mathbb{Z}[\ft^*](y-1)$ is equivalent to
$$\forall b\ \ \ \ \sum_i  a  f_{a,b+i} x^{a-1}=0$$
that is for each $a,b$ one has
$$0=a\sum_i  f_{a,b+i}=
a\sum_{\nu'\in (\ft')^*}  m_{\nu'+a\omega+b\beta} e^{\nu'}.$$
Since $\langle\nu'+a\omega,h_{\beta}\rangle\not=0$ if and only if $a=0$ we get
$$\{f\in\mathbb{Z}[\ft^*]|\ 
\frac{\partial f}{\partial x}\in \mathbb{Z}[\ft^*](y-1)\}=
\{\sum_{\nu} m_{\nu} e^{\nu}|\ \langle\nu,h_{\beta}\rangle\not=0
\ \Longrightarrow\  \sum_i  m_{\nu+i\beta}=0\}.$$
Similarly
$$\{f\in\mathbb{Z}[\ft^*]|\ 
\frac{\partial f}{\partial x}\in \mathbb{Z}[\ft^*](y+1)\}=
\{\sum_{\nu} m_{\nu} e^{\nu}|\ \langle\nu,h_{\beta}\rangle\not=0
\ \Longrightarrow\  \sum_i  (-1)^i m_{\nu+i\beta}=0\}.$$
\end{proof}

\subsubsection{Example}
Take $\fg=\fgl(m|n)$, $\beta:=\delta_n-\vareps_m$ and $\omega:=\vareps_m$.
Since $\fh=\ft$ we have $R(\fg)=\mathbb{Z}[P_0]$ and~(\ref{SVpartial}) gives
$$\cA_{\pm}(\fg)=
\{f\in \mathbb{Z}[P_0]^W\ |\ 
\frac{\partial f}{\partial x}\in \mathbb{Z}[\ft^*](y\pm 1)\}
$$
where $x:=e^{\vareps_m}$ and $y:=e^{\delta_n-\vareps_m}$. 
Set $x_m:=e^{\vareps_m}=x$ and
$y_n:=e^{\delta_n}=xy$. Then
$$\frac{\partial f}{\partial x}=\frac{\partial f}{\partial x_m}+\frac{y_n}{x_m}
\frac{\partial f}{\partial y_n}.$$
Since $x_m$ is invertible in $\mathbb{Z}[\ft^*]$,
 the condition $\frac{\partial f}{\partial x}\in \mathbb{Z}[\ft^*](y-1)$
can be rewritten as 
$$x_m\frac{\partial f}{\partial x_m}+y_n
\frac{\partial f}{\partial y_n}\in \mathbb{Z}[\ft^*](x_m-y_n)$$
which is the Sergeev-Veselov condition for $\cA_-(\fgl(m|n))$.

\subsubsection{Hoyt-Reif description  of $\cA_-(P)$}\label{ev-1}
Assume that $P\subset (\ft')^*+\mathbb{Z}\beta+\mathbb{Z}\omega$. 
Then 
$$\mathbb{Z}[P;\xi]\subset \mathbb{Z}[(\ft')^*;\xi]\otimes \mathbb{Z}[x^{\pm 1},y^{\pm 1}].$$
For each $c\in\mathbb{C}^*$ we denote by $\ev_z$ the evaluation map
$\ev_c:\mathbb{Z}[P;\xi]\to \mathbb{C}[(\ft')^*;\xi]$ given by
$x\mapsto c$ and $y\mapsto -1$.

\subsubsection{}
\begin{cor}{corApev}
Assume that $\Delta_{iso}=W\beta\cup (-W\beta)$.
If $P\subset (\ft')^*+\mathbb{Z}\beta+\mathbb{Z}\omega$, then
$\cA_-(P)=\{f\in R[P]^W|\ \ev_{c}(f)\ \text{ does not depend on } c\}.
$
\end{cor}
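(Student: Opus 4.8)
The plan is to read off \Cor{corApev} from the partial‑derivative description \eqref{SVpartial}, the real content being a short Laurent‑polynomial computation in $x=e^{\omega}$ and $y=e^{\beta}$.

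First I would record the concrete shape of $\cA_-(P)$. Since $P$ is $W$‑stable and each line $\nu+\mathbb{Z}\beta$ lies entirely inside or entirely outside $P$ (because $\beta\in\mathbb{Z}\Delta$ and $P+\mathbb{Z}\Delta\subseteq P$), writing an element $a\in\cA(\fg)$ whose $\psi_-$‑image is supported on $P$ as its ``$P$‑part'' plus the rest shows that each summand again lies in $\cA(\fg)$ (the $W$‑invariance passes to each part, and the defining linear relations of $\cA(\fg)$ each involve only the coefficients along a single line $\nu+\mathbb{Z}\beta$); hence $\cA_-(P)=\cA_-(\fg)\cap R_-(P)$, cf.\ \eqref{corAp+-}. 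As $R_-(P)\subseteq R_-(\fg)$ is $W$‑stable, \eqref{SVpartial} then gives
\[
\cA_-(P)=\{f\in R_-(P)^W \mid \partial f/\partial x\in\mathbb{Z}[\ft^*](y-1)\}.
\]
The hypothesis $P\subseteq(\ft')^*+\mathbb{Z}\beta+\mathbb{Z}\omega$ is exactly what makes every $f\in R_-(P)$ an honest Laurent polynomial $f=\sum_{a,b\in\mathbb{Z}}f_{a,b}\,x^ay^b$ with $f_{a,b}\in\mathbb{Z}[(\ft')^*]$, so that both the divisibility condition above and the map $\ev_c$ of~\ref{ev-1} make literal sense.

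Next I would translate the condition on $\partial f/\partial x$ into the statement about $\ev_c$. Let $\varepsilon\in\{\pm1\}$ be the scalar to which $\ev_c$ sends $y$, and set $g_a:=\sum_b\varepsilon^b f_{a,b}\in\mathbb{Z}[(\ft')^*]$, so that $\ev_c(f)=\sum_a c^a g_a$. Because the functions $c\mapsto c^a$ ($a\in\mathbb{Z}$) are linearly independent over $\mathbb{Z}[(\ft')^*]$, $\ev_c(f)$ is independent of $c$ if and only if $g_a=0$ for all $a\neq0$. On the other hand $\partial f/\partial x=\sum_{a,b}a\,f_{a,b}\,x^{a-1}y^b$, and since $y-\varepsilon$ and $y$ are coprime in $\mathbb{Z}[(\ft')^*][y]$, divisibility by $y-\varepsilon$ in $\mathbb{Z}[\ft^*]$ is equivalent to vanishing under $y\mapsto\varepsilon$, i.e.\ to $\sum_a a\,g_a\,x^{a-1}=0$, i.e.\ again to $g_a=0$ for all $a\neq0$. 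Thus ``$\ev_c(f)$ is independent of $c$'' and ``$\partial f/\partial x\in\mathbb{Z}[\ft^*](y-\varepsilon)$'' cut out the same subset of $R_-(P)^W$, and combining this with the first paragraph yields $\cA_-(P)=\{f\in R_-(P)^W \mid \ev_c(f)\text{ does not depend on }c\}$, as claimed.

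The one point that genuinely needs care is the sign: one must check that the factor $y-1$ coming out of \eqref{SVpartial} on the $\cA_-$‑side is precisely the factor $y-\varepsilon$ whose vanishing is seen by $\ev_c$ — equivalently, that the normalisations of $\Pi$ and $\psi_-$, the $\fsl(1|1)$‑computation of~\ref{gl11}, and the value fixed for $y$ in~\ref{ev-1} are all mutually compatible. Granting this, the remaining ingredients — the localisation identity $\cA_-(P)=\cA_-(\fg)\cap R_-(P)$ and the elementary manipulation with Laurent polynomials — are routine, and I expect no further obstacle.
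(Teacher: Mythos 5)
Your argument is sound and, in substance, is the paper's argument run through one intermediate reformulation; the differences are worth recording. The paper's own proof works directly from the defining relation of $\cA_-(P)$: it takes $f=\sum m_{\nu}e^{\nu}$, sets $n_{\nu';j}:=\sum_i m_{\nu'+j\omega-i\beta}$, observes $\ev_c(f)=\sum_{\nu',j}n_{\nu';j}\,e^{\nu'}c^j$, and notes that the condition ``$\sum_i m_{\nu+i\beta}=0$ whenever $\langle\nu,h_{\beta}\rangle\neq0$'' says precisely $n_{\nu';j}=0$ for $j\neq0$, i.e.\ that $\ev_c(f)$ is constant in $c$. You instead pass through the divisibility criterion~\eqref{SVpartial} and then unwind it back to the coefficient computation (your $g_a$ are exactly the paper's $n_{\nu';j}$). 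Since~\eqref{SVpartial} was itself obtained by this same coefficient manipulation, the detour adds a round trip but no new ideas; it is harmless. The one genuine addition in your write-up is the explicit justification of the localisation $\cA_-(P)=\cA_-(\fg)\cap R_-(P)$ by splitting a $\psi_-$-preimage in $\cA(\fg)$ into its $P$-part and complement (using $W$-stability of $P$ and the fact that each line $\nu+\mathbb{Z}\beta$ lies entirely in or out of $P$); the paper uses this reduction without comment, and it is worth spelling out.

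On the sign you flagged: the discrepancy is real, and it lies in the paper rather than in any normalisation you failed to track. With $\ev_c$ as literally defined in~\ref{ev-1} (sending $y\mapsto -1$), the condition ``$\ev_c(f)$ constant in $c$'' translates to $\sum_i(-1)^i m_{\nu+i\beta}=0$ for $\langle\nu,h_{\beta}\rangle\neq0$, equivalently $\partial f/\partial x\in\mathbb{Z}[\ft^*](y+1)$, which by~\eqref{SVpartial} characterises $\cA_+(P)$, not $\cA_-(P)$. The paper's own proof of the corollary tacitly evaluates at $y=1$ (its $n_{\nu';j}$ carries no alternating sign), so either~\ref{ev-1} should read $y\mapsto 1$, or the corollary should be stated for $\cA_+(P)$. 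Your argument, instantiated with $\varepsilon=1$, is the version that matches the claim as stated.
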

\begin{proof}
Take  $f=\sum m_{\nu} e^{\nu}\in \mathbb{Z}[P]$.
For each $\nu'\in (\ft')^*$
and $j\in\mathbb{Z}$    set
$$n_{\nu';c}:=\sum_{i}  m_{\nu'+j\omega-i\beta}.$$
The condition $\sum_{i\in\mathbb{Z}} 
 m_{\nu+i\beta}=0$ for $\langle\nu,h_{\beta}\rangle\not=0$ means that
$n_{\nu';j}=0$ if $j\not=0$.
Since $\ev_{c}(f)=\sum_{\nu',j}n_{\nu';j} c^j$,  
the above condition holds if and only if
$\ev_{c}(f)$ does not depend on $c$.
\end{proof}

\subsection{Example: $\cA(\mathfrak{p}_n)$}\label{pn}
In this section we will deduce the formula $\cA(\mathfrak{p}_n)=\Ch(\mathfrak{p}_n)$
form the results of~\cite{IRS}.
It would be interesting to obtain this formula by the method used in Section~\ref{sect1}.
\subsubsection{}
Set $\fg:=\fp_n$. One has 
$\fg_{\ol{0}}=\fgl_n$ and $\fh=\ft$, so $R(P_0)=\mathbb{Z}[P_0;\xi]$. We have
$$\Delta_{\ol{0}}^+=\{\vareps_i-\vareps_j\}_{1\leq i<j\leq n},\ \ \
\Delta_{iso}=\{\pm (\vareps_i+\vareps_j)\}_{1\leq i<j\leq n},\ \ \
\Delta_{\ol{1}}=\{2\vareps_i\}_{i=1}^n\coprod \Delta_{iso}.$$
The group $W=S_n$ acts on $\ft^*$ by permuting $\Delta^+$.
We set 
$$\str:=\sum_{i=1}^n\vareps_i,\ \  \  \beta:=\vareps_{n-1}+\vareps_n.$$
One has  $P_0=\mathbb{C}\str+ P_{int}$.
Since $\Delta_{iso}=W (\beta)\coprod W (-\beta)$
we have
$$\cA(\fg)=\{\sum_{\nu} m_{\nu} e^{\nu}\in \mathbb{Z}[P_0;\xi]^W\ |\   \langle\nu,h_{\beta}\rangle\not=0\ \Longrightarrow\ 
 \sum_{i\in\mathbb{Z}} 
(-\xi)^i m_{\nu+i\beta}=0\}.$$

\subsubsection{}
\begin{lem}{lemP}
Denote by $\mathbb{Z}[\mathbb{C}\str]$  the group ring of $\mathbb{C}\str$.
Then
$$\Ch(\fg)=\mathbb{Z}[\mathbb{C}\str]\otimes \Ch(P_{int}),\ \ \ \ \cA(\fg)=\mathbb{Z}[\mathbb{C}\str]\otimes \cA(P_{int}).$$
\end{lem}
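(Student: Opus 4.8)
The statement asserts that both the character ring $\Ch(\fp_n)$ and the combinatorial ring $\cA(\fp_n)$ factor as a tensor product over $\ZZ$ of the group ring of the one-dimensional subspace $\CC\str$ with the corresponding ring attached to the integral sublattice $P_{int}$. The plan is to prove the two factorizations separately but by the same mechanism: identify the grading of $\mathbb{Z}[P_0;\xi]^W$ coming from the decomposition $P_0=\CC\str\oplus P_{int}$ (here one uses that $\str$ is $W$-fixed, so $W$ acts only on the $P_{int}$-component, and that $\CC\str\cap\mathbb{Z}\Delta=0$, indeed $\str\notin\mathbb{Z}\Delta$), and then observe that each of $\Ch(\fg)$ and $\cA(\fg)$ is homogeneous for this grading and that its degree-$c\str$ piece is exactly $e^{c\str}$ times the degree-$0$ piece.

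\textbf{Step 1: the grading.} First I would write every $\nu\in P_0$ uniquely as $\nu=c\str+\mu$ with $c\in\CC$ and $\mu\in P_{int}$; this gives a $\CC$-grading $\mathbb{Z}[P_0;\xi]=\bigoplus_{c\in\CC} e^{c\str}\,\mathbb{Z}[P_{int};\xi]$, compatible with multiplication and $W$-invariant, since $W$ fixes $\str$ and preserves $P_{int}$. Thus $\mathbb{Z}[P_0;\xi]^W=\bigoplus_c e^{c\str}\,\mathbb{Z}[P_{int};\xi]^W$. Note also that, because $\beta=\vareps_{n-1}+\vareps_n\in P_{int}$ has $\str$-component zero, the shift $\nu\mapsto\nu+i\beta$ preserves the $\str$-degree $c$, and $\langle\nu,h_\beta\rangle$ depends only on the $P_{int}$-component of $\nu$ (as $\langle\str,h_\beta\rangle$ is a fixed constant — in fact here $(\str|\beta)\neq 0$ is irrelevant since we only ever test $\langle\nu,h_\beta\rangle\neq 0$ along a fixed coset $\nu+\CC\str$... more precisely: along each fixed $c$, the linear-relation condition in the definition of $\cA$ only mixes weights of the same $\str$-degree). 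Hence the defining conditions of $\cA(\fg)$ — $W$-invariance and the vanishing of $\sum_i(-\xi)^i m_{\nu+i\beta}$ — are each homogeneous of degree $0$ in the $\str$-grading, so $\cA(\fg)=\bigoplus_c e^{c\str}\,\cA_c$ where $\cA_c\subset\mathbb{Z}[P_{int};\xi]^W$ is cut out by the same conditions; and $\cA_c$ is independent of $c$ (multiplication by $e^{c\str}$ is an order- and $W$-isomorphism $\mathbb{Z}[P_{int};\xi]\to e^{c\str}\mathbb{Z}[P_{int};\xi]$ respecting the $h_\beta$-pairing), giving $\cA_c=\cA(P_{int})$. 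This proves the second identity.

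\textbf{Step 2: the ring $\Ch(\fg)$.} For the first identity I would argue that $\Ch(\fg)$ is likewise $\str$-homogeneous. Since $\fh=\ft$ here, $\Ch(\fg)$ is the $\ZZ$-span of $\ch_\xi L(\lambda)$, $\lambda\in P^+(\Delta^+)$; each such character lies in a single $\str$-degree, namely $c=c(\lambda)$ where $\lambda=c(\lambda)\str+(\text{integral part})$, because all weights of $L(\lambda)$ differ from $\lambda$ by elements of $\mathbb{Z}\Delta\subset P_{int}$. So $\Ch(\fg)=\bigoplus_c e^{c\str}\Ch(\fg)_c$ with $\Ch(\fg)_c$ the span of those $\ch_\xi L(\lambda)$ with $c(\lambda)=c$. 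It remains to see $\Ch(\fg)_c=e^{-c\str}\cdot\big(e^{c\str}\Ch(\fg)_c\big)$ equals $\Ch(P_{int})$ independently of $c$: tensoring $L(\lambda)$ with the one-dimensional $\fg$-module $\CC_{d\str}$ (on which $\ft$ acts by $d\str$ and $\fn$ acts by $0$ — this is a genuine $\fg$-module since $\str$ annihilates $h_\beta$ and $2\vareps_i$-root vectors appropriately, and in any case $\CC\str$ lands in the center of the character lattice) shifts $\lambda$ by $d\str$ and sends $P^+(\Delta^+)$ bijectively to itself, so multiplication by $e^{d\str}$ maps $\Ch(\fg)_c$ onto $\Ch(\fg)_{c+d}$. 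Taking $d=-c$ identifies every $\Ch(\fg)_c$ with $\Ch(\fg)_0=\Ch(P_{int})$, where the last equality is \Lem{lemchP} applied to $P=P_{int}$. This yields $\Ch(\fg)=\mathbb{Z}[\CC\str]\otimes\Ch(P_{int})$.

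\textbf{Main obstacle.} The one genuinely delicate point is Step 2's claim that tensoring with a one-dimensional module shifts the highest weight by $\str$ and preserves finite-dimensionality, i.e. that $e^{d\str}$ permutes $\{\ch_\xi L(\lambda):\lambda\in P^+(\Delta^+)\}$; for $\fp_n$ one must check that $\CC_{d\str}$ really is a $\fp_n$-module (equivalently that $d\str$ is a valid weight for a one-dimensional representation, which amounts to $d\str$ vanishing on $[\fg,\fg]\cap\ft$ — true because $\mathbb{Z}\Delta$ spans $P_{int}$ and $\str\notin\CC\mathbb{Z}\Delta$-direction), and that $\dim L(\lambda+d\str)=\dim L(\lambda)$. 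Once that structural fact is in hand, everything else is the bookkeeping of Step 1. An alternative route that sidesteps module constructions entirely: prove directly that $\Ch(\fg)$ is $\str$-homogeneous and that each homogeneous component $\Ch(\fg)_c$, pulled back to degree $0$, satisfies the hypotheses characterizing $R(P_{int})\cap\Ch(\fg)$ via \Lem{lemchP}, again using only that $W$ fixes $\str$ and that weights of finite-dimensional modules move within $\mathbb{Z}\Delta$-cosets.
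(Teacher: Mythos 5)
Your strategy --- split along the $\str$-direction for $\cA(\fg)$, and tensor with one-dimensional modules for $\Ch(\fg)$ --- is the same as the paper's, but Step 1 rests on a false claim. You assert that every $\nu\in P_0$ can be written \emph{uniquely} as $\nu=c\str+\mu$ with $c\in\mathbb{C}$ and $\mu\in P_{int}$, yielding a $\mathbb{C}$-grading $\mathbb{Z}[P_0;\xi]=\bigoplus_{c\in\mathbb{C}} e^{c\str}\,\mathbb{Z}[P_{int};\xi]$. This fails: for $\fp_n$ one has $\str=\sum_{i=1}^n\vareps_i\in P_{int}$, so the decomposition is only determined up to replacing $(c,\mu)$ by $(c-1,\mu+\str)$, and the asserted direct sum is not direct. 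Your parenthetical $\mathbb{C}\str\cap\mathbb{Z}\Delta=0$ does not rescue this and is in any case false for $n$ even (then $\str\in\mathbb{Z}\Delta$, since $\mathbb{Z}\Delta$ is the even-coordinate-sum sublattice of $P_{int}$); what matters is that $\mathbb{Z}\str\subset P_{int}$. The correct decomposition, which is what the paper's proof uses, is indexed by cosets: $\mathbb{Z}[P_0;\xi]=\bigoplus_{c\in\mathbb{C}/\mathbb{Z}} e^{c\str}\,\mathbb{Z}[P_{int};\xi]$.

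After fixing the index set, the rest of Step 1 does work, but one point you wave at must actually be checked: you need $\langle\str,h_\beta\rangle=0$ for $\beta\in\Delta_{iso}$. A direct computation in $\fp_n$ with $\beta=\vareps_{n-1}+\vareps_n$ shows $h_\beta$ is a multiple of $E_{n-1,n-1}-E_{n,n}$, on which $\str$ vanishes. If this pairing were nonzero, the set $\{\nu:\langle\nu,h_\beta\rangle\neq0\}$ would depend on the coset representative $c$, and the $\fsl(1|1)$-condition defining $\cA(\fg)$ would not slice cleanly along $\str$-cosets --- so this is essential, not ``irrelevant'' as you parenthesize. Your Step 2 (tensoring with the one-dimensional $\fg$-module of weight $d\str$, available because $\str$ kills $[\fg,\fg]\cap\ft$, then invoking \Lem{lemchP}) is exactly the paper's argument for the first identity and is sound, again once the $\mathbb{C}$-indexed sum is replaced by the $\mathbb{C}/\mathbb{Z}$-indexed one.
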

\begin{proof}
The first formula follows from the fact that
every finite-dimensional module $M$
can be obtained from  $M\in \cF(P_{int})$ by tensoring with
one dimensional  module $L(c\str)$.
For the second formula write $a\in \mathbb{Z}[P_0;\xi]$ in the form
$$a=\sum_{\nu} m_{\nu} e^{\nu}=
\sum_{c\in\mathbb{C}/\mathbb{Z}} e^{c\str}  a_c\ \text{ with } a_c\in \mathbb{Z}[P_{int},\xi]$$
One readily sees that $a\in\cA(\fg)$ if and only if  $a_c\in\cA(\fg)$ for all $c$.
This gives the second formula.
\end{proof}

\subsubsection{}
By~\cite{IRS} we have $\Ch_-(P_{int})=\cA_-(P_{int})$. Using~\ref{A+-iso},
 (\ref{corAp+-}) and~\Lem{lemP} we get
  $\Ch_+(P_{int})=\cA_+(P_{int})$, 
  $\Ch(P_{int})=\cA(P_{int})$ and, finally,
  $\Ch(\fg)=\cA(\fg)$.

\section{Appendix}
In this section we recall a description of the set of dominants weights
for a Kac-Moody superalgebra. The results of this section can be found in~\cite{CW}, \cite{GS},  \cite{KLie}, \cite{Mus} and other sources.

\subsection{General results}
Let $\fs$ be a Lie superalgebra 
 and let $V$ be a $\fs$-module $V$. 
 
 \subsubsection{Notation}
 For $a\in\fs$ we set
 $V^a:=\{v\in V|\ av=0\}$.
We say that $a$ acts {\em locally finitely} (resp., {\em locally nilpotently})
on $V$
if any $v\in V$ lies in a finite-dimensional $a$-invariant subspace of $V$
(resp., $a^sv=0$ for $s>>0$). 
We say that $V$ is {\em $\fs$-locally finite } if  any $v\in V$ 
lies in a finite-dimensional $\fs$-submodule of $V$.
The following important proposition holds for Lie algebras, but does not hold for Lie superalgebras.

\subsubsection{Proposition (\cite{Kbook}, Prop. 3.8)}
{\em Let $\fs$ be a finite-dimensional Lie algebra and $F_V\subset\fs$
be the set of elements in $\fs$ which act locally finitely on a $\fs$-module $V$.
If $F_V$ generates $\fs$ (as a Lie algebra), then

\begin{enumerate}
\item
$F_V$ spans $\fs$;

\item
 $V$ is $\fs$-locally finite.
\end{enumerate}}

\subsubsection{Example}
The Lie superalgebra $\fosp(2n|2n)$ admit a base $\Sigma$ consisting of isotropic roots and is generated by $e_{\pm\alpha}$ with $\alpha\in\Sigma$.
Since $\alpha$ is isotropic, $e_{\pm\alpha}^2=0$, so
each $e_{\pm\alpha}$ acts nilpotently on any 
 $\fosp(2n|2n)$-module. However, if $V$ is an infinite-dimensional 
simple module, $V$ is not locally finite.

\subsubsection{}
\begin{cor}{coragi}
Let $\fs$ be a finite-dimensional Lie superalgebra and $\ft\subset\fs_{\ol{0}}$
be a commutative subalgebra. Let $V$ be a simple $\fs$-module where $\ft$ acts diagonally. Set
$$F:=\{a\in\fs_{\ol{0}}|\  a\ \text{ is $\ad$-nilpotent and }\ V^a\not=0\}.$$
If $F+\ft$ generates $\ \fs_{\ol{0}}$, then $V$ is finite-dimensional.
\end{cor}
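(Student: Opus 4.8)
The plan is to restrict $V$ to the finite-dimensional Lie algebra $\fs_{\ol{0}}$, apply the Proposition of~\cite{Kbook} recalled above, and then upgrade $\fs_{\ol{0}}$-local finiteness to honest finite-dimensionality via the super PBW theorem. Let $F_V\subset\fs_{\ol{0}}$ denote the set of elements acting locally finitely on $V$. First I would show $F\cup\ft\subset F_V$; granting this, the Lie subalgebra of $\fs_{\ol{0}}$ generated by $F_V$ contains the one generated by $F\cup\ft$, which contains $F+\ft$ and hence equals $\fs_{\ol{0}}$ by hypothesis, so the Proposition yields that $V$ is $\fs_{\ol{0}}$-locally finite. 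The inclusion $\ft\subset F_V$ is immediate: each $t\in\ft$ acts diagonalizably on $V$, so every vector lies in a finite-dimensional $t$-stable subspace.

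The substantive step is $F\subset F_V$, i.e.\ showing that an $\ad$-nilpotent $a\in\fs_{\ol{0}}$ with $V^a\neq 0$ acts locally finitely on $V$. Here I would fix $0\neq v\in V^a$, use $V=\cU(\fs)v$ (simplicity of $V$), and invoke the identity, valid in the associative algebra $\cU(\fs)$ for all $u\in\cU(\fs)$ and $k\geq 0$,
\[
a^k(uv)=\bigl((\ad a)^k u\bigr)v.
\]
This follows by writing left multiplication by $a$ as $R_a+\ad a$ (with $R_a$ right multiplication), observing that $R_a$ and $\ad a$ commute, that $\ad a$ is a derivation of $\cU(\fs)$ annihilating $a$, and that $av=0$ kills all but the top term. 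Since $\ad a$ is nilpotent on $\fs$, it is nilpotent on each step $\cU(\fs)_{\leq n}$ of the PBW filtration (a derivation nilpotent on generators is nilpotent on products of bounded length); hence for $w=uv$ with $u\in\cU(\fs)_{\leq n}$ one has $a^kw=0$ for $k\gg 0$, so $a$ acts locally nilpotently, a fortiori locally finitely, on $V$.

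It then remains to deduce $\dim V<\infty$ from $\fs_{\ol{0}}$-local finiteness. For this I would use the super PBW theorem: $\cU(\fs)$ is free of finite rank $2^{\dim\fs_{\ol{1}}}$ as a left $\cU(\fs_{\ol{0}})$-module, on the PBW monomials $y_1,\dots,y_{2^{\dim\fs_{\ol{1}}}}$ in a fixed basis of $\fs_{\ol{1}}$. Fixing any nonzero $v\in V$, simplicity gives $V=\cU(\fs)v=\sum_j\cU(\fs_{\ol{0}})(y_jv)$; choosing for each $j$ a finite-dimensional $\fs_{\ol{0}}$-submodule $M_j\ni y_jv$, we get $\cU(\fs_{\ol{0}})(y_jv)\subset M_j$, whence $V\subset\sum_j M_j$ is finite-dimensional.

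The only nonformal ingredient is the middle step --- that an $\ad$-nilpotent even element $a$ with $V^a\neq 0$ acts locally finitely on $V$; the rest is routine bookkeeping around Kac's Proposition and the super PBW theorem.
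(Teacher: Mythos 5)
Your proof is correct and follows essentially the same route as the paper: show $F\cup\ft\subset F_V$ (with $F$ shown to act locally nilpotently via the identity $a^k(uv)=((\ad a)^k u)v$ and nilpotence of $\ad a$ on the PBW filtration), apply Kac's Proposition to conclude $\fs_{\ol{0}}$-local finiteness, and use that $\cU(\fs)$ is a finitely generated $\cU(\fs_{\ol{0}})$-module to upgrade to finite-dimensionality. You have simply spelled out the local-nilpotence and PBW bookkeeping that the paper leaves implicit.
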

\begin{proof}
Take  $a\in F$ and a non-zero $v\in V^a$. Since $a$
 acts locally nilpotently on $\cU(\fs)$, $a$ acts locally nilpotently on
$\cU(\fs)v$. Since $V$ is simple, $a$ acts locally nilpotently on $V$.
By (ii),  any cyclic  $\fs_{\ol{0}}$-submodule of $V$ is finite-dimensional.
By PBW Theorem, $V$ is finitely generated over $\fs_{\ol{0}}$;
hence $V$ is finite-dimensional.
\end{proof}

\subsection{Applications to Kac-Moody case}\label{KM}
From now on $\fg$ is a 
 finite-dimensional  Kac-Moody superalgebra or $\fgl(m|n)=\fsl(m|n)\times\mathbb{C}$. The algebra $\fg_{\ol{0}}$ is reductive.

 \subsubsection{Notation}
We retain notation of~\ref{tri}. In our case $\fh=\ft$
and $\fg$ admits a non-degenerate invariant bilinear form;
we denote by $(-|-)$ the corresponding form on $\ft^*$.  

Recall that 
$\Delta_{iso}$ stands for
the set of roots $\beta$ such that for some $e_{\pm}\in \fg_{\beta}$
the subalgebra spanned by $e_{\pm}, h_{\beta}:=[e_+,e_-]$ is isomorphic to
 $\fsl(1|1)$. One has 
$\Delta_{iso}=\{\beta\in\Delta|\ (\beta|\beta)=0\}$.

 In our  case 
 $\Delta_{\ol{0}}\cap\Delta_{iso}=\emptyset$.
Moreover, for each $\alpha\in\Delta_{\ol{0}}$
the algebra generated by $\fg_{\alpha},\fg_{-\alpha}$
is isomorphic to $\fsl_2$; we denote 
 by $\alpha^{\vee}$ the coroot
corresponding to $\alpha$.

 A set $\Sigma\subset\Delta^+$ is called a {\em base} if the elements
of  $\Sigma$ are linearly independent and $\Delta^{\pm}$ lies
in $\pm\mathbb{N}\Sigma$ (where $\mathbb{N}\Sigma$
is the set of non-negative integral linear combinations of $\Sigma$);
in this case we write $\Delta^+=\Delta^+(\Sigma)$.
For the superalgebras which we consider each set of positive roots admits
a base. We denote by $\cB$ the set of bases compatible with the fixed triangular
decomposition of $\fg_{\ol{0}}$, i.e., $\Sigma\in\cB$ if
$\Delta_{\ol{0}}^+\subset\Delta^+(\Sigma)$. 

For each $\Sigma\in\cB$ we set
$\Sigma_{iso}=\Sigma\cap \Delta_{iso}$.
 For each $\alpha\in\pi$  we  set
$$\cB_{\alpha}:=\{\Sigma\in \cB| \ 
\alpha\in\Sigma \text{ or }\frac{\alpha}{2}\in\Sigma\}.$$

\subsubsection{Odd reflections}\label{odd}
Given a subset of positive roots $\Delta^+$
(containing $\Delta_{\ol{0}}^+$)
and  $\beta\in\Sigma_{iso}$, we can construct a new subset of positive roots
(containing $\Delta_{\ol{0}}^+$) by an {\em odd reflection} $r_{\beta}$:
$$
r_{\beta}(\Delta^+)=(\Delta^+\setminus\{\beta\})\cup\{-\beta\}.$$

We denote by $r_{\beta}\Sigma$ the base of $r_{\beta}\Delta^+(\Sigma)$.

\subsubsection{Proposition~\cite{Sint}}\label{propS}
\begin{enumerate}
\item
Any two subsets of positive roots in $\Delta$ (containing $\Delta_{\ol{0}}^+$)
can be obtained from each other by a finite sequence of odd reflections.

\item The set $\cB_{\alpha}$ is non-empty for  each $\alpha\in\pi$.

\item $r_{\beta}\Sigma$
 contains $-\beta$, the roots $\alpha\in\Sigma\setminus\{\beta\}$,
which are orthogonal to $\beta$, and the roots $\alpha+\beta$ for $\alpha\in\Sigma$ which are not orthogonal to $\beta$. 
\end{enumerate}

\subsubsection{Highest weight modules}
 For any $\Sigma\in\mathcal B$ we denote by $L(\lambda;\Sigma)$ the irreducible module of highest weight $\lambda$ with respect to the Borel
subalgebra corresponding to $\Sigma$.
For an odd root
$\beta\in\Sigma$  one has
\begin{equation}\label{lambdalambda'}
 L(\lambda;\Sigma)=\left\{\begin{array}{ll} L(\lambda;r_{\beta}\Sigma) & \text{ if } \langle\lambda,h_{\beta}\rangle=0\\
L(\lambda-\beta;r_{\beta}\Sigma) &  \text{ if } \langle\lambda,h_{\beta}\rangle\not=0.
\end{array}\right.\end{equation}
In particular, if $L$ is a simple highest weight module with respect to
some $\Sigma\in\cB$, then $L$ is a simple highest weight module with respect to
any $\Sigma'\in\cB$. We denote by $\hwt_{\Sigma} (L)$ the highest weight of $L$
with respect to $\Sigma$.

\subsection{}
\begin{prop}{propi}
For a simple highest weight $\fg$-module $L$
the following conditions are equivalent
\begin{itemize}
\item[(a)]
$L$ is finite-dimensional;
\item[(b)]
for each $\alpha\in \pi$ there exists $\Sigma\in \cB_{\alpha}$ such that
$\hwt_{\Sigma} (L)\in P^+(\pi)$;
\item[(c)] for each $\alpha\in \pi$ there exists $\Sigma\in \cB_{\alpha}$ such that
$\langle \hwt_{\Sigma} (L),\alpha^{\vee}\rangle \in\mathbb{N}$.
\end{itemize}
\end{prop}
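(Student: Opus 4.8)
The plan is to prove the equivalence by the cycle $(a)\Rightarrow(b)\Rightarrow(c)\Rightarrow(a)$. The implications $(a)\Rightarrow(b)$ and $(b)\Rightarrow(c)$ are the routine directions, and the real content is $(c)\Rightarrow(a)$, which is exactly where \Cor{coragi} is designed to be applied.

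\emph{$(a)\Rightarrow(b)$.} Assume $L$ is finite-dimensional. Fix $\alpha\in\pi$ and pick any $\Sigma\in\cB_{\alpha}$ (non-empty by \Prop{propS}(ii)). With respect to this $\Sigma$, the module $L$ is a highest weight module; since $L$ is finite-dimensional, its highest weight $\hwt_{\Sigma}(L)$ is, in particular, a highest weight of a finite-dimensional $\fg_{\ol 0}$-module, hence lies in $P^+(\pi)$. (One uses here that $L$ restricted to $\fg_{\ol 0}$ is a finite-dimensional module whose set of weights is $\Delta_{\ol 0}^+$-saturated below $\hwt_{\Sigma}(L)$, so the latter must be $\fg_{\ol 0}$-dominant.)

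\emph{$(b)\Rightarrow(c)$.} Immediate: if $\hwt_{\Sigma}(L)\in P^+(\pi)$ and $\alpha\in\pi$, then $\langle\hwt_{\Sigma}(L),\alpha^{\vee}\rangle\in\NN$ by definition of $P^+(\pi)$.

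\emph{$(c)\Rightarrow(a)$.} This is the main step. I would apply \Cor{coragi} with $\fs=\fg$, $\ft$ the Cartan, and $V=L$. The set $F$ in that corollary is $\{a\in\fg_{\ol 0}\mid a\text{ is }\ad\text{-nilpotent and }L^a\neq 0\}$, and we must show $F+\ft$ generates $\fg_{\ol 0}$ — it suffices to produce, for each simple $\alpha\in\pi$, a nonzero $\ad$-nilpotent $e\in\fg_{\alpha}$ with $L^e\neq 0$, since then the $\fg_{\ol 0}$-triples attached to all $\alpha\in\pi$, together with $\ft$, generate $\fg_{\ol 0}$. Fix $\alpha\in\pi$ and take $\Sigma\in\cB_{\alpha}$ as in (c). Here one must split into the cases $\alpha\in\Sigma$ and $\tfrac{\alpha}{2}\in\Sigma$. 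In either case the $\fsl_2$ (or $\fosp(1|2)$) attached to $\alpha$ sits inside the Borel $\fb_{\Sigma}$, and $L$ is a highest weight module for $\fb_{\Sigma}$ with highest weight $\mu:=\hwt_{\Sigma}(L)$ satisfying $\langle\mu,\alpha^{\vee}\rangle\in\NN$. Standard $\fsl_2$-theory (together with the observation, via \eqref{lambdalambda'}, that passing between the various $\Sigma\in\cB_{\alpha}$ only shifts the highest weight by multiples of odd roots and does not affect the $\alpha^{\vee}$-value relevant here) shows that the $\alpha$-string through the highest weight vector is finite, so $e_{-\alpha}^{k+1}v_{\mu}=0$ where $k=\langle\mu,\alpha^{\vee}\rangle$; hence $e_{-\alpha}$ has a nonzero kernel on $L$, i.e. $L^{e_{-\alpha}}\neq 0$, and $e_{-\alpha}$ is $\ad$-nilpotent. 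Thus $e_{-\alpha}\in F$ for every $\alpha\in\pi$. Since $\{h_{\alpha}\}_{\alpha\in\pi}\subset\ft$ and $\fg_{\ol 0}$ is generated by $\ft$ together with the $e_{\pm\alpha}$, $\alpha\in\pi$ — and each $e_{\alpha}$ lies in the $\fsl_2$-subalgebra generated by $e_{-\alpha}$ and $[e_{-\alpha},\cdot]$, hence in the subalgebra generated by $F\cup\ft$ — we conclude $F+\ft$ generates $\fg_{\ol 0}$. \Cor{coragi} now gives that $L$ is finite-dimensional.

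\emph{Main obstacle.} The delicate point is the bookkeeping in $(c)\Rightarrow(a)$: condition (c) allows a \emph{different} base $\Sigma$ for each simple root $\alpha$, and one must be careful that the $\fsl_2$-string argument producing $e_{-\alpha}\in F$ genuinely uses the $\Sigma$ adapted to that particular $\alpha$, and that the two cases $\alpha\in\Sigma$ versus $\tfrac{\alpha}{2}\in\Sigma$ (the latter occurring for $\osp$-type algebras) are both handled — in the half-integral/$\fosp(1|2)$ case one uses that $\langle\mu,\alpha^{\vee}\rangle\in\NN$ still forces local nilpotency of $e_{-\alpha}$ on the $\fosp(1|2)$-submodule generated by $v_{\mu}$. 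Once one has a single $\ad$-nilpotent element with nonzero kernel attached to each node of $\pi$, the rest is soft.
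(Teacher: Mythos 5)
Your overall strategy matches the paper's: $(a)\Rightarrow(b)\Rightarrow(c)$ are routine and $(c)\Rightarrow(a)$ is proved via Corollary~\ref{coragi}. However, there is a genuine gap in the main step. You assert that ``standard $\fsl_2$-theory'' gives $e_{-\alpha}^{k+1}v_{\mu}=0$ with $k=\langle\mu,\alpha^{\vee}\rangle$. This does not follow from $\fsl_2$-theory alone: the cyclic $\fsl_2$-module $\cU(\fsl_2)v_{\mu}\subset L$ is a highest weight $\fsl_2$-module of highest weight $k$, which a priori could be a Verma module, in which case $e_{-\alpha}^{j}v_{\mu}\neq 0$ for all $j$. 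What forces the vector to vanish is the simplicity of $L$ as a $\fg$-module, and to invoke this one must first verify that $e_{-}^{k}v_{\mu}$ (here $e_{-}\in\fg_{-\alpha'}$ with $\alpha'=\alpha$ or $\alpha/2$ according to which lies in $\Sigma$) is a $\Sigma$-primitive vector of $L$: that $e_{+}e_{-}^{k}v_{\mu}=0$ is the $\fsl_2$- (resp.\ $\fosp(1|2)$-) computation, while for the other simple roots $\beta\in\Sigma\setminus\{\alpha'\}$ one uses that $\beta-\alpha'$ is not a root, hence $[\fg_{\beta},e_{-}]=0$ and $\fg_{\beta}(e_{-}^{k}v_{\mu})=e_{-}^{k}\fg_{\beta}v_{\mu}=0$. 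Only then does simplicity of $L$ give $e_{-}^{k}v_{\mu}=0$. Your parenthetical remark about moving between bases in $\cB_{\alpha}$ via~(\ref{lambdalambda'}) is orthogonal to this point and does not supply the missing argument.

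A second, smaller defect: you reduce to producing, for each $\alpha\in\pi$, an element of $F$ inside $\fg_{\alpha}$, and later claim $e_{\alpha}$ lies ``in the subalgebra generated by $F\cup\ft$'' because it sits in the $\fsl_2$ generated by $e_{-\alpha}$. Neither statement is right: one needs $F$ to meet both $\fg_{\alpha}$ and $\fg_{-\alpha}$, and the Lie subalgebra generated by $e_{-\alpha}$ together with $\ft$ does not contain $e_{\alpha}$. The correct (and trivial) fix, used by the paper, is to note that $\fg_{\alpha}\subset F$ directly because $L$ is a $\Sigma$-highest weight module and $e_{\alpha}$ kills the highest weight vector; then $\{e_{\pm\alpha}\}_{\alpha\in\pi}\cup\ft$ generates $\fg_{\ol{0}}$ and Corollary~\ref{coragi} applies.
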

\begin{proof}If $L$ is finite-dimensional, then for each base $\Sigma\in\cB$ one has
 $\hwt_{\Sigma} (L)\in P^+(\pi)$ 
and so
$\langle \hwt_{\Sigma} (L),\alpha^{\vee}\rangle \in\mathbb{N}$
for each $\alpha\in\pi$. This gives the implications
(a) $\Longrightarrow $ (b)  $\Longrightarrow $ (c).

For the implication (c) $\Longrightarrow $ (a) we
assume that for each $\alpha\in \pi$ there exists $\Sigma\in \cB_{\alpha}$ such that
$\langle \hwt_{\Sigma} (L),\alpha^{\vee}\rangle \in\mathbb{N}$. By~\Cor{coragi}, it is enough to show that the set
$$F:=\{a\in\fg_{\ol{0}}\ |\  a\ \text{ is $\ad$-nilpotent and }\ L^a\not=0\}.$$
contains
$\fg_{\pm\alpha}$ for each $\alpha\in\pi$. 
Fix $\alpha\in\pi$ and $\Sigma\in \cB_{\alpha}$ such that 
for $\nu:=\hwt_{\Sigma}(L)$ one has
$\langle \nu,\alpha^{\vee}\rangle \in\mathbb{N}$.  

Since $L$ is a highest weight module, $F$ contains $\fg_{\alpha}$.
Fix a highest weight  vector $v\in L_{\nu}$. 
Set $\alpha':=\alpha$ if $\alpha\in\Sigma$ and $\alpha':=\alpha/2$ if $\alpha/2\in\Sigma$.
Denote by $\fl$ the subalgebra generated by $\fg_{\pm\alpha'}$.
One has $\fl\cong \fsl_2$ if $\alpha'=\alpha$ and $\fl\cong \fosp(1|2)$ if $\alpha'=\alpha/2$; it is well-known that 
a simple highest weight $\fl$-module $L_{\fl}(\mu)$ is finite-dimensional
if and only if $\langle \mu,\alpha^{\vee}\rangle \in\mathbb{N}$. 
Fix non-zero $e_{\pm}\in\fg_{\alpha'}$.
Since  $\langle\nu, \alpha^{\vee}\rangle \in\mathbb{N}$ one has
$\dim L_{\fl}(\nu)<\infty$, so
$e_+ e_-^kv=0$ for some $k>0$.  For each $\beta\in\Sigma\setminus\{\alpha'\}$
one has $[\fg_{\beta},e_+]=0$, so  $\fg_{\beta}(e_-^kv)=0$.
Hence  $e_-^kv$ is a primitive vector. Since $L$ is simple, $e_-^kv=0$.
Since $\fg_{-\alpha}$ is spanned either by $e_-$  or by $e_-^2$, 
$\fg_{-\alpha}$ acts nilpotently on $v$, so  
 $\fg_{-\alpha}\subset F$ as required.
\end{proof}

\subsubsection{}
\begin{cor}{coro}
Let $\Sigma\in\cB$ be such that for each $\alpha\in\pi\setminus\Sigma$ one has  $r_{\beta}\Sigma\in \cB_{\alpha}$ for some $\beta\in\Sigma_{iso}$. Then
\begin{equation}\label{P+nicy}
P^+(\Delta^+)= \{\lambda\in P^+(\pi)|\ \forall \beta\in\Sigma\cap\Delta_{iso}\ \
\langle\lambda,h_{\beta}\rangle\not=0\ \Longrightarrow \lambda-\beta\in P^+(\pi)\}.\end{equation}
\end{cor}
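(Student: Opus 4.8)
The plan is to prove the two inclusions separately. The inclusion $P^+(\Delta^+)\subseteq\{\lambda\in P^+(\pi)\mid \forall\beta\in\Sigma_{iso},\ \langle\lambda,h_\beta\rangle\neq 0\Rightarrow\lambda-\beta\in P^+(\pi)\}$ is the easy direction: if $\lambda\in P^+(\Delta^+)$, then $L(\lambda)=L(\lambda;\Sigma)$ is finite-dimensional, so $\lambda=\hwt_\Sigma(L)\in P^+(\pi)$. Now fix $\beta\in\Sigma_{iso}$ with $\langle\lambda,h_\beta\rangle\neq 0$. By the odd-reflection formula (\ref{lambdalambda'}), $L=L(\lambda-\beta;r_\beta\Sigma)$, so $\hwt_{r_\beta\Sigma}(L)=\lambda-\beta$. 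Since $L$ is finite-dimensional, its highest weight with respect to \emph{any} base lies in $P^+(\pi)$, hence $\lambda-\beta\in P^+(\pi)$, as required.

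For the reverse inclusion, take $\lambda\in P^+(\pi)$ such that $\langle\lambda,h_\beta\rangle\neq 0\Rightarrow\lambda-\beta\in P^+(\pi)$ for all $\beta\in\Sigma_{iso}$; I want to show $L:=L(\lambda;\Sigma)$ is finite-dimensional, and for this I invoke criterion (c) of \Prop{propi}: for each $\alpha\in\pi$ I must exhibit $\Sigma'\in\cB_\alpha$ with $\langle\hwt_{\Sigma'}(L),\alpha^\vee\rangle\in\mathbb N$. If $\alpha\in\Sigma$ (equivalently $\Sigma\in\cB_\alpha$ itself), then $\hwt_\Sigma(L)=\lambda\in P^+(\pi)$ gives $\langle\lambda,\alpha^\vee\rangle\in\mathbb N$ and we are done. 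If $\alpha\in\pi\setminus\Sigma$, the hypothesis on $\Sigma$ provides $\beta\in\Sigma_{iso}$ with $r_\beta\Sigma\in\cB_\alpha$. I now compute $\hwt_{r_\beta\Sigma}(L)$ via (\ref{lambdalambda'}): it is $\lambda$ if $\langle\lambda,h_\beta\rangle=0$, and $\lambda-\beta$ if $\langle\lambda,h_\beta\rangle\neq 0$. In the first case $\hwt_{r_\beta\Sigma}(L)=\lambda\in P^+(\pi)$, so $\langle\cdot,\alpha^\vee\rangle\in\mathbb N$; in the second case the hypothesis gives $\lambda-\beta\in P^+(\pi)$, so again $\langle\lambda-\beta,\alpha^\vee\rangle\in\mathbb N$. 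Either way criterion (c) is verified for this $\alpha$, and since $\alpha\in\pi$ was arbitrary, $L$ is finite-dimensional, i.e. $\lambda\in P^+(\Delta^+)$.

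The only point that needs a little care — and what I expect to be the main (minor) obstacle — is the bookkeeping around the case $\alpha/2\in\Sigma$ versus $\alpha\in\Sigma$ in the definition of $\cB_\alpha$, and making sure the odd reflection $r_\beta$ genuinely lands in $\cB_\alpha$ as the hypothesis asserts (so that \Prop{propi}(c) applies with the correct coroot $\alpha^\vee$). Here one should note that $\cB_\alpha$ consists of bases containing $\alpha$ or $\alpha/2$, and \Prop{propi}(c) is stated precisely in terms of $\alpha^\vee$, so no separate argument is needed: the formula (\ref{lambdalambda'}) and membership $r_\beta\Sigma\in\cB_\alpha$ are exactly the two inputs of the proposition. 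Finally, the remark after \Lem{lem0} observes that (\ref{P+nicy}) implies condition (\ref{P+nice}), so this corollary supplies the hypothesis needed for \Cor{cormainstep0} and hence for \Cor{thmSV}; no further work is required to connect it to the main results.
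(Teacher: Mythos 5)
Your proof is correct and follows exactly the route the paper indicates (the paper simply notes that the assertion follows from Proposition~\ref{propi} and formula~(\ref{lambdalambda'}), and you have spelled that out). The only minor imprecision is the parenthetical ``equivalently $\Sigma\in\cB_\alpha$'': strictly speaking $\Sigma\in\cB_\alpha$ can also hold because $\alpha/2\in\Sigma$, but since your dichotomy is $\alpha\in\Sigma$ versus $\alpha\in\pi\setminus\Sigma$ and in the second case you invoke the hypothesis verbatim, this has no effect on the argument.
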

\begin{proof}
The assertion follows from~\Prop{propi} and~(\ref{lambdalambda'}).
\end{proof}

\subsubsection{}
\begin{cor}{cora}
If $\Sigma$ satisfies the property 
\begin{equation}\label{Pr2}
\text{ each $\alpha\in\pi$ is the sum of at most two elements in $\Sigma$}
\end{equation}
then  the assumption of~\Cor{coro} holds for $\Sigma$.\end{cor}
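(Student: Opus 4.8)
The plan is to verify the hypothesis of \Cor{coro} one simple root at a time. Fix $\alpha\in\pi\setminus\Sigma$; we must produce $\beta\in\Sigma_{iso}$ with $r_\beta\Sigma\in\cB_\alpha$. If $\alpha/2\in\Sigma$ then $\Sigma$ itself lies in $\cB_\alpha$ and there is nothing to check for this $\alpha$, so assume $\alpha/2\notin\Sigma$. Then by \eqref{Pr2} and $\alpha\notin\Sigma$ we may write $\alpha=\beta_1+\beta_2$ with $\beta_1,\beta_2\in\Sigma$, and necessarily $\beta_1\neq\beta_2$ (otherwise $\alpha/2=\beta_1\in\Sigma$).

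First I would pin down parities. Since $\alpha\in\pi\subset\Delta_{\ol{0}}$ is even, $\beta_1$ and $\beta_2$ have the same parity, and they cannot both be even: otherwise $\alpha$ would be a sum of two positive even roots, hence not a simple root of $\Delta_{\ol{0}}$, contradicting $\alpha\in\pi$. So $\beta_1,\beta_2$ are both odd. Next I would rule out that both are non-isotropic. If $\gamma\in\Sigma$ is odd with $(\gamma|\gamma)\neq 0$, then $2\gamma\in\Delta$, so $2\gamma\in\Delta_{\ol{0}}^+$; writing $2\gamma$ as a sum of two positive even roots and comparing $\Sigma$-expansions, which are unique by linear independence of $\Sigma$, forces both summands to equal $\gamma\notin\Delta_{\ol{0}}$, which is impossible, so $2\gamma\in\pi$. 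Hence if $\beta_1,\beta_2$ were both non-isotropic, then $2\beta_1$, $2\beta_2$ and $\alpha=\beta_1+\beta_2$ would be three pairwise distinct elements of $\pi$ satisfying $2\alpha=2\beta_1+2\beta_2$, contradicting the linear independence of $\pi$. Thus, relabelling if necessary, $\beta:=\beta_2\in\Sigma\cap\Delta_{iso}=\Sigma_{iso}$.

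Finally I would check that $(\beta_1|\beta)\neq 0$. Suppose $(\beta_1|\beta)=0$. By \Prop{propS}(iii) the base $r_\beta\Sigma$ then contains $\beta_1$ unchanged together with $-\beta$; on the other hand $\alpha=\beta_1+\beta\in\Delta^+$ and $\alpha\neq\beta$ (as $\beta_1\neq 0$), so $\alpha\in r_\beta\Delta^+$ and hence $\alpha\in\NN(r_\beta\Sigma)$, whereas the unique expansion $\alpha=1\cdot\beta_1+(-1)\cdot(-\beta)$ in the linearly independent set $r_\beta\Sigma$ has the negative coefficient $-1$ on $-\beta$, a contradiction. So $(\beta_1|\beta)\neq 0$, and then \Prop{propS}(iii) places $\beta_1+\beta=\alpha$ in $r_\beta\Sigma$, so $r_\beta\Sigma\in\cB_\alpha$, which is exactly the assumption of \Cor{coro}.

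I expect the only real obstacle to be the step excluding ``$\beta_1,\beta_2$ both odd non-isotropic'': this is the single place where one leans on the structure of $\fg$ --- that an odd root is non-isotropic precisely when its double is a root --- rather than on formal bookkeeping with bases. The degenerate configuration $\alpha=2\gamma$ with $\gamma\in\Sigma$ (which only occurs for $\fosp(2m+1|2n)$) is harmless here: either $\gamma=\alpha/2\in\Sigma$ already puts $\Sigma$ into $\cB_\alpha$, or, in the extreme case $\fosp(1|2n)$ with $\Delta_{iso}=\emptyset$, the conclusion \eqref{P+nicy} of \Cor{coro} is the trivial inclusion $P^+(\Delta^+)\subseteq P^+(\pi)$.
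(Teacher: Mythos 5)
Your proof is correct and follows essentially the same case analysis as the paper's: reduce to $\alpha=\beta_1+\beta_2$ with $\beta_1\neq\beta_2\in\Sigma$, observe both must be odd, rule out both being non-isotropic, and show $(\beta_1|\beta_2)\neq 0$ via Proposition 5.2.3(iii) so that an odd reflection lands in $\cB_\alpha$. Your treatment of the non-isotropic case is actually a welcome improvement over the paper's terse remark (which contains a sign typo, ``$2\beta-2\beta'=2\alpha$''): your observation that $2\beta_1,2\beta_2\in\pi$ and that $2\alpha=2\beta_1+2\beta_2$ then violates linear independence of $\pi$ makes the contradiction fully explicit, and your closing paragraph on the $\alpha=2\gamma$ configuration correctly locates the one place where the hypothesis of Corollary 5.4.1 must be read in the spirit of its proof rather than literally.
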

\begin{proof}
It is enough to check that
if $\alpha\in\pi$ is the sum of two different roots from $\Sigma$, then 
$\alpha\in r_{\beta}\Sigma$ for some $\beta\in\Sigma_{iso}$.
Write $\alpha=\beta+\beta'$ for $\beta\not=\beta'\in\Sigma$.
Since $\alpha\in\pi$, the roots $\beta$, $\beta'$ are odd.

Assume that $\beta\in\Delta_{iso}$. If $(\beta|\beta')\not=0$, then
$\alpha\in r_{\beta}\Sigma$ as required.
If $(\beta|\beta')=0$, then
$-\beta,\beta'\in r_{\beta}\Sigma$, so $\alpha\not\in\Delta^+(r_{\beta}\Sigma)$
which contradicts to $\Delta^+(r_{\beta}\Sigma)=\Delta^+(\Sigma)\setminus\{\beta\}\cup\{-\beta\}$.

If $\beta,\beta'\not\in\Delta_{iso}$ are  non-isotropic, then $2\beta,2\beta'\in\Delta_0^+$
(since $\beta,\beta'$ are odd)
and $2\beta-2\beta'=2\alpha$ which contradicts to $\alpha\in\pi$.
\end{proof}

\subsection{Dynkin diagrams}
The root systems corresponding to $\fg$ 
are called:
$A(m|n)$ (for $\fg=\fgl(m|n)$ or $\fg=\fsl(m|n)$ with $m\not=n$),
$B(m|n)$ (for $\mathfrak{osp}(2m+1|2n)$), $D(m|n)$ (for $\mathfrak{osp}(2m|2n)$),
$G(3)$ and $F(4)$. The root system of $D(2|1;a)$ coincides with the root system
$D(2|1)$. The root systems $D(1|n)$ is also called $C(n+1)$.
The root systems $A(m|n), D(1|n)$ are called {\em type I}
systems; the rest of roots systems are called {\em type II} systems.
The root system $\Delta$ is of type II if and only if $\fg_{\ol{0}}$ is semisimple.

The Dynkin diagram of $\Sigma\in\cB$ was introduced in~\cite{KLie}.
Except for $A(m|n)$ case there is a one-to-one correspondence between
$\cB$ and the sets of Dynkin diagram. For $A(m|n)$ case
the set $\cB$ admits an involution $-w_0$ and  $\Sigma$, $-w_0\Sigma$ have the same Dynkin diagram ($w_0$ is the longest element in the Weyl group).

\subsubsection{Mixed bases}
We say that a Dynkin diagram is {\em mixed} if 
this diagram contains 
\begin{itemize}
\item
as many ``$\bullet$''s as possible, 
\item as less ``$\circ$''s as possible,
\item
 as many cycles containing the edge $\otimes--\otimes$ as possible.
\end{itemize}
We say that a base $\Sigma\in\cB$ is  {\em mixed} if its Dynkin diagram is mixed. The mixed Dynkin  diagram is
unique; a mixed base is unique 
if $\Delta$ is not of type $A(m|n)$ or has type $A(n+1|n)$. For $A(m|n)$ with $m\not=n\pm 1$ there are
two mixed bases: $\Sigma$ and $-w_0\Sigma$.

For $A(n|n), A(n+1|n), B(n|n), D(n+1|n), D(n|n)$ the mixed bases consists
of odd roots.

The mixed bases satisfy  the property~(\ref{Pr2}). Moreover
for type II systems  the mixed bases are the only bases
satisfying~(\ref{Pr2}), see~\ref{pr23} below.

\subsection{Property~(\ref{Pr2})}
Consider a simplified version of the\label{pr23} Dynkin diagram: 
we assign to each simple root $\alpha$ a node
 $\circ$ if
$\alpha$ is even,  $\scriptstyle{\otimes}$ if $\alpha$ is isotropic
and  $\bullet$ if $\alpha$ is odd and non-isotropic; we connect 
two nodes $\alpha,\beta$ if $\alpha+\beta\in\Delta$ (it is easy to see 
that $\alpha+\beta\in\Delta$ if and only if
$(\alpha|\beta)\not=0$). Denote the diagram
by $\Dyn$ and denote by $\Dyn_{odd}$ the subdiagram which contains only
$\scriptstyle{\otimes}$, $\bullet$ and the edges between them.

\subsubsection{Remark}
For $F(4)$ there are two bases in $\cB$ which correspond to
the same simplified Dynkin diagram $\circ-\scriptstyle{\otimes}-\circ-\circ$.

\subsubsection{}
Below we list the simplified Dynkin diagrams for mixed bases.

For $A(m|n)$  the diagram is  $\scriptstyle{\otimes}-\otimes-\ldots\otimes-\circ-\dots-\circ$
with $m+n-1$ nodes where the number $\#(\circ)$ 
of $\circ$ nodes  is $0$ for  $m=n,n\pm 1$  and is $m-n-1$  if $m>n$.

For $B(m|n)$ the   diagram is
 $\circ-\dots-\circ-\scriptstyle{\otimes}-\otimes-\dots-\bullet$, 
where $\#(\circ)=|m-n|$ and the total number of nodes is $m+n$.
For $G(3)$ we have a similar simplified Dynkin diagram 
with three nodes and $\#(\circ)=1$.

For $D(1|n)=C(n+1)$ the   diagram contains $1+n$ nodes and takes the form
 $$\xymatrix{ &  & & & & \otimes\ar@{-}[d]\\
\circ\ar@{-}[r]&\circ\ar@{-}[r]&\dots\ar@{-}[r]&\circ\ar@{-}[r]&
\circ\ar@{-}[ur]\ar@{-}[dr] & \\
& & & & & \otimes\ar@{-}[u].}$$

For $D(m|n)$ with $m>1$ the   diagram contains $m+n$ nodes and takes the form
 $$\xymatrix{ &  & & & & & & \otimes\ar@{-}[d]\\
\circ\ar@{-}[r]&\circ\ar@{-}[r]&\dots\ar@{-}[r]&
\circ\ar@{-}[r]&\scriptstyle{\otimes}\ar@{-}[r]&\dots\ar@{-}[r]&\scriptstyle{\otimes}\ar@{-}[ur]\ar@{-}[dr] & \\
& & & & & & & \otimes\ar@{-}[u]}$$
where $\#(\circ)=0$  for  $m=n,n+1$, $\#(\circ)=m-n-1$ for $m>n$
and $\#(\circ)=n-m$ for $n>m$. For $F(4)$ we have a similar  diagram 
with four nodes and $\#(\circ)=1$.

\subsubsection{}
Let $\ell$ (resp., $\ell'$) be the cardinality of $\pi$ (resp., of $\Sigma$).

It is easy to see that for two odd simple roots $\beta,\beta'$
with $\beta+\beta'\in\Delta$ one has $\beta+\beta'\in\pi$.
As a result  the number of roots in $\pi$ which can be written as
$\beta+\beta'$ for 
$\beta,\beta'\in\Sigma$ is equal to the number of edges in $\Dyn_{odd}$
which we denote by $\#(odd\ edge)$
plus the number of $\bullet$s which we denote by $\#(\bullet)$.
Since $\ell'=\#(\circ)+\#(\bullet)+\#(\scriptstyle{\otimes})$ 
 the property~(\ref{Pr2}) can be rewritten as 
\begin{equation}\label{Pr3}
\ell'-\ell=\#({\scriptstyle{\otimes}})-\#(odd\ edge).\end{equation}

For type I we have $\ell=\ell'-1$ and $\#(\bullet)=0$. 
The formula~(\ref{Pr3}) means that 
the number of edges in $\Dyn_{odd}$ is less by one than the
number of vertices. The diagrams $\Dyn_{odd}$ do not have cycles,
so this condition holds if and only if  $\Dyn_{odd}$ is connected.
For $C(n+1)$ the diagram $\Dyn_{odd}$  is always connected and~(\ref{Pr3}) holds. 

For type II we have $\ell=\ell'$.

For $B(m|n)$ and $G(3)$ one has $\#(\bullet)\leq 1$ and
the diagrams $\Dyn_{odd}$ do not have cycles. 
Therefore~(\ref{Pr3})
holds if and only if   $\Dyn_{odd}$ is connected and $\#(\bullet)=1$.
The mixed Dynkin diagram is the only diagram satisfing these conditions.

For the remaining cases $D(m|n)$ and $F(4)$ one has
 $\Delta_1=\Delta_{iso}$, so $\#(\bullet)=0$. 
From the classification
it follows that $\Dyn$ may contain at most one cycle.
Therefore~(\ref{Pr3}) 
holds if and only if  $\Dyn_{odd}$ is connected and contains a cycle.
The mixed Dynkin diagram is the only diagram satisfing these conditions.

\subsubsection{Remark}\label{netolko}
If all odd roots are isotropic, then the assumption of~\Cor{coro}
is equivalent to~(\ref{Pr2}). 
For $B(m|n)$ the assumption of~\Cor{coro}
holds for the mixed diagram and for $\Sigma'$ corresponding to  $\circ-\dots-\circ-\scriptstyle{\otimes}-\otimes-\dots-\circ$, 
where $\#(\circ)=|m-n+1|$; for instance,  $B(1|1)$
admits two diagrams $\scriptstyle{\otimes}-\bullet$ and $\scriptstyle{\otimes}-\circ$; the formula~(\ref{P+nicy})
hold in both cases.
For $G(3)$ the assumption of~\Cor{coro}
holds for the mixed diagram and for 
  the cyclic diagram.

\end{document}